 \renewcommand{\-}{\ensuremath{\text{--}}}
\let\oldmarginpar\marginpar
\renewcommand\marginpar[1]{\-\oldmarginpar[\raggedleft\footnotesize #1]%
{\raggedright\footnotesize #1}}
\newtheorem{theorem}{Theorem}
\newtheorem{lemma}[theorem]{Lemma}
\renewenvironment{proof}{\noindent{\it Proof}:}{\hfill $\blacksquare$}
\newtheorem{corollary}[theorem]{Corollary}
\DeclareMathOperator{\lcm}{lcm}
\begin{document}

\title{On the Hamilton-Waterloo Problem with a single factor of 6-cycles}

\author{Zazil Santizo Huerta and Melissa Keranen\\
Michigan Technological University\\
Department of Mathematical Sciences\\
Houghton, MI 49931, USA\\
}

\maketitle

 \begin{abstract}
The uniform Hamilton-Waterloo Problem (HWP) asks for a resolvable $(C_M, C_N)$-decomp-\\osition of $K_v$ into $\alpha$ $C_M$-factors and $\beta$ $C_N$-factors. We denote a solution to the uniform Hamilton Hamilton-Waterloo problem by $\hbox{HWP}(v; M, N; \alpha, \beta)$. Our research concentrates on addressing some of the remaining unresolved cases, which pose a significant challenge to generalize. We place a particular emphasis on instances where the $\gcd(M,N)=\{2, 3\}$,  with a specific focus on the parameter $M=6$. We introduce  modifications to some known structures, and develop new approaches to resolving these outstanding challenges in the construction of uniform $2$-factorizations. This innovative method not only extends the scope of solved cases, but also contributes to a deeper understanding of the complexity involved in solving the Hamilton-Waterloo Problem.

 \end{abstract}

\section{Introduction}
 
The Oberwolfach problem, proposed by Ringel in 1967, was posed during a graph theory meeting held in the German city of the same name. The goal of this problem is to seat 
$v$ conference attendees at $t$ round tables over $\frac{v-1}{2}$ 
nights such that each attendee sits next to each other attendee exactly once. 
It is clear that $v$ must be odd in order for $\frac{v-1}{2}$ to be an integer, and the total 
number of seats around the tables must be $v$. A variant of this problem, called the 
spouse avoiding variant, considers an even number of attendees over $\frac{v-2}{2}$ nights. 
In this situation, each attendee sits next to each other attendee, except their spouse, exactly once.

\vspace{5mm}

One of the most studied extensions of the Oberwolfach problem, is the Hamilton-Waterloo problem. This problem uses the same idea, but the conference is held at two different venues, Hamilton and Waterloo. The attendees spend $\alpha$ nights in Hamilton, with a particular table arrangement, and $\beta$ nights in Waterloo, 
with a different table arrangement. It is not necessary that the tables at a particular venue all have the same size, but if they do, then we say it is the uniform case of the problem.

\

If we translate the uniform case of this problem to graph theory, then a solution is equivalent to finding a decomposition of the complete graph $K_v$ (or $K_v-F$ when $v$ is even) into $\alpha$
2-factors where each 2-factor consists entirely of $M$-cycles (a $C_{M}$-factor), and $\beta$ 2-factors where each 2-factor consists entirely of $N$-cycles (a $C_{N}$-factor). We will denote the Hamilton-Waterloo Problem by $\hbox{HWP}(v;M,N;\alpha,\beta)$ and will refer to a solution of $\hbox{HWP}(v;M,N;\alpha,\beta)$ as a resolvable $(C_{M},C_{N})$-decomposition of $K_{v}$ into $\alpha$ $C_{M}$-factors and $\beta$ $C_{N}$-factors. When $\alpha=0$ or $\beta=0$, then it is actually the uniform Oberwolfach problem, which has been solved.

\begin{theorem}\label{fac}
\cite{AL, ALST}
    Let $v,M \geq 3$ be integers. There is a $C_M$-factorization of $K_v$ (or ${K_v}-F$ when $v$ is even) if and only if $M|v$; except that there is no $C_3$-factorization of ${K_6}-F$ or ${K_{12}}-F$.
\end{theorem}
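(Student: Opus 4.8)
The plan is to separate the statement into the necessity of the conditions, the verification of the two exceptional orders, and the (substantially harder) sufficiency. Necessity is a quick counting argument. Every 2-factor of a $C_M$-factorization partitions the $v$ vertices into vertex-disjoint $M$-cycles, so $v$ must be a multiple of $M$, giving $M \mid v$. The choice of host graph is forced by parity: $K_v$ is $(v-1)$-regular, so when $v$ is odd it decomposes into $(v-1)/2$ 2-factors, whereas when $v$ is even $v-1$ is odd and no 2-factorization of $K_v$ itself can exist; one must first delete a 1-factor $F$, leaving the $(v-2)$-regular graph $K_v - F$, which can carry $(v-2)/2$ 2-factors. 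This explains the appearance of $K_v$ versus $K_v - F$ according to the parity of $v$.

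Next I would isolate the case $M=3$, where the exceptions live. A $C_3$-factorization of $K_v$ with $v$ odd and $3\mid v$ (so $v\equiv 3\pmod 6$) is precisely a Kirkman triple system, and these exist for all such $v$; a $C_3$-factorization of $K_v-F$ with $v$ even and $3\mid v$ (so $v\equiv 0\pmod 6$) is a nearly Kirkman triple system. To establish the two claimed non-existences I would argue by a short finite analysis: for $v=6$, the leave $K_6-F$ is $4$-regular and cannot be partitioned into resolvable triangle-factors; for $v=12$ a bounded case check (equivalently, the known non-existence of a nearly Kirkman triple system of order $12$) rules it out. These finitely many orders are settled by direct combinatorial exhaustion rather than by the general machinery.

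For sufficiency in general I would treat odd $M$ and even $M$ separately, since their ingredient constructions differ, using in both a two-layer scheme. The first layer produces a finite library of small factorizations directly by the method of differences: one places the vertices on a cyclic group $\mathbb{Z}_v$ (or $\mathbb{Z}_{v-1}\cup\{\infty\}$ in the even, $1$-rotational setting), selects one or a few base 2-factors built from $M$-cycles whose edge-differences exactly tile the required difference classes, and lets the group orbit generate the remaining factors. The second layer propagates these seeds to all admissible orders by recursive constructions built on group divisible designs and frames: one inflates a small resolvable $C_M$-factorization or a $C_M$-factored frame, fills each group with a smaller ingredient factorization, and reassembles the parallel classes so that every cycle remains of length exactly $M$.

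The main obstacle is the sufficiency, and within it the bookkeeping that guarantees every admissible $v$ is reached by the recursion from the finite base library while preserving uniform cycle length $M$ at each filling step. A handful of small orders typically resist the recursion and must be handled by ad hoc direct constructions or computer search, so the delicate heart of the proof is verifying that the only orders that genuinely fail are $K_6-F$ and $K_{12}-F$ at $M=3$, and that no further exceptions are hiding among the small cases.
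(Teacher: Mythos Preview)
The paper does not prove this theorem at all: it is stated with citations to \cite{AL, ALST} and used as a black box throughout (for instance in Lemma~\ref{$gcd(3,N), t=1$} and Theorem~\ref{gcd=3}). There is therefore no ``paper's own proof'' to compare against; the authors simply quote the result as known.

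Your outline is a faithful high-level summary of how the cited references actually establish the theorem: necessity by parity and divisibility counting, the $M=3$ case via Kirkman and nearly Kirkman triple systems with the two sporadic non-existences handled by finite case analysis, and sufficiency for general $M$ via difference-method base factors over $\mathbb{Z}_v$ or $\mathbb{Z}_{v-1}\cup\{\infty\}$ combined with recursive GDD/frame inflations. That said, what you have written is a \emph{plan}, not a proof: the real work in \cite{ALST} is the explicit construction of the base 2-factors for each residue class of $v$ modulo $2M$, the verification that the recursive constructions cover every admissible order, and the disposal of the small cases that escape the recursion. Your sketch acknowledges these obstacles but does not carry any of them out, so as a standalone proof it is incomplete. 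If the intent is merely to indicate why the theorem is plausible and to point to the literature, your write-up is appropriate; if the intent is to supply an independent proof, substantial detail is still missing.
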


\

The following theorem summarizes the necessary conditions for the existence of a solution to $\hbox{HWP}(v;M,N;\alpha,\beta)$.

\begin{theorem}
\label{Ncond}
\cite{ABBE} In order for a solution of  $\hbox{HWP}(v;M,N;\alpha,\beta)$ to exist, we must have:
 \begin{enumerate}
  \item If $v$ is odd, $\alpha+\beta=\frac{v-1}{2}$,
  \item If $v$ is even, $\alpha+\beta=\frac{v-2}{2}$,
  \item If $\alpha>0$, $M|v$,
  \item If $\beta>0$, $N|v$.
 \end{enumerate}
\end{theorem}

\
Most results on this problem have been obtained by focusing on the parity conditions of $M$ and $N$, the divisibility conditions of $M$ and $N$, or by fixing $M$ or $N$. Recently, Burgess, et.al. have made progress and have given the following result, which summarizes the current state of the problem.

\begin{theorem}
    \cite{BUR}
    \label{Burgess}
     Let $v$, $M$ and $N$ be integers greater than 3, and let $l =
lcm(M, N)$. A solution to $\hbox{HWP}(v; M, N; \alpha, \beta)$ exists if and only if $M | v$
and $N | v$, except possibly when
\begin{itemize}
    \item $\gcd(M, N) \in\{1, 2\}$;
    \item 4 does not divide $v/l$;
    \item $v/4l \in \{1, 2\}$;
    \item $v = 16l$ and $\gcd(M, N)$ is odd;
    \item $v = 24l$ and $\gcd(M, N) = 3$.
\end{itemize}
\end{theorem}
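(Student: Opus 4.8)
The necessity of the conditions $M \mid v$ and $N \mid v$ is already contained in Theorem~\ref{Ncond} (items 3 and 4, valid when $\a,\b>0$, which is the genuinely mixed regime), so the whole content of the statement lies in the sufficiency direction. My plan is to fix $d=\gcd(M,N)$ and $l=\lcm(M,N)$, observe that ``$M\mid v$ and $N\mid v$'' is equivalent to $l\mid v$, and write $v=lt$. Since $M/d$ and $N/d$ are coprime while every admissible cycle length is a multiple of $d$, the arithmetic of the problem is controlled almost entirely by $d$ and by the factor $t=v/l$; this is exactly why the exceptional list is phrased in terms of $\gcd(M,N)$ and $v/l$. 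I would therefore organize the argument as a case analysis on the residue of $t$ modulo $4$ and on the size of $t$, isolating the small ratios $v/l\in\{4,8\}$ and the boundary values $v\in\{16l,24l\}$ for separate, direct treatment.

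The main engine would be a blow-up/recursive construction. The idea is to start from a resolvable ingredient on few points and inflate each point into a group of size $l$, decomposing $K_v$ as the union of $t$ copies of $K_{l}$ (the groups) together with the complete equipartite graph $K_{t\times l}$ (the between-group edges). Concretely, I would: (i) assemble base $(C_M,C_N)$-factorizations on small numbers of groups using difference methods under a suitable $\Z_v$-action (or a product-group action adapted to the group partition); (ii) invoke the Oberwolfach-type existence result of Theorem~\ref{fac} to supply the ``pure'' $C_M$- and $C_N$-factorizations needed on the groups $K_l$ and on complete multipartite subgraphs, which is legitimate here because $M,N>3$ avoids the forbidden $C_3$-obstructions; and (iii) glue everything together through a resolvable group divisible design (an \RGDD, or a Kirkman-type frame) of type $l^{t}$, so that resolvability of the ambient structure is inherited by the assembled factorization. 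The delicate bookkeeping is to arrange that the construction contributes \emph{exactly} $\a$ many $C_M$-factors and $\b$ many $C_N$-factors, rather than merely some admissible pair.

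That bookkeeping is where I expect the principal obstacle to lie. Realizing the full range of ratios $(\a,\b)$ subject to the fixed sum from Theorem~\ref{Ncond} is the crux: the extreme ratios (near $\a=0$ or $\b=0$) are close to the solved Oberwolfach problem and are comparatively easy, but the intermediate ratios must be produced by \emph{interpolation} — typically by exhibiting a small collection of ``exchangeable'' factor packages that can independently be set to $C_M$- or $C_N$-type and then combined additively. I would build such exchangeable packages on $K_{4l}$, which is precisely why $4\mid v/l$ is the generic hypothesis, and propagate them through the recursion so that each exchange shifts the count by a controlled amount. The residual difficulty is the collection of small and boundary configurations ($4\nmid t$, $v/l\in\{4,8\}$, and $v\in\{16l,24l\}$) that fall outside the reach of the generic recursion; these I expect to demand ad hoc direct constructions and difference families analyzed by hand or by computer search, and it is exactly the configurations that resist all of these that are recorded as the possible exceptions in the statement.
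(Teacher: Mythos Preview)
This theorem is quoted from \cite{BUR} and is \emph{not} proved in the present paper; it is invoked only as a known background result in the proof of Theorem~\ref{Main}. Consequently there is no ``paper's own proof'' against which to measure your proposal.

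That said, your outline is a plausible high-level caricature of the strategy in \cite{BUR}: the reduction to $v=lt$, the recursive blow-up through equipartite graphs, and the need for exchangeable ingredients to sweep out all admissible pairs $(\alpha,\beta)$ are all in the right spirit. Note, in particular, that the present paper itself remarks that the row-sum matrix machinery it modifies was introduced in \cite{BUR}, which is consonant with your ``difference methods under a suitable $\Z_v$-action'' line. However, your sketch remains genuinely schematic: the actual work in \cite{BUR} lies in the explicit construction of row-sum matrices over appropriate groups and in the delicate handling of the small-$t$ and small-$\gcd$ cases, none of which you have supplied. If your goal was to reconstruct a proof, you would need to consult \cite{BUR} directly; if your goal was to compare with the paper under review, there is nothing to compare.
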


As we get closer to obtaining a complete solution to this problem, the standard techniques that have been used to date do not apply in straightforward way. Therefore, new approaches need to be developed. The techniques used in this manuscript were developed for $M=6$ with the hope of future generalization to all $M$, which would further close the gap on the open cases for this problem.

In section 4, we introduce the modified row sum matrix which produces a significant advancement to easily keeping track of the edges used in our decompositions. It also allow us to develop techniques for combining the inside edges and cross differences in our decompositions when considering the complete graph as the union of complete graphs and an equipartite graph.

In section 6, we employ specific techniques in a unique way. We consider  the Walecki decomposition of a complete graph, then make 4-cycle switches and 6-cycle switches to find desired decompositions when $\gcd(M,N)= 2$. The technique of weighting is used heavily in the case when $gcd(6,N)= 3$.

\
There are different strategies for tackling the open cases for the Hamilton-Waterloo problem at this time. One strategy is to focus on the notoriously difficult case when there is a single factor of a specific type. Thus, our aim in this work is to find solutions when $\alpha=1$. We restrict our single factor to be a uniform factor of 6-cycles. Thus, the problem is to find solutions to $\hbox{HWP}(v;6,N;1,\beta)$. With a focus on $M=6$, we solve some many of the open cases listed in Theorem \ref{Burgess}. In particular, we solve the problem when $\gcd(6,N)=2$ and $\alpha=1$; and when $\gcd(6,N)=3$, $v=24l$, and $\alpha=1$, except for the case when $N=2x$ and $x=3$. Our main Theorem, listed here, is proved at the end of section 3. 

\begin{theorem}
\label{Main}
    Let $x$, $t$, and $p$ be integers such that $t \geq 1$, $x \geq 2$ and $x \neq 6p$. A solution to $\hbox{HWP}(v;6,N;1,\beta)$, where $v=6xt$, $N=2x$, and $\beta=3xt-2$, exists if and only if $6|v$ and $N|v$, except possibly when $\gcd(6,N)=1$, or $\gcd(6,N)=2$ and $x=3$.
\end{theorem}

\section{Preliminary Results}

If $G$ is a graph with $n$ vertices, the difference of an edge $\{u,v\}$ in $G$ with $u <v $ is defined to be $v-u$ or $n-(v-u)$, whichever is smaller. For any subset $D\subseteq \{1,2, \dots, \lfloor g/2 \rfloor\}$, $G(D,g)$ is defined to be the graph with vertex set
$$V(G(D,g))=\{0,1,\dots,g-1\}$$
and edge set consisting of all edges having a difference in $D$, that is, the edge set is defined as
$$E(G(D,g))=\{\{u,v\}| D(u,v)\in D\}$$
where $D(u,v)= \hbox{min}\{v-u,g-(v-u)\}$ and $u<v$. Then, we have the following lemma.

\begin{lemma}
\label{Stern-Lenz}
\cite{LR}
If $h$ is the $\gcd$ of $D\cup \{g\}$, then $G(D,g)$ consists of $h$ components, each of which is isomorphic to $G(\{\frac{d}{h} |d\in D\},\frac{g}{h})$.
\end{lemma}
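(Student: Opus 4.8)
The plan is to recognize $G(D,g)$ as a Cayley graph on the cyclic group $\Z_g$ and to use the structure of its subgroups. Observe that two vertices $u,v$ are joined by an edge precisely when $v-u \equiv \pm d \pmod{g}$ for some $d\in D$; thus $G(D,g)$ is the Cayley graph of $\Z_g$ with connection set $\pm D$. Since every Cayley graph is vertex-transitive, all of its connected components are isomorphic and have the same number of vertices, so it suffices to analyze the component containing the vertex $0$ and then count.

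The first main step is to identify the component of $0$ as a subgroup. I would argue that a vertex $w$ is reachable from $0$ by a path in $G(D,g)$ if and only if $w$ can be written as a $\Z$-linear combination $\sum_i c_i d_i \pmod g$ with $d_i\in D$; consequently the vertex set of the component containing $0$ is exactly the subgroup $H$ of $\Z_g$ generated by $D$. The second step is to compute this subgroup: I would invoke the elementary fact that the subgroup of $\Z_g$ generated by a set of residues $D$ equals the subgroup generated by $\gcd(D\cup\{g\})=h$, that is, $H=\langle h\rangle=\{0,h,2h,\dots,g-h\}$. Since $|H|=g/h$, every component has $g/h$ vertices, and therefore the number of components is $g/(g/h)=h$, which gives the count claimed in the lemma.

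The final step is to exhibit the isomorphism. I would consider the map $\varphi\colon \langle h\rangle \to \Z_{g/h}$ defined by $\varphi(x)=x/h$; this is a bijection from the vertex set of the component of $0$ onto $\{0,1,\dots,\frac{g}{h}-1\}$. An edge of difference $d$ in the component corresponds to a pair $x,y$ with $y-x\equiv\pm d\pmod g$, and applying $\varphi$ sends this to a pair with $\varphi(y)-\varphi(x)\equiv\pm\frac{d}{h}\pmod{g/h}$; hence $\varphi$ carries the component onto $G(\{\tfrac{d}{h}\mid d\in D\},\tfrac{g}{h})$ edge-for-edge, and by vertex-transitivity the same holds for every component. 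I expect the principal technical care to lie in the second step, namely justifying cleanly that the subgroup generated by $D$ inside $\Z_g$ coincides with $\langle h\rangle$; once this number-theoretic identification is in place, the counting and the isomorphism follow directly.
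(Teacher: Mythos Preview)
Your argument is correct. Note, however, that the paper does not supply its own proof of this lemma: it is quoted from Lindner and Rodger \cite{LR} and stated without proof, so there is no in-paper argument to compare against. The approach you give---recognizing $G(D,g)$ as the Cayley graph $\mathrm{Cay}(\Z_g,\pm D)$, identifying the component of $0$ with the subgroup $\langle D\rangle=\langle h\rangle$ of order $g/h$, and then transporting via $x\mapsto x/h$---is exactly the standard proof one finds in the literature, and all steps are sound. The only place that warrants a sentence of care is checking that $\{d/h:d\in D\}\subseteq\{1,\dots,\lfloor (g/h)/2\rfloor\}$ so that the target graph is well-defined under the paper's conventions; this follows since $h\mid d$ and $d\le\lfloor g/2\rfloor$.
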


\begin{corollary} 
\label{Stern-Lenz2}
\cite{LR}
$G(\{d\},g)$ consists of $h=\gcd(\{d,g\})$ components, and each component is
\begin{enumerate}
\item{a $(\frac{g}{h})$-cycle if $d\neq \frac{g}{2}$}, and
\item{$K_2$ if $d=\frac{g}{2}$}.
\end{enumerate}
\end{corollary}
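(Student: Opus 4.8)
The plan is to apply Lemma~\ref{Stern-Lenz} directly to the singleton difference set $D = \{d\}$ and then analyze the common structure of the resulting components. First I would set $h = \gcd(\{d, g\})$ and invoke Lemma~\ref{Stern-Lenz}, which immediately tells us that $G(\{d\}, g)$ splits into exactly $h$ components, each isomorphic to $H := G(\{d/h\}, g/h)$. Writing $d' = d/h$ and $g' = g/h$, the crucial observation is that $\gcd(d', g') = 1$, since we have divided out the full gcd; this reduces the entire problem to understanding a single graph $H$ on $g'$ vertices whose only difference $d'$ is coprime to $g'$.

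Next I would analyze $H = G(\{d'\}, g')$ by tracking the sequence of vertices $0, d', 2d', 3d', \dots$ taken modulo $g'$. Because $\gcd(d', g') = 1$, this sequence is a bijection that visits every one of the $g'$ vertices before first returning to $0$ after exactly $g'$ steps. Hence the edges obtained by repeatedly stepping by $d'$ form a closed walk through all vertices, so $H$ is connected. To finish I would determine whether this closed walk is a genuine cycle or degenerates, which is exactly where the two cases split.

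The case distinction hinges on whether $d' = g'/2$. If $d' \neq g'/2$, then for each vertex $u$ the two vertices $u + d'$ and $u - d'$ (mod $g'$) lying at difference $d'$ are distinct, so every vertex of $H$ has degree $2$; combined with connectedness this forces $H$ to be a single $g'$-cycle, that is, a $(g/h)$-cycle. If instead $d = g/2$, then $h = \gcd(\{g/2, g\}) = g/2$, so $g' = 2$ and $d' = 1 = g'/2$; here $u + d'$ and $u - d'$ coincide, every vertex has degree $1$, and $H = G(\{1\}, 2)$ is simply $K_2$. Substituting back, $G(\{d\},g)$ consists of $h$ copies of a $(g/h)$-cycle in the first case, and $h = g/2$ copies of $K_2$ in the second, as claimed.

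The step I expect to require the most care is the degenerate case $d' = g'/2$: one must notice that the circular definition of difference makes $u + d'$ and $u - d'$ the \emph{same} vertex, so the resulting object is not a $2$-cycle (a multi-edge) but the simple graph $K_2$, and one must verify that this situation arises precisely when $d = g/2$, so that the two bullets of the statement are mutually exclusive and jointly exhaustive.
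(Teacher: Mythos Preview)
Your argument is correct. The paper does not actually supply a proof of this corollary; it simply quotes the result from \cite{LR}, so there is no ``paper's proof'' to compare against beyond the obvious intended deduction from Lemma~\ref{Stern-Lenz}. Your route---apply Lemma~\ref{Stern-Lenz} to reduce to $G(\{d'\},g')$ with $\gcd(d',g')=1$, then observe that the orbit $0,d',2d',\dots$ exhausts $\mathbb{Z}_{g'}$ and distinguish the $2$-regular and $1$-regular cases---is exactly the natural one, and your handling of the equivalence between $d=g/2$ and $d'=g'/2$ (forced by $\gcd(d',g')=1$) is clean.
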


\

An \emph{equipartite graph} is a graph whose vertex set can be partitioned into $u$ subsets of size $h$ such that no two vertices from the same subset are connected by an edge.  The complete equipartite graph with $u$ subsets of size $h$ is denoted $K_{(h:u)}$, and it contains every edge between vertices of different subsets. The following result solves the Oberwolfach Problem for constant cycle lengths over complete equipartite graphs (as opposed to $K_v$).  That is to say, with finitely many exceptions, $K_{(h:u)}$ has a resolvable $C_m$-factorization.

\

\begin{theorem}
\label{equip}
\cite{L}  For $m\geq 3$ and $u\geq 2$, $K_{(h:u)}$ has a resolvable $C_m$-factorization if and only if $hu$ is divisible by $m$, $h(u-1)$ is even, $m$ is even if $u=2$, and $(h,u,m) \not \in \{(2,3,3), (6,3,3), \\ (2,6,3),(6,2,6)\}$.
\end{theorem}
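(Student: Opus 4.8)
I would prove the two directions separately, with necessity being routine and sufficiency carrying the weight. For necessity, observe that every vertex of $K_{(h:u)}$ is adjacent to exactly the $h(u-1)$ vertices outside its own part, so $K_{(h:u)}$ is $h(u-1)$-regular; since a $C_m$-factor is $2$-regular and spanning, any factorization has precisely $h(u-1)/2$ factors, forcing $h(u-1)$ to be even. Each factor is a disjoint union of $m$-cycles covering all $hu$ vertices, so $m \mid hu$. When $u=2$ the graph $K_{(h:2)}=K_{h,h}$ is bipartite and contains no odd cycle, so $m$ must be even. It then remains to show that the four triples $(2,3,3),(6,3,3),(2,6,3),(6,2,6)$, which satisfy all the arithmetic conditions, nonetheless admit no factorization; I would settle these by short counting or exhaustion arguments on the small graphs $K_{2,2,2}$, $K_{6,6,6}$, $K_{(2:6)}$ and $K_{6,6}$.

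The engine for sufficiency is to realize $K_{(h:u)}$ as the blow-up of $K_u$ by an independent set of size $h$: replacing each vertex of $K_u$ by $h$ vertices and each edge by a copy of $K_{h,h}$ produces exactly $K_{(h:u)}$, with the $h$-sets as its parts. Hence I can assemble a $C_m$-factorization of $K_{(h:u)}$ from a convenient edge-decomposition of $K_u$ by blowing up each piece and re-decomposing it into $C_m$-factors. The parity condition dictates a clean split. When $u$ is odd, I would take the Walecki decomposition of $K_u$ into $(u-1)/2$ Hamilton cycles and blow up each one; when $u$ is even (so $h$ must be even) I would use Walecki's decomposition of $K_u$ into $(u-2)/2$ Hamilton cycles together with a single perfect matching, and blow up each ingredient. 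A direct count confirms that blowing up a Hamilton cycle yields a $2h$-regular graph contributing $h$ factors, and blowing up the leftover matching contributes $h/2$ factors, so the totals $h(u-1)/2$ match exactly in both parities.

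The technical core is therefore a single gadget: decompose the blow-up of one cycle, $C_n[\overline{K_h}]$, into $C_m$-factors whenever $m \mid nh$. Applied to a blown-up Hamilton cycle we have $n=u$, so the required divisibility $m \mid uh$ is precisely the hypothesis $m \mid hu$. I would attack the gadget through the difference machinery of Section 2: viewing $C_n[\overline{K_h}]$ as the circulant $G(\{d : d \equiv \pm 1 \pmod n\},\, nh)$ on $\Z_{nh}$, Corollary \ref{Stern-Lenz2} and Lemma \ref{Stern-Lenz} report exactly how many cycles, and of what lengths, any chosen set of differences produces. Using this bookkeeping I would construct base $m$-cycles whose edge-differences, developed under the cyclic action on $\Z_{nh}$, sweep out the whole difference set and resolve into the desired $C_m$-factors, falling back on Theorem \ref{fac} for the degenerate ingredient cases.

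The main obstacle will be the boundary behaviour rather than the generic construction. The genuinely delicate point is the leftover perfect matching when $u$ is even: its blow-up is a disjoint union of copies of $K_{h,h}$, whose $C_m$-factorization requires $m \mid 2h$, a condition not implied by $m \mid hu$; when it fails I would have to merge the blown-up matching with an adjacent blown-up Hamilton cycle and factor the combined graph as a unit, and getting this merge to work uniformly is where the real effort lies. Alongside this sit the finitely many small base cases of the gadget $C_n[\overline{K_h}]$ (especially for $m \in \{3,4\}$ and tiny $h$), where the generic difference grouping cannot steer the cycle lengths to $m$, together with the verification that the four excluded triples are the only obstructions. Once these finitely many ingredients are established by hand, the blow-up construction propagates them to every admissible $(h,u,m)$ and the induction closes.
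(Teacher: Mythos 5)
This statement is not proved in the paper at all: it is quoted as Theorem \ref{equip} with the citation \cite{L}, so there is no internal proof to compare against, and any attempt must be measured against the actual literature (the result is a substantial theorem in its own right). Your necessity direction is correct and routine, exactly as in the literature: $h(u-1)$-regularity forces $h(u-1)/2$ factors, spanning $2$-regularity forces $m \mid hu$, and bipartiteness of $K_{h,h}$ forces $m$ even when $u=2$.

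The sufficiency argument, however, rests on a gadget claim that is false: it is not true that $C_n[\overline{K_h}]$ has a $C_m$-factorization whenever $m \mid nh$. Every cycle in $C_n[\overline{K_h}]$ projects to a closed walk in $C_n$ with unit steps, so its length $m=p+q$ satisfies $p-q \equiv 0 \pmod{n}$; consequently the blow-up of an even cycle is bipartite and contains no odd cycles whatsoever, and even for $n$ odd it contains no odd cycle of length less than $n$. Concretely, take $(h,u,m)=(3,9,3)$: the theorem asserts a resolvable $C_3$-factorization of $K_{(3:9)}$ (all arithmetic conditions hold and the triple is not excluded), but $C_9[\overline{K_3}]$ is triangle-free, since any triangle would need an edge between non-consecutive parts. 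So blowing up a Walecki Hamilton decomposition of $K_u$ can never produce triangle factors once $u \geq 4$, and the same obstruction kills every odd $m < u$ and every odd $m$ with $u$ even --- infinitely many admissible triples, not the ``finitely many small base cases'' your last paragraph anticipates. This is a wrong skeleton rather than a boundary issue: for small odd $m$ one must decompose $K_u$ (or $K_{(h:u)}$ directly) into pieces whose blow-ups actually contain short odd cycles, which is why the published proofs route odd $m$ through resolvable group-divisible-design machinery (in the spirit of Kirkman-type constructions) rather than through Hamilton-cycle blow-ups; your approach, suitably restricted, is viable only when $m$ is large relative to $u$ or shares the right parity, e.g.\ in the style of the paper's own Lemmas \ref{16a} and \ref{16b}, where the cycle lengths sought are $2x$ on a blown-up $x$-cycle. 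Your worry about the leftover matching (needing $m \mid 2h$ for $K_{h,h}$) is well placed, but it is secondary to this structural failure.
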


\

Let $G_{0}, G_{1}, \ldots G_{x-1}$ be the $x$ parts of the equipartite graph $K_{(g:x)}$ with $G_{i}$ having vertex set $\{(i,j):0 \leq j \leq g-1\}$. Define a \emph{cross difference} to be the difference between two vertices from different parts of $K_{(g:x)}$. The cross difference from $(i_k,j_m)$ to $(i_{k+1},j_{m+1})$ is denoted by $d_k \equiv j_{m+1}-j_m \pmod{g}$, where $i_k \neq i_{k+1}$. Define an {\it inside difference} to be the difference between two vertices from the same part of $K_{(g:x)}$.

\

For a nonempty set $X$ of edges of a graph $G$, the subgraph $G[X]$ induced by $X$ has $X$ as its edge set and a vertex $v$ belongs to $G[X]$ if $v$ is incident with at least one edge in $X$. A subgraph $H$ of $G$ is edge-induced if there is a nonempty subset $X$ of $E(G)$ such that $H=G[X]$.

\

Let $G_{i_{k}}[D]$ be the edge-induced subgraph of $K_g$ on $G_{i_k}$, consisting of all inside edges of $K_g$ on $G_{i_k}$ having a difference in $D$. If $X$ is simply a set of edges from $K_g$, then $G_{i_k}[X]$ is the edge induced subgraph $G[X]$ on $G_{i_k}$.

\section{The Subgraph $G[\Delta]$}

In this section, we will investigate particular subgraphs of $K_{(g:x)}$. Let $G=K_{(g:x)}$ and consider a 2-regular spanning graph $H$ on $x$ vertices. Suppose that for each edge  $\{i_k,i_{k+1}\} \in E(H)$, there is a cross difference, $d_k$, assigned to that edge. Let $D_k$  be the set of edges from parts $G_{i_k}$ to $G_{i_{k+1}}$ with cross difference $d_k$. Then $G[D_k]$ is the edge induced subgraph of $G$ consisting of all edges from $G_{i_k}$ to $G_{i_{k+1}}$ with cross difference $d_k$. If $(d_1, d_2, \ldots, d_x)$ is a list of differences, then $\Delta=(D_1,D_2, \ldots, D_x)$ is a list of edges with particular cross differences. We denote $G[\Delta]$ as the edge induced subgraph of $G$ consisting of those edges. 

\

{\bf Example:} Let $G=K_{(3:7)}$, and suppose $H$ consists of a 4-cycle $(i_1,i_2,i_3,i_4)=(0,2,1,3)$ and a 3-cycle $(i_5,i_6,i_7)=(4,5,6)$. Given the list of differences $(d_1,d_2,d_3,d_4,d_5,d_6,d_7)=(0,1,1,2,1,1,1)$, then $$\Delta=(D_1,D_2,D_3,D_4,D_5,D_6,D_7)$$ 

with 

$$D_1=\{\hbox{edges from} \ G_0 \ \hbox{to} \ G_2 \ \hbox{with difference} \ 0\},$$
$$D_2=\{\hbox{edges from} \ G_2 \ \hbox{to} \ G_1 \ \hbox{with difference} \ 1\},$$
$$D_3=\{\hbox{edges from} \ G_1 \ \hbox{to} \ G_3 \ \hbox{with difference} \ 1\},$$
$$D_4=\{\hbox{edges from} \ G_3 \ \hbox{to} \ G_0 \ \hbox{with difference} \ 2\},$$
$$D_5=\{\hbox{edges from} \ G_4 \ \hbox{to} \ G_5 \ \hbox{with difference} \ 1\},$$
$$D_6=\{\hbox{edges from} \ G_5 \ \hbox{to} \ G_6 \ \hbox{with difference} \ 1\},$$
$$D_7=\{\hbox{edges from} \ G_6 \ \hbox{to} \ G_4 \ \hbox{with difference} \ 1\},$$
\bigskip

Then $G[\Delta]$ is the subgraph of $G$ consisting of these edges. We give the subgraph $G[\Delta]$ in Figure \ref{0}. Notice that each cycle, $c=(i_1,i_2, \ldots, i_k)$, gives rise to cycles in $G[\Delta]$ on the vertices in $G_{i_1} \cup G_{i_2} \cup \ldots \cup G_{i_k}$. The next result characterizes the cycles in $G[\Delta]$ when $H$ is Hamilton cycle.

\

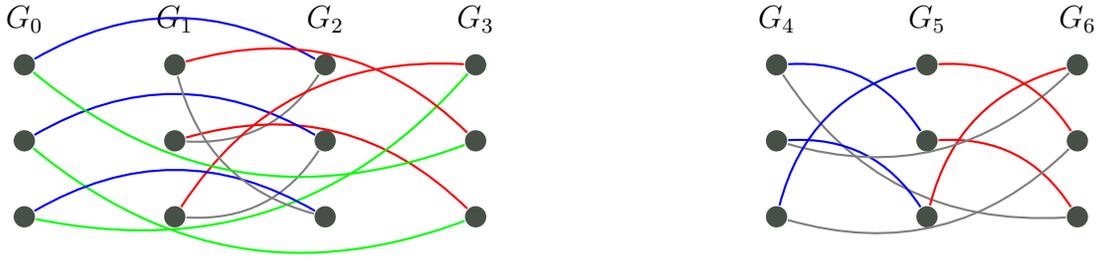
\begin{figure}[h!]
\centering
\definecolor{mygray}{RGB}{70,80,70}

\begin{tikzpicture}[thick,
  every node/.style={circle, inner sep=0.1cm},
  fsnode/.style={fill=mygray},
  gsnode/.style={fill=mygray},
  hsnode/.style={fill=mygray},
  isnode/.style={fill=mygray}
]

% the vertices of 1
\begin{scope}[start chain=going below,node distance=7mm]
  \node[fsnode,on chain] (f1) [label={above:$G_0$}]{};
  \node[fsnode,on chain] (f2) {};
  \node[fsnode,on chain] (f3) {};
\end{scope}

% the vertices of 2
\begin{scope}[xshift=2cm,yshift=0cm,start chain=going below,node distance=7mm]
  \node[gsnode,on chain] (g1) [label={above:$G_1$}]{};
  \node[gsnode,on chain] (g2) {};
  \node[gsnode,on chain] (g3) {};
\end{scope}

% the vertices of 3
\begin{scope}[xshift=4cm,yshift=0cm,start chain=going below,node distance=7mm]
  \node[hsnode,on chain] (h1) [label={above:$G_2$}]{};
  \node[hsnode,on chain] (h2) {};
  \node[hsnode,on chain] (h3) {};
\end{scope}

% the vertices of 4
\begin{scope}[xshift=6cm,yshift=0cm,start chain=going below,node distance=7mm]
  \node[isnode,on chain] (i1) [label={above:$G_3$}]{};
  \node[isnode,on chain] (i2) {};
  \node[isnode,on chain] (i3) {};
\end{scope}

% the vertices of 5
\begin{scope}[xshift=10cm,yshift=0cm,start chain=going below,node distance=7mm]
  \node[isnode,on chain] (j1) [label={above:$G_4$}]{};
  \node[isnode,on chain] (j2) {};
  \node[isnode,on chain] (j3) {};
\end{scope}

% the vertices of 6
\begin{scope}[xshift=12cm,yshift=0cm,start chain=going below,node distance=7mm]
  \node[isnode,on chain] (k1) [label={above:$G_5$}]{};
  \node[isnode,on chain] (k2) {};
  \node[isnode,on chain] (k3) {};
\end{scope}

% the vertices of 7
\begin{scope}[xshift=14cm,yshift=0cm,start chain=going below,node distance=7mm]
  \node[isnode,on chain] (l1) [label={above:$G_6$}]{};
  \node[isnode,on chain] (l2) {};
  \node[isnode,on chain] (l3) {};
\end{scope}

% the edges
\draw (f1) edge[blue,bend left] node [left] {}(h1);
\draw (h1) edge[gray,bend left] node [left] {}(g2);
\draw (g2) edge[red,bend left] node [left] {}(i3);
\draw (i3) edge[green,bend left] node [left] {}(f2);
\draw (f2) edge[blue,bend left] node [left] {}(h2);
\draw (h2) edge[gray,bend left] node [left] {}(g3);
\draw (g3) edge[red,bend left] node [left] {}(i1);
\draw (i1) edge[green,bend left] node [left] {}(f3);
\draw (f3) edge[blue,bend left] node [left] {}(h3);
\draw (h3) edge[gray,bend left] node [left] {}(g1);
\draw (g1) edge[red,bend left] node [left] {}(i2);
\draw (i2) edge[green,bend left] node [left] {}(f1);
\draw (j1) edge[blue,bend left] node [left] {}(k2);
\draw (k2) edge[red,bend left] node [left] {}(l3);
\draw (l3) edge[gray,bend left] node [left] {}(j1);
\draw (j2) edge[blue,bend left] node [left] {}(k3);
\draw (k3) edge[red,bend left] node [left] {}(l1);
\draw (l1) edge[gray,bend left] node [left] {}(j2);
\draw (j3) edge[blue,bend left] node [left] {}(k1);
\draw (k1) edge[red,bend left] node [left] {}(l2);
\draw (l2) edge[gray,bend left] node [left] {}(j3);
\end{tikzpicture}

\caption{Edge induced subgraph with cross differences $(0,1,1,2)$ on the  4-cycle $(0,2,1,3)$, and cross differences $(1,1,1)$ on the 3-cycle $(4,5,6)$}.
\label{0}
\end{figure}

\begin{lemma}
\label{Lemma2}
Let $G=K_{(g:x)}$, and let $H$ be an $x$-cycle. Given the list of differences $(d_1,d_2, \ldots ,d_x)$, let $d \equiv \sum_{k=1}^x d_k \pmod{g}$.  If $\gcd(d, g)=h$, then $G[\Delta]$ consists of $h$ components; and each component is
\begin{enumerate}
\item{a ${(\frac{g}{h})}x$-cycle if $d \neq \frac{g}{2}$ and}
\item{a $2x$-cycle if $d=\frac{g}{2}$.}
\end{enumerate}
\end{lemma}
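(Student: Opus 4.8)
The plan is to first observe that $G[\Delta]$ is $2$-regular, so that it decomposes uniquely into cycles, and then to follow edges around the part-cycle $H=(i_1,i_2,\ldots,i_x)$ to read off those cycles explicitly. Indeed, every vertex $(i_k,j)$ is incident with exactly one edge of $D_k$, namely $\{(i_k,j),(i_{k+1},j+d_k)\}$, and exactly one edge of $D_{k-1}$, namely $\{(i_{k-1},j-d_{k-1}),(i_k,j)\}$ (indices read cyclically, so $D_0=D_x$). Hence $G[\Delta]$ is $2$-regular and is a disjoint union of cycles. I would fix the ``forward'' orientation in which, upon arriving at $(i_k,j)$, one leaves along the $D_k$-edge to $(i_{k+1},j+d_k)$; this is consistent along each cycle and advances the part-index by one step of $H$ at a time.

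Next I would compute the length of the cycle through a fixed vertex $(i_1,j_0)$. Writing $\sigma_k=d_1+\cdots+d_k$ (with $\sigma_0=0$), following forward edges visits $(i_1,j_0),(i_2,j_0+\sigma_1),\ldots,(i_x,j_0+\sigma_{x-1})$, and after one complete lap around $H$ one returns to part $G_{i_1}$ at the vertex $(i_1,j_0+d)$, where $d\equiv\sigma_x \pmod g$. Thus one lap acts on the $G_{i_1}$-vertices as the translation $j\mapsto j+d$ of $\mathbb{Z}_g$. The cycle closes after the least number of laps $m$ with $md\equiv 0\pmod g$, namely $m=g/\gcd(d,g)=g/h$, so each cycle has length $mx=(g/h)x$. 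Since each cycle meets $G_{i_1}$ in exactly the translation-orbit $\{j_0,j_0+d,\ldots,j_0+(m-1)d\}$, the cycles are in bijection with the orbits of $j\mapsto j+d$ on $\mathbb{Z}_g$, of which there are exactly $\gcd(d,g)=h$. This yields statement (1).

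To streamline the bookkeeping and to make the analogy with Corollary \ref{Stern-Lenz2} precise, I would relabel each part $G_{i_k}$ by $j\mapsto j-\sigma_{k-1}\pmod g$. Under this bijection the edge-sets $D_1,\ldots,D_{x-1}$ become identity matchings (difference $0$), while $D_x$ becomes the difference-$d$ matching from $G_{i_x}$ back to $G_{i_1}$. In other words, only the net difference $d$ survives, and $G[\Delta]$ becomes the graph obtained from the single-difference graph $G(\{d\},g)$ of Corollary \ref{Stern-Lenz2} by replacing each difference-$d$ traversal with a length-$x$ path through the $x$ parts. For $d\neq \tfrac{g}{2}$ the $h$ components of length $g/h$ guaranteed by that corollary are thereby inflated to the $h$ cycles of length $(g/h)x$ found above, reconfirming (1).

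The step that needs genuine care is statement (2), the case $d=\tfrac{g}{2}$, which is precisely where $G(\{d\},g)$ degenerates into copies of $K_2$ in Corollary \ref{Stern-Lenz2}. Here $\gcd(d,g)=\tfrac{g}{2}=h$, so the formula $(g/h)x$ evaluates to $2x$; the task is to confirm that the corresponding orbit really produces one $2x$-cycle rather than collapsing. The orbit of $j_0$ under $j\mapsto j+\tfrac{g}{2}$ is $\{j_0,j_0+\tfrac{g}{2}\}$, so the cycle passes through $G_{i_1}$ at the two distinct vertices $(i_1,j_0)$ and $(i_1,j_0+\tfrac{g}{2})$ and, over its two laps, visits $2x$ pairwise distinct vertices (distinct first coordinates across parts, and $\tfrac{g}{2}\not\equiv 0\pmod g$ within each part). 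The collapse to $K_2$ seen for a single difference is avoided because, with $x\geq 3$, the two wrap-around edges $\{(i_x,j_0+\sigma_{x-1}),(i_1,j_0+\tfrac{g}{2})\}$ and $\{(i_x,j_0+\sigma_{x-1}+\tfrac{g}{2}),(i_1,j_0)\}$ have distinct endpoints and are therefore distinct edges. Verifying this distinctness is the main obstacle; once established it closes the single $2x$-cycle, giving $h=\tfrac{g}{2}$ such cycles and completing statement (2).
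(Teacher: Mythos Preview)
Your proposal is correct and follows essentially the same approach as the paper: both arguments reduce the cycle structure of $G[\Delta]$ to the orbit structure of the translation $j\mapsto j+d$ on $\mathbb{Z}_g$ (equivalently, to the single-difference graph of Corollary~\ref{Stern-Lenz2}), and then observe that each step of that translation corresponds to an $x$-path through the parts of $H$. Your version is more explicit---you verify $2$-regularity, compute orbit lengths directly, and spell out the relabelling that sends all but the last difference to zero---whereas the paper simply invokes Corollary~\ref{Stern-Lenz2} on $G_0$ and asserts the path correspondence; but the substance is the same.
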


\begin{proof}
Without loss of generality, we assume the cycle $c=(0,1, \ldots, x-1)$ is given. Suppose $d \not = \frac{g}{2}$. Let $D=\{d\}$, and consider the graph $G(D,g)$ on $G_0$. By Corollary ~\ref{Stern-Lenz2}, $G(D,g)$ consists of $h$ components, and each component is a $(\frac{g}{h})$-cycle. 

For every edge $\{(0,j_1),(0,j_2)\}$ on this $(\frac{g}{h})$-cycle, there is a path of length $x$ on $G[\Delta]$ that connects $(0,j_1)$ to $(0,j_2)$. Thus, any cycle of length $(\frac{g}{h})$ on $G_0$ corresponds to a cycle of length $(\frac{g}{h})x$ in $G[\Delta]$.

Now suppose $d=\frac{g}{2}$. Then by Corollary~\ref{Stern-Lenz}, $G(D,g)$ consists of $\frac{g}{2}$ components, and each component is a $K_2$. For every edge $\{(0,j_1),(0,j_2)\}$, there is a path of length $x$ from $(0,j_1)$ to $(0,j_2)$ in $G[\Delta]$, and there is a path of length $x$ from $(0,j_2)$ to $(0,j_1)$ in $G[\Delta]$. Therefore, each edge in $G(D,g)$ corresponds to a cycle of length $2x$ in $G[\Delta]$ .
\end{proof}

\

\section{Modified Row Sum Matrices}

When decomposing $G=K_{(g:x)}$ into cycle factors, it is helpful to be able to keep track of which cross differences have been covered, and what cycle sizes these differences produce in $G[\Delta]$. The concept of a row-sum matrix was introduced in \cite{BUR}. For our purposes, we use a similar structure which we will refer to as a modified row-sum matrix.

Let $\Gamma$ be a group, and let $S \subset \Gamma$. For $i= 0,1, \ldots,x-1$, let $S_i \subseteq S$, where $|S_i|=t$. Also, let $\Sigma$ be a $t$-list of elements of $\Gamma$. A modified row-sum matrix $MRSM_\Gamma(S,t,x;\Sigma)$ is a $t$ by $x$ matrix, whose $x \geq 2$ columns are permutations of $S_i$ and such that the list of (left-to-right) row-sums is $\Sigma$.

Given a $MRSM_{\mathbb Z_g}(S,t,x;\Sigma)$, each row of this matrix produces a subgraph, $G[\Delta]$, of $G=K_{(g:x)}$, according to part 1 of Lemma \ref{Lemma2}.

\begin{corollary}
    Suppose $(d_{i_1},d_{i_2},\ldots d_{i_x})$ is a row of a modified row-sum matrix \\ $MRSM_{\mathbb Z_g}(S,t,x;\Sigma)$. If $d\equiv \sum_{k=1}^{x} d_{i_{k}} \pmod{g}$ with $d \neq \frac{g}{2}$, then $G[\Delta]$ consists of $h=\gcd(d,g)$ components, and each component is a $\big (\frac{g}{h} \big)x$-cycle.
\end{corollary}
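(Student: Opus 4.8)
The plan is to obtain this corollary as an immediate specialization of Lemma \ref{Lemma2}. The essential observation is that a single row of the matrix $MRSM_{\mathbb{Z}_g}(S,t,x;\Sigma)$ encodes precisely the data required to apply that lemma. First I would fix the $x$-cycle $H$ on the $x$ parts of $G=K_{(g:x)}$ to be the Hamilton cycle whose consecutive edges are indexed by the columns of the matrix; without loss of generality we may take $H=(0,1,\ldots,x-1)$ as in the proof of Lemma \ref{Lemma2}. Since the matrix has exactly $x$ columns, the row $(d_{i_1},d_{i_2},\ldots,d_{i_x})$ assigns one cross difference to each of the $x$ edges of $H$, so it is exactly a list of differences $(d_1,d_2,\ldots,d_x)$ in the sense of Section 3, and it determines the list $\Delta=(D_1,\ldots,D_x)$ and the edge-induced subgraph $G[\Delta]$.

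Next I would identify the relevant sum: by hypothesis $d\equiv\sum_{k=1}^x d_{i_k}\pmod{g}$ is the left-to-right row-sum, which is exactly the quantity appearing in Lemma \ref{Lemma2}. Because we are given $d\neq \frac{g}{2}$, only case 1 of Lemma \ref{Lemma2} is relevant, and applying it directly yields that $G[\Delta]$ has $h=\gcd(d,g)$ components, each a $\big(\frac{g}{h}\big)x$-cycle, which is precisely the claimed conclusion.

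One point I would address explicitly is that the defining feature of the modified row-sum matrix, namely that its columns are permutations of the sets $S_i$, plays no role in the cycle structure of a single row. The order in which the cross differences appear along $H$ affects which specific edges lie in each $D_k$, but Lemma \ref{Lemma2} shows that the number and length of the components of $G[\Delta]$ depend only on the value of $d$ modulo $g$. Consequently the conclusion is insensitive to the particular permutation realized in each column, and no argument beyond the hypothesis $d\neq\frac{g}{2}$ is needed.

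Because of this, I do not anticipate any genuine obstacle: the content of the corollary is simply the translation of the matrix language of Section 4 into the graph-theoretic language of Lemma \ref{Lemma2}, and the only care required is the bookkeeping that matches a row of $x$ entries to the $x$ edges of the Hamilton cycle $H$. The substantive work has already been carried out in establishing Lemma \ref{Lemma2} via Corollary \ref{Stern-Lenz2}.
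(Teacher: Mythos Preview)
Your proposal is correct and matches the paper's own treatment: the corollary is stated immediately after the definition of the modified row-sum matrix as a direct application of part 1 of Lemma~\ref{Lemma2}, with no additional argument given. Your observation that the column-permutation property of the matrix is irrelevant to the cycle structure of a single row is accurate and is precisely why the paper offers no separate proof.
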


\ 

Denote ${d_k}^+$ as the union of an edge $((i_k,j),(i_{k+1},j+d_k))$ with cross difference $d_k$, and the inside edge $\{(i_{k+1},j+d_k),(i_{k+1},j+d_k+1)\}$ with difference 1. Then ${d_k}^+$ consists of a 2-path whose endpoints have a cross difference of $d_{k}+1$. Similarly, denote ${d_k}^-$ as the union of an edge $((i_k,j),(i_{k+1},j+d_k))$ with cross difference $d_k$, and the inside edge $\{(i_{k+1},j+d_k),(i_{k+1},j+d_k-1)\}$ with difference $-1$. Then ${d_k}^-$ consists of a 2-path whose endpoints have a cross difference of $d_{k}-1$. We provide a representation of ${d_k}^+$ and ${d_k}^-$ in Figure~\ref{1}.

\begin{figure}[h!]
\centering
\definecolor{mygray}{RGB}{70,80,70}

\begin{tikzpicture}[thick,
  every node/.style={circle, inner sep=0.1cm},
  fsnode/.style={fill=mygray},
  gsnode/.style={fill=mygray},
  hsnode/.style={fill=mygray},
  isnode/.style={fill=mygray}
]

% the vertices of 1
\begin{scope}[start chain=going below,node distance=7mm]
  \node[fsnode,on chain] (f1) [label={left:${(i_k,j)}$}]{};
  \node[fsnode,on chain] (f2) {};
  \node[fsnode,on chain] (f3) {};
  \node[fsnode,on chain] (f4) {};
  \node[fsnode,on chain] (f5) {};
  \node[fsnode,on chain] (f6) {};
\end{scope}

% the vertices of 2
\begin{scope}[xshift=2cm,yshift=0cm,start chain=going below,node distance=7mm]
  \node[gsnode,on chain] (g1) [label={left:${d_{k}}^+$}] {};
  \node[gsnode,on chain] (g2) {};
  \node[gsnode,on chain] (g3) [label={right:$(i_{k+1},j+d_k)$}] {};
  \node[gsnode,on chain] (g4) [label={right:$(i_{k+1},j+d_k+1)$}] {};
  \node[gsnode,on chain] (g5) {};
  \node[gsnode,on chain] (g6) {};
\end{scope}

% the vertices of 3
\begin{scope}[xshift=6cm,yshift=0cm,start chain=going below,node distance=7mm]
  \node[hsnode,on chain] (h1) [label={left:${(i_k,j)}$}]{};
  \node[hsnode,on chain] (h2) {};
  \node[hsnode,on chain] (h3) {};
  \node[hsnode,on chain] (h4) {};
  \node[hsnode,on chain] (h5) {};
  \node[hsnode,on chain] (h6) {};
\end{scope}

% the vertices of 4
\begin{scope}[xshift=8cm,yshift=0cm,start chain=going below,node distance=7mm]
  \node[isnode,on chain] (i1) [label={left:${d_{k}}^-$}] {};
  \node[isnode,on chain] (i2) [label={right:$(i_{k+1},j+d_k-1)$}]{};
  \node[isnode,on chain] (i3) [label={right:$(i_{k+1},j+d_k)$}] {};
  \node[isnode,on chain] (i4) {};
  \node[isnode,on chain] (i5) {};
  \node[isnode,on chain] (i6) {};
\end{scope}

% the edges
\draw (f1) edge[black,line width= 1pt] node {}(g3);
\draw (g3) -- (g4);
\draw (f1) edge[red,line width= 1.5pt] [dashed] node {} (g4);
\draw (h1) edge[black,line width= 1pt] node {} (i3);
\draw (i3) -- (i2);
\draw (h1) edge[red,line width= 1.5pt] [dashed] node {} (i2);
\end{tikzpicture}

\caption{Representation of ${d_k}^+$ and ${d_k}^-$.}
\label{1}
\end{figure}
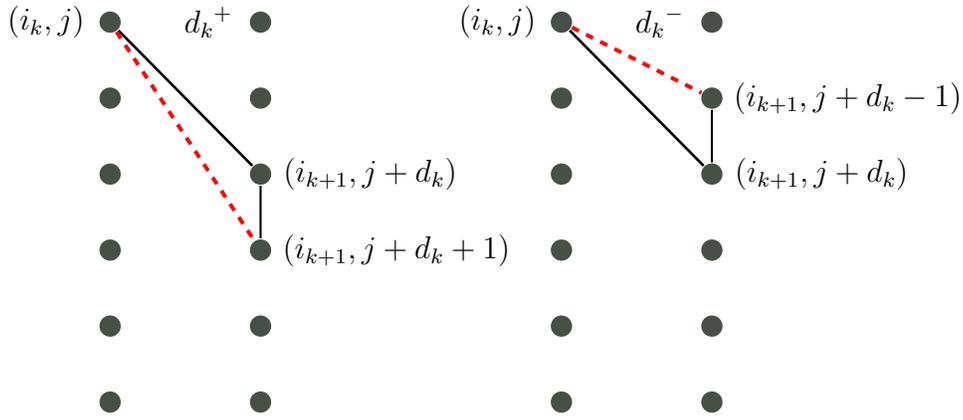

\

It is well known that a 1-factorization of $K_6$ exists. We give a specific 1-factorization of $K_6$ into five 1-factors; $f_1, \ldots, f_5$, in Figure \ref{1-factors}; this particular 1-factorization will be used to prove the following.

\begin{figure}[h!]
\centering
\definecolor{mygray}{RGB}{70,80,70}

\begin{tikzpicture}[thick,
  every node/.style={circle, inner sep=0.1cm},
  fsnode/.style={fill=mygray},
  gsnode/.style={fill=mygray},
  hsnode/.style={fill=mygray},
  isnode/.style={fill=mygray},
  jsnode/.style={fill=mygray}
]

% the vertices of f1
\begin{scope}[start chain=going below,node distance=7mm]
\foreach \i in {1,2,...,6}
  \node[fsnode,on chain] (f\i) {};
\end{scope}

% the vertices of f2
\begin{scope}[xshift=2cm,yshift=0cm,start chain=going below,node distance=7mm]
\foreach \i in {1,2,...,6}
  \node[gsnode,on chain] (g\i) {};
\end{scope}

% the vertices of f3
\begin{scope}[xshift=4cm,yshift=0cm,start chain=going below,node distance=7mm]
\foreach \i in {1,2,...,6}
  \node[hsnode,on chain] (h\i) {};
\end{scope}

% the vertices of f4
\begin{scope}[xshift=6cm,yshift=0cm,start chain=going below,node distance=7mm]
\foreach \i in {1,2,...,6}
  \node[isnode,on chain] (i\i) {};
\end{scope}

% the vertices of f5
\begin{scope}[xshift=8cm,yshift=0cm,start chain=going below,node distance=7mm]
\foreach \i in {1,2,...,6}
  \node[isnode,on chain] (j\i) {};
\end{scope}

% the set f1
\node [fit=(f1) (f6),label=above:$f_1$] {};
% the set f2
\node [fit=(g1) (g6),label=above:$f_2$] {};
% the set f3
\node [fit=(h1) (h6),label=above:$f_3$] {};
% the set f4
\node [fit=(i1) (i6),label=above:$f_4$] {};
% the set f5
\node [fit=(j1) (j6),label=above:$f_5$] {};

% the edges
\draw (f1) -- (f2);
\draw (f3) -- (f4);
\draw (f5) -- (f6);
\draw (g1) edge[bend right] node [left] {}(g6);
\draw (g2) -- (g3);
\draw (g4) -- (g5);
\draw (h1) edge[bend right] node [left] {}(h4);
\draw (h2) edge[bend right] node [left] {}(h6);
\draw (h3) edge[bend right] node [left] {}(h5);
\draw (i1) edge[bend right] node [left] {}(i5);
\draw (i2) edge[bend right] node [left] {}(i4);
\draw (i3) edge[bend right] node [left] {}(i6);
\draw (j1) edge[bend right] node [left] {}(j3);
\draw (j2) edge[bend right] node [left] {}(j5);
\draw (j4) edge[bend right] node [left] {}(j6);
\end{tikzpicture}
\caption{1-factorization of $K_6$ into 5 1-factors.}
\label{1-factors}
\end{figure}
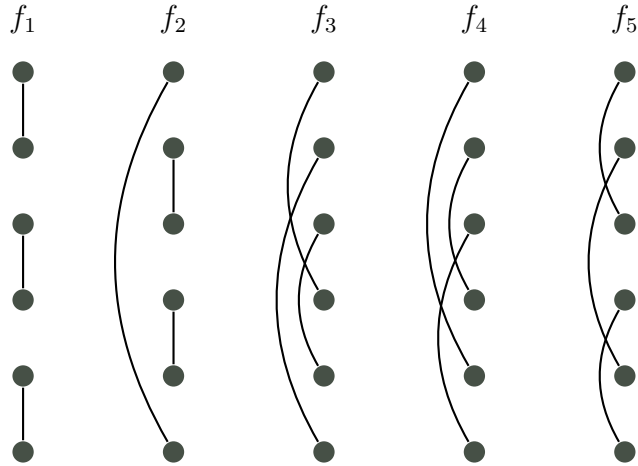

\begin{lemma}
\label{mix}
Let $T$ be a $MRSM_{\mathbb Z_6}(S,t,x;\Sigma)$ with $x$ even and with the property that the entries in some row $\alpha$ sum to $d \pmod{6}$, with $d$ even. Then $G[\Delta] {\bigcup_{k=1}^{x}} G_{i_k}[f_1 \bigcup f_2]$ has a decomposition into two $(2x)$-cycle factors. 
\end{lemma}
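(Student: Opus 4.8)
The plan is to obtain both factors at once by absorbing a single difference-$1$ inside edge into each cross edge, so that the combined graph becomes a list of \emph{effective} cross differences to which Lemma~\ref{Lemma2} applies, and then to split the resulting cycles into two spanning families according to the parity of a starting index. Throughout I take $H=(i_1,\dots,i_x)$ to be the $x$-cycle on the parts carrying row $\alpha$ of $T$, and write $(d_1,\dots,d_x)$ for that row, so that $\sum_{k=1}^x d_k\equiv d\pmod 6$ with $d$ even; in particular $d\neq 3$, so only part~1 of Lemma~\ref{Lemma2} will be needed. Here $G=K_{(6:x)}$.

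The key observation concerns the inside edges. For a fixed bundle $D_k$ (the six cross edges from $G_{i_k}$ to $G_{i_{k+1}}$), replacing every edge of $D_k$ by the $2$-path $d_k^{+}$ attaches, at the six arrival vertices $(i_{k+1},v)$ with $v\in\mathbb{Z}_6$, exactly the six inside edges $\{(i_{k+1},v),(i_{k+1},v+1)\}$; these are precisely the six difference-$1$ edges of $G_{i_{k+1}}$, that is, all of $G_{i_{k+1}}[f_1\cup f_2]$, each used once. The same holds if every edge of $D_k$ is replaced by $d_k^{-}$. Hence for \emph{any} single sign $e_k\in\{+1,-1\}$ chosen per bundle, the chosen $2$-paths together use each cross edge of $G[\Delta]$ once and each inside edge of $\bigcup_{k} G_{i_k}[f_1\cup f_2]$ once. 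Treating each such $2$-path as one edge of effective cross difference $d_k+e_k$, I would choose the signs so that $\sum_{k=1}^x (d_k+e_k)=d+\sum_k e_k\equiv 0\pmod 6$; this is possible because $\sum_k e_k$ ranges over every even integer in $[-x,x]$ (here $x$ even is used) while $-d\bmod 6\in\{0,2,4\}$ (here $d$ even is used). By Lemma~\ref{Lemma2} applied to the effective differences, the effective graph then has $\gcd(0,6)=6$ components, each an $x$-cycle in effective edges, i.e.\ a $(2x)$-cycle in $G[\Delta]\cup\bigcup_k G_{i_k}[f_1\cup f_2]$; these six $(2x)$-cycles partition all $12x$ edges.

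It then remains to group the six $(2x)$-cycles into two spanning $2$-factors. Since each effective edge advances the level by $d_k+e_k$, the arrival level of a cycle at part $i_m$ equals $v_0+c_m$, where $v_0\in\mathbb{Z}_6$ is its level at $i_1$ and $c_m$ depends only on $m$; thus the six cycles are indexed bijectively by $v_0$, and at part $i_m$ each cycle occupies the consecutive pair $\{v_0+c_m,\;v_0+c_m+e_m\}$. Taking the three cycles with $v_0\in\{0,2,4\}$ as one family and those with $v_0\in\{1,3,5\}$ as the other, the three pairs in each family at part $i_m$ are a fixed translate of $\{0,1\},\{2,3\},\{4,5\}$ and therefore partition the six vertices of $G_{i_m}$. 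Hence each family is a genuine spanning $2$-factor, and each consists of three $(2x)$-cycles, giving the required decomposition.

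The step I expect to be the main obstacle is reconciling the two competing requirements: tuning the effective-difference sum to $0\pmod 6$ while still consuming each inside edge exactly once. The resolution is the forced-global-sign remark of the second paragraph, which I would prove by noting that at a single part one cannot mix $d_k^{+}$ and $d_k^{-}$ without either repeating or omitting one of the six difference-$1$ edges; so the sign is constant across each bundle. This single fact does double duty: it guarantees that all six inside edges of every part are used exactly once, and it leaves $\sum_k e_k$ entirely free subject only to parity. With that in hand the closure condition collapses to the elementary congruence above, and the final grouping is the translate argument, which applies uniformly precisely because $c_m$ is independent of the cycle.
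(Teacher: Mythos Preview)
Your proof is correct and follows essentially the same approach as the paper's: attach one difference-$1$ inside edge to each cross edge to create effective cross differences, choose the signs so the effective row sum is $0\pmod 6$, invoke Lemma~\ref{Lemma2} to obtain six effective $x$-cycles (hence six $(2x)$-cycles), and split them into two spanning factors by the parity of the starting level. The only real difference is presentational: the paper breaks into three cases $d\equiv 0,2,4\pmod 6$ and writes down explicit sign patterns (alternating $+,-$ with a tail of extra $-$'s), whereas you treat all three cases at once via the observation that $\sum_k e_k$ can realize any even residue modulo $6$. One small index slip: at part $G_{i_m}$ the inside edge comes from bundle $D_{m-1}$, so the consecutive pair is $\{v_0+c_m,\,v_0+c_m+e_{m-1}\}$ rather than with $e_m$; this does not affect the argument, since all that matters is that the pair is a consecutive pair translated by $v_0$.
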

\begin{proof}

{\bf Case 1.} Suppose the entries in row $\alpha$, of $T$ sum to $d\equiv 0 \pmod{6}$. Without loss of generality, we may assume that row $\alpha=(d_{i_1},d_{i_2}, \ldots, d_{i_x})=(d_0,d_1, \ldots, d_{x-1})$. For $k \in 0,1, \ldots, x-1$, replace each entry $d_k$ with $d_k^+$ if $k$ is even and $d_k^-$ if $k$ is odd. Then any edge from $G[\Delta]$ with cross difference $d_k$ corresponds to a 2-path whose endpoints have a cross difference of $d_k +1$ (for $d_k^+$) or $d_k -1$ (for $d_k^-$). Let $c'_0$ be the cycle containing point $(0,0)$ obtained by joining the endpoints of these 2-paths. The length of $c'_0$ can be determined by computing the sum of the revised entries in row $\alpha$ where $d_k^+ = d_k +1$ and $d_k^- = d_k -1$. This sum is 
\begin{align*} 
 d_1 +1+d_2 -1+d_3 +1 + \ldots + d_x -1 & = d_1+d_2+ \ldots+d_x +1-1 \ldots +1-1 \\
 & = d_1+d_2+ \ldots+d_x \\
 & = d \\
 & \equiv 0 \pmod{6}. 
\end{align*}

Therefore, by Lemma ~\ref{Lemma2}, $c'_0$ has length $x$.

\

We can now form a $2x$-cycle, $c_0$, by replacing each edge from $c'_0$ with its corresponding 2-path with the same endpoints. Thus $c_0$ contains cross-edges from $G[\Delta]$ and inside edges from $G_{i_k}[f_1 \cup f_2]$ for $k=1, \ldots, x$.

We can obtain two more $2x$-cycles, $c_2$ and $c_4$, by repeating this procedure with the cycles $c'_2$ (containing point $(0,2)$) and $c'_4$ (containing point $(0,4)$). This produces a $2x$-cycle factor.

We obtain a second factor by repeating this process with the cycles $c'_1, c'_3, c'_5$ which contain the points $(0,1)$, $(0,3)$, and $(0,5)$ respectively.  These two $2x$-cycle factors decompose $G[\Delta] \cup G_{i_k}[f_1 \cup f_2]$ for $k=1,2, \ldots, x$.

{\bf Case 2.} Suppose the entries in some row $\alpha$, of $T$ sum to $d\equiv 2 \pmod{6}$. Replace the first $x-2$ entries of row $\alpha$ with alternating $d_k^+$ and $d_k^-$, and the last two entries with $d_k^-$. 

Letting $d_k^+=d_k +1$ and $d_k^-=d_k -1$, we obtain the sum of row $\alpha$ as 

\begin{multline*}
     d_0 +1+d_1 -1+ \ldots + d_{x-4} +1+ d_{x-3} -1 +d_{x-2} -1+ d_{x-1}-1 \\
     = d_0+d_1+ \ldots+d_{x-3} + d_{x-2} -1+ d_{x-1} -1 = d-2 \equiv 0 \pmod{6}.
\end{multline*}

Now follow the technique given in case 1 to form two $2x$-cycle factors.

\

{\bf Case 3.} When $d\equiv 4 \pmod{6}$, replace the first $x-4$ entries of row $\alpha$  with alternating ${d_k}^+$ and ${d_k}^-$, and replace the last four entries with ${d_k}^-$ to obtain a sum that is congruent to $d\equiv 0 \pmod{6}$. Again, apply the same algorithm described in case 1 to obtain two $2x$-cycle factors. 

\end{proof}

 An illustration of this lemma, when $x=4$, is given in Figure \ref{4}.

\

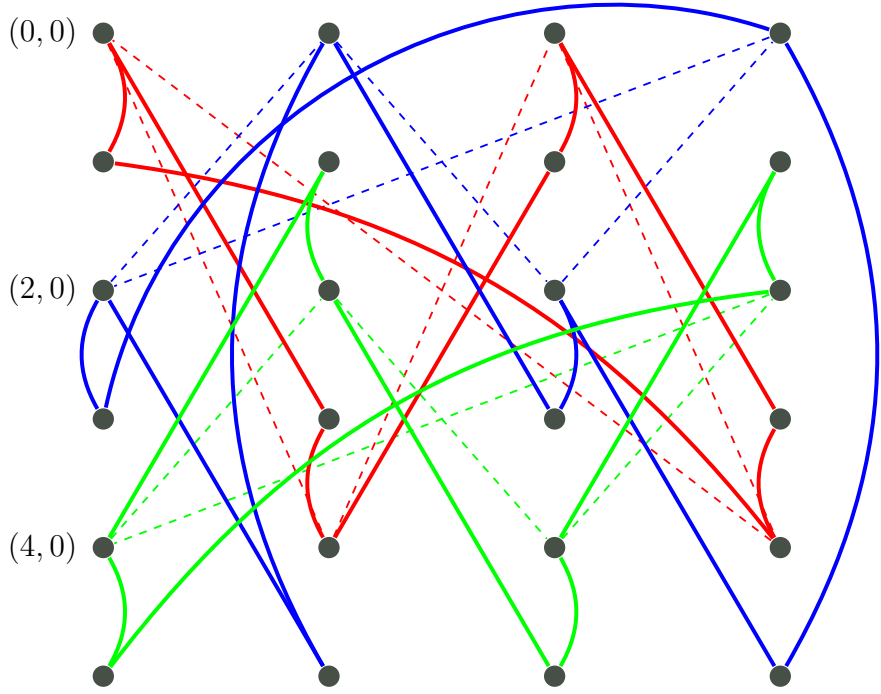
\begin{figure}[h!]
\centering
\definecolor{mygray}{RGB}{70,80,70}

\begin{tikzpicture}[thick,
  every node/.style={circle, inner sep=0.1cm},
  fsnode/.style={fill=mygray},
  gsnode/.style={fill=mygray},
  hsnode/.style={fill=mygray},
  isnode/.style={fill=mygray}
]

% the vertices of 1
\begin{scope}[start chain=going below,node distance=14mm]
  \node[fsnode,on chain] (f1) [label={left:${(0,0)}$}] {};
  \node[fsnode,on chain] (f2) {};
  \node[fsnode,on chain] (f3) [label={left:${(2,0)}$}] {};
  \node[fsnode,on chain] (f4) {};
  \node[fsnode,on chain] (f5) [label={left:${(4,0)}$}] {};
  \node[fsnode,on chain] (f6) {};
\end{scope}

% the vertices of 2
\begin{scope}[xshift=3cm,yshift=0cm,start chain=going below,node distance=14mm]
  \node[gsnode,on chain] (g1) {};
  \node[gsnode,on chain] (g2) {};
  \node[gsnode,on chain] (g3) {};
  \node[gsnode,on chain] (g4) {};
  \node[gsnode,on chain] (g5) {};
  \node[gsnode,on chain] (g6) {};
\end{scope}

% the vertices of 3
\begin{scope}[xshift=6cm,yshift=0cm,start chain=going below,node distance=14mm]
  \node[hsnode,on chain] (h1) {};
  \node[hsnode,on chain] (h2) {};
  \node[hsnode,on chain] (h3) {};
  \node[hsnode,on chain] (h4) {};
  \node[hsnode,on chain] (h5) {};
  \node[hsnode,on chain] (h6) {};
\end{scope}

% the vertices of 4
\begin{scope}[xshift=9cm,yshift=0cm,start chain=going below,node distance=14mm]
  \node[isnode,on chain] (i1) {};
  \node[isnode,on chain] (i2) {};
  \node[isnode,on chain] (i3) {};
  \node[isnode,on chain] (i4) {};
  \node[isnode,on chain] (i5) {};
  \node[isnode,on chain] (i6) {};
\end{scope}

% the edges
\draw (f1) edge[red,line width= 0.7pt] [dashed] node {} (g5);
\draw (g5) edge[red,line width= 0.7pt] [dashed] node {} (h1);
\draw (h1) edge[red,line width= 0.7pt] [dashed] node {} (i5);
\draw (i5) edge[red,line width= 0.7pt] [dashed] node {} (f1);
\draw (f3) edge[blue,line width= 0.7pt] [dashed] node {} (g1);
\draw (g1) edge[blue,line width= 0.7pt] [dashed] node {} (h3);
\draw (h3) edge[blue,line width= 0.7pt] [dashed] node {} (i1);
\draw (i1) edge[blue,line width= 0.7pt] [dashed] node {} (f3);
\draw (f5) edge[green,line width= 0.7pt] [dashed] node {} (g3);
\draw (g3) edge[green,line width= 0.7pt] [dashed] node {} (h5);
\draw (h5) edge[green,line width= 0.7pt] [dashed] node {} (i3);
\draw (i3) edge[green,line width= 0.7pt] [dashed]node {} (f5);
\draw (f1) edge[red,line width= 1.5pt] node {} (g4);
\draw (g4) edge[red,line width= 1.5pt] [bend right] node [left] {} (g5);
\draw (g5) edge[red,line width= 1.5pt] node {} (h2);
\draw (h2) edge[red,line width= 1.5pt] [bend right] node [left] {} (h1);
\draw (h1) edge[red,line width= 1.5pt] node {} (i4);
\draw (i4) edge[red,line width= 1.5pt] [bend right] node [left] {} (i5);
\draw (i5) edge[red,line width= 1.5pt] [bend right=22] node {} (f2);
\draw (f2) edge[red,line width= 1.5pt] [bend right] node [left] {} (f1);
\draw (f3) edge[blue,line width= 1.5pt] node {} (g6);
\draw (g6) edge[blue,line width= 1.5pt] [bend left] node [left] {} (g1);
\draw (g1) edge[blue,line width= 1.5pt] node {} (h4);
\draw (h4) edge[blue,line width= 1.5pt] [bend right] node [left] {} (h3);
\draw (h3) edge[blue,line width= 1.5pt] node {} (i6);
\draw (i6) edge[blue,line width= 1.5pt] [bend right] node [left] {} (i1);
\draw (i1) edge[blue,line width= 1.5pt] [bend right=48] node {} (f4);
\draw (f4) edge[blue,line width= 1.5pt] [bend left] node [left] {} (f3);
\draw (f5) edge[green,line width= 1.5pt] node {} (g2);
\draw (g2) edge[green,line width= 1.5pt] [bend right] node [left] {} (g3);
\draw (g3) edge[green,line width= 1.5pt] node {} (h6);
\draw (h6) edge[green,line width= 1.5pt] [bend right] node [left] {} (h5);
\draw (h5) edge[green,line width= 1.5pt] node {} (i2);
\draw (i2) edge[green,line width= 1.5pt] [bend right] node [left] {} (i3);
\draw (i3) edge[green,line width= 1.5pt] [bend right=23] node {} (f6);
\draw (f6) edge[green,line width= 1.5pt] [bend right] node [left] {} (f5);
\end{tikzpicture}

\caption{One 8-cycle factor in $K_{(6:4)}$ from a $MRSM_{\mathbb Z_6}(S,t,4;\Sigma)$ with row $\alpha = [3^+,3^-,3^+,3^-]$. The dashed edges represent the 4-cycles ${c'_0}, c'_2, c'_4$.}
\label{4}
\end{figure}
\

\begin{lemma}
\label{row0s} 
Let $T$ be a $MRSM_{\mathbb Z_6}(S,t,x;\Sigma)$ with $x$ even and with the property that the entries in some row are all $0$. Then $G[\Delta] {\bigcup_{k=1}^{x}} G_{i_{k}}[f_5]$ can be decomposed into a $(2x)$-cycle and a 1-factor.
\end{lemma}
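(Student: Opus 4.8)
The plan is to identify the cubic graph $G[\Delta]\cup\bigcup_{k=1}^{x}G_{i_k}[f_5]$ as a disjoint union of three prisms and to decompose each prism into a Hamilton cycle and a perfect matching. As in the proof of Lemma~\ref{Lemma2}, I would assume without loss of generality that the cycle on the parts is $(0,1,\dots,x-1)$. Since the chosen row is identically $0$, its row-sum is $d\equiv 0\pmod 6$, so $d\neq 3=\frac g2$ and $\gcd(d,6)=6$; Lemma~\ref{Lemma2} then gives that $G[\Delta]$ is a union of six $x$-cycles. Because every cross difference is $0$, these are the ``vertical'' cycles $V_j=((0,j),(1,j),\dots,(x-1,j))$ for $j=0,1,\dots,5$, where $V_j$ runs through the copies of level $j$ across the parts.

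Reading $f_5$ from Figure~\ref{1-factors} (with levels labelled $0,\dots,5$), its edges are $\{0,2\}$, $\{1,4\}$, $\{3,5\}$, so in each part $G_{i_k}$ the set $G_{i_k}[f_5]$ contributes the three rungs $(k,0)(k,2)$, $(k,1)(k,4)$, $(k,3)(k,5)$. The crucial point is that the three level-pairs $\{0,2\}$, $\{1,4\}$, $\{3,5\}$ partition $\{0,\dots,5\}$, and that the rungs belonging to a fixed pair form a part-aligned perfect matching between the two corresponding vertical cycles. Hence $V_0\cup V_2$ together with its $x$ rungs is a prism (two $x$-cycles joined by a perfect matching of $x$ rungs), and likewise for the pairs $\{1,4\}$ and $\{3,5\}$. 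These three prisms lie on disjoint vertex sets whose union is $V(G)$, so they partition the edge set of $G[\Delta]\cup\bigcup_{k=1}^{x}G_{i_k}[f_5]$.

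It then suffices to split each prism into a Hamilton ($2x$-)cycle and a perfect matching. For the pair $\{0,2\}$ I would exhibit the explicit Hamilton cycle
\[
(0,0),(1,0),\dots,(x-1,0),(x-1,2),(x-2,2),\dots,(0,2),(0,0),
\]
which uses every edge of $V_0$ and of $V_2$ except $(x-1,0)(0,0)$ and $(x-1,2)(0,2)$, plus the two rungs at indices $0$ and $x-1$. The $x$ leftover edges --- the two omitted cycle edges and the $x-2$ remaining rungs --- meet each of the $2x$ vertices exactly once and so form a perfect matching of the prism. Carrying out the identical construction on the other two prisms, the union of the three Hamilton cycles is a $2x$-cycle factor and the union of the three residual matchings is a $1$-factor, as required. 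The only step that genuinely needs verification is this final incidence count confirming that the leftover edges of each prism form a matching; the remainder is routine bookkeeping.
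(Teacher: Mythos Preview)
Your argument is correct and follows the same overall strategy as the paper: recognise that $G[\Delta]$ consists of the six level-constant $x$-cycles $V_0,\dots,V_5$, pair them via the edges of $f_5$ into three vertex-disjoint prisms, and decompose each prism into a Hamilton $2x$-cycle plus a perfect matching. The only difference is cosmetic, in the particular prism decomposition chosen: the paper's $2x$-cycle (Figure~\ref{in+0}) zigzags, using \emph{all} $x$ rungs together with alternating cross edges from the two rails, so its residual $1$-factor (Figure~\ref{1factor}) consists entirely of cross edges; your $2x$-cycle instead runs along one rail and back along the other using only the two end rungs, so your $1$-factor is $x-2$ rungs plus two rail edges. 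Both are valid Hamilton decompositions of the prism, and the incidence check you flag is indeed routine.
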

\begin{proof}

\
If the entries in some row of $T$ are all $0$, then by Lemma~\ref{Lemma2}, $G[\Delta]$ consists of six components $c_1, c_2, \dots, c_6$, and each component is an $x$-cycle that uses all cross edges of differences zero. However, we want to form $2x$-cycles instead. For that purpose, we will also include the inside edges from $f_5$ given in Figure~\ref{1-factors} to form $2x$-cycles. 

As shown in Figure~\ref{in+0}, we use alternating edges from the cycles $c_1, c_2, \dots, c_6$ and the inside edges from $f_5$. %Note that the red and blue cycles are following the same $x$-cycle as we have used cross differences two and four on consecutive groups, but instead of having one edge we are replacing it with a cross difference zero and an inside edge from $f_5$. Similarly for the blue cycle, which is following the same $x$-cycle as we have used cross differences three. 
The leftover edges from the cycles $c_1, \dots, c_6$ produce a 1-factor as shown in Figure ~\ref{1factor}.

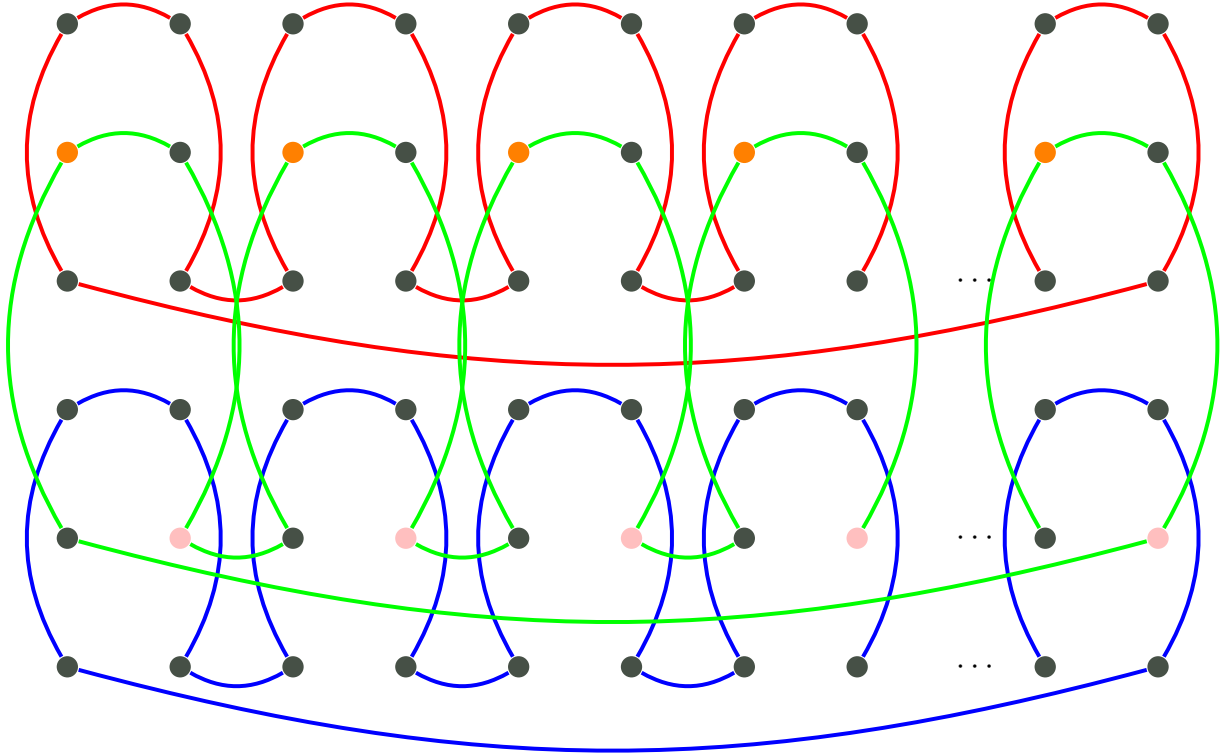
\begin{figure}[h!]
\centering
\definecolor{mygray}{RGB}{70,80,70}

\begin{tikzpicture}[thick,
  every node/.style={circle, inner sep=0.1cm},
  fsnode/.style={fill=mygray},
  gsnode/.style={fill=mygray},
  hsnode/.style={fill=mygray},
  isnode/.style={fill=mygray},
  jsnode/.style={fill=mygray},
  ksnode/.style={fill=mygray},
  lsnode/.style={fill=mygray},
  msnode/.style={fill=mygray},
  nsnode/.style={fill=mygray},
  osnode/.style={fill=mygray}
]

% the vertices of 1
\begin{scope}[start chain=going below,node distance=14mm]
  \node[fsnode,on chain] (f1) {};
  \node[fsnode,on chain] (f2) [fill=orange] {};
  \node[fsnode,on chain] (f3) {};
  \node[fsnode,on chain] (f4) {};
  \node[fsnode,on chain] (f5) {};
  \node[fsnode,on chain] (f6) {};
\end{scope}

% the vertices of 2
\begin{scope}[xshift=1.5cm,yshift=0cm,start chain=going below,node distance=14mm]
  \node[gsnode,on chain] (g1) {};
  \node[gsnode,on chain] (g2) {};
  \node[gsnode,on chain] (g3) {};
  \node[gsnode,on chain] (g4) {};
  \node[gsnode,on chain] (g5) [fill=pink] {};
  \node[gsnode,on chain] (g6) {};
\end{scope}

% the vertices of 3
\begin{scope}[xshift=3cm,yshift=0cm,start chain=going below,node distance=14mm]
  \node[hsnode,on chain] (h1) {};
  \node[hsnode,on chain] (h2) [fill=orange] {};
  \node[hsnode,on chain] (h3) {};
  \node[hsnode,on chain] (h4) {};
  \node[hsnode,on chain] (h5) {};
  \node[hsnode,on chain] (h6) {};
\end{scope}

% the vertices of 4
\begin{scope}[xshift=4.5cm,yshift=0cm,start chain=going below,node distance=14mm]
  \node[isnode,on chain] (i1) {};
  \node[isnode,on chain] (i2) {};
  \node[isnode,on chain] (i3) {};
  \node[isnode,on chain] (i4) {};
  \node[isnode,on chain] (i5) [fill=pink] {};
  \node[isnode,on chain] (i6) {};
\end{scope}

% the vertices of 5
\begin{scope}[xshift=6cm,yshift=0cm,start chain=going below,node distance=14mm]
  \node[jsnode,on chain] (j1) {};
  \node[jsnode,on chain] (j2) [fill=orange] {};
  \node[jsnode,on chain] (j3) {};
  \node[jsnode,on chain] (j4) {};
  \node[jsnode,on chain] (j5) {};
  \node[jsnode,on chain] (j6) {};
\end{scope}

% the vertices of 6
\begin{scope}[xshift=7.5cm,yshift=0cm,start chain=going below,node distance=14mm]
  \node[ksnode,on chain] (k1) {};
  \node[ksnode,on chain] (k2) {};
  \node[ksnode,on chain] (k3) {};
  \node[ksnode,on chain] (k4) {};
  \node[ksnode,on chain] (k5) [fill=pink] {};
  \node[ksnode,on chain] (k6) {};
\end{scope}

% the vertices of 7
\begin{scope}[xshift=9cm,yshift=0cm,start chain=going below,node distance=14mm]
  \node[lsnode,on chain] (l1) {};
  \node[lsnode,on chain] (l2) [fill=orange] {};
  \node[lsnode,on chain] (l3) {};
  \node[lsnode,on chain] (l4) {};
  \node[lsnode,on chain] (l5) {};
  \node[lsnode,on chain] (l6) {};
\end{scope}

% the vertices of 8
\begin{scope}[xshift=10.5cm,yshift=0cm,start chain=going below,node distance=14mm]
  \node[msnode,on chain] (m1) {};
  \node[msnode,on chain] (m2) {};
  \node[msnode,on chain] (m3) {};
  \node[msnode,on chain] (m4) {};
  \node[msnode,on chain] (m5) [fill=pink] {};
  \node[msnode,on chain] (m6) {};
\end{scope}

% the vertices of 9
\begin{scope}[xshift=13cm,yshift=0cm,start chain=going below,node distance=14mm]
  \node[nsnode,on chain] (n1) {};
  \node[nsnode,on chain] (n2) [fill=orange]{};
  \node[nsnode,on chain] (n3) {};
  \node[nsnode,on chain] (n4) {};
  \node[nsnode,on chain] (n5) {};
  \node[nsnode,on chain] (n6) {};
\end{scope}

% the vertices of 10
\begin{scope}[xshift=14.5cm,yshift=0cm,start chain=going below,node distance=14mm]
  \node[osnode,on chain] (o1) {};
  \node[osnode,on chain] (o2) {};
  \node[osnode,on chain] (o3) {};
  \node[osnode,on chain] (o4) {};
  \node[osnode,on chain] (o5) [fill=pink] {};
  \node[osnode,on chain] (o6) {};
\end{scope}

% the three dots

\node[right=of m3] {$\cdots$};
\node[right=of m5] {$\cdots$};
\node[right=of m6] {$\cdots$};

% the edges
\draw (f1) edge[red,line width= 1.5pt] [bend left] node {} (g1);
\draw (g1) edge[red,line width= 1.5pt] [bend left] node {} (g3);
\draw (g3) edge[red,line width= 1.5pt] [bend right] node {} (h3);
\draw (h3) edge[red,line width= 1.5pt] [bend left] node {} (h1);
\draw (h1) edge[red,line width= 1.5pt] [bend left] node {} (i1);
\draw (i1) edge[red,line width= 1.5pt] [bend left] node {} (i3);
\draw (i3) edge[red,line width= 1.5pt] [bend right] node {} (j3);
\draw (j3) edge[red,line width= 1.5pt] [bend left] node {} (j1);
\draw (j1) edge[red,line width= 1.5pt] [bend left] node {} (k1);
\draw (k1) edge[red,line width= 1.5pt] [bend left] node {} (k3);
\draw (k3) edge[red,line width= 1.5pt] [bend right] node {} (l3);
\draw (l3) edge[red,line width= 1.5pt] [bend left] node {} (l1);
\draw (l1) edge[red,line width= 1.5pt] [bend left] node {} (m1);
\draw (m1) edge[red,line width= 1.5pt] [bend left] node {} (m3);
\draw (n3) edge[red,line width= 1.5pt] [bend left] node {} (n1);
\draw (n1) edge[red,line width= 1.5pt] [bend left] node {} (o1);
\draw (o1) edge[red,line width= 1.5pt] [bend left] node {} (o3);
\draw (o3) edge[red,line width= 1.5pt] [bend left=15] node {} (f3);
\draw (f3) edge[red,line width= 1.5pt] [bend left] node {} (f1);
\draw (f4) edge[blue,line width= 1.5pt] [bend left] node {} (g4);
\draw (g4) edge[blue,line width= 1.5pt] [bend left] node {} (g6);
\draw (g6) edge[blue,line width= 1.5pt] [bend right] node {} (h6);
\draw (h6) edge[blue,line width= 1.5pt] [bend left] node {} (h4);
\draw (h4) edge[blue,line width= 1.5pt] [bend left] node {} (i4);
\draw (i4) edge[blue,line width= 1.5pt] [bend left] node {} (i6);
\draw (i6) edge[blue,line width= 1.5pt] [bend right] node {} (j6);
\draw (j6) edge[blue,line width= 1.5pt] [bend left] node {} (j4);
\draw (j4) edge[blue,line width= 1.5pt] [bend left] node {} (k4);
\draw (k4) edge[blue,line width= 1.5pt] [bend left] node {} (k6);
\draw (k6) edge[blue,line width= 1.5pt] [bend right] node {} (l6);
\draw (l6) edge[blue,line width= 1.5pt] [bend left] node {} (l4);
\draw (l4) edge[blue,line width= 1.5pt] [bend left] node {} (m4);
\draw (m4) edge[blue,line width= 1.5pt] [bend left] node {} (m6);
\draw (n6) edge[blue,line width= 1.5pt] [bend left] node {} (n4);
\draw (n4) edge[blue,line width= 1.5pt] [bend left] node {} (o4);
\draw (o4) edge[blue,line width= 1.5pt] [bend left] node {} (o6);
\draw (o6) edge[blue,line width= 1.5pt] [bend left=15] node {} (f6);
\draw (f6) edge[blue,line width= 1.5pt] [bend left] node {} (f4);
\draw (f2) edge[green,line width= 1.5pt] [bend left] node {} (g2);
\draw (g2) edge[green,line width= 1.5pt] [bend left] node {} (g5);
\draw (g5) edge[green,line width= 1.5pt] [bend right] node {} (h5);
\draw (h5) edge[green,line width= 1.5pt] [bend left] node {} (h2);
\draw (h2) edge[green,line width= 1.5pt] [bend left] node {} (i2);
\draw (i2) edge[green,line width= 1.5pt] [bend left] node {} (i5);
\draw (i5) edge[green,line width= 1.5pt] [bend right] node {} (j5);
\draw (j5) edge[green,line width= 1.5pt] [bend left] node {} (j2);
\draw (j2) edge[green,line width= 1.5pt] [bend left] node {} (k2);
\draw (k2) edge[green,line width= 1.5pt] [bend left] node {} (k5);
\draw (k5) edge[green,line width= 1.5pt] [bend right] node {} (l5);
\draw (l5) edge[green,line width= 1.5pt] [bend left] node {} (l2);
\draw (l2) edge[green,line width= 1.5pt] [bend left] node {} (m2);
\draw (m2) edge[green,line width= 1.5pt] [bend left] node {} (m5);
\draw (n5) edge[green,line width= 1.5pt] [bend left] node {} (n2);
\draw (n2) edge[green,line width= 1.5pt] [bend left] node {} (o2);
\draw (o2) edge[green,line width= 1.5pt] [bend left] node {} (o5);
\draw (o5) edge[green,line width= 1.5pt] [bend left=15] node {} (f5);
\draw (f5) edge[green,line width= 1.5pt] [bend left] node {} (f2);
\end{tikzpicture}
\caption{A $2x$-cycle factor formed from cross differences zero and inside edges from $f_5$.}
\label{in+0}
\end{figure}

\begin{figure}[h!]
\centering
\definecolor{mygray}{RGB}{70,80,70}

\begin{tikzpicture}[thick,
  every node/.style={circle, inner sep=0.1cm},
  fsnode/.style={fill=mygray},
  gsnode/.style={fill=mygray},
  hsnode/.style={fill=mygray},
  isnode/.style={fill=mygray},
  jsnode/.style={fill=mygray},
  ksnode/.style={fill=mygray},
  lsnode/.style={fill=mygray},
  msnode/.style={fill=mygray},
  nsnode/.style={fill=mygray},
  osnode/.style={fill=mygray}
]

% the vertices of 1
\begin{scope}[start chain=going below,node distance=14mm]
  \node[fsnode,on chain] (f1) {};
  \node[fsnode,on chain] (f2) {};
  \node[fsnode,on chain] (f3) {};
  \node[fsnode,on chain] (f4) {};
  \node[fsnode,on chain] (f5) {};
  \node[fsnode,on chain] (f6) {};
\end{scope}

% the vertices of 2
\begin{scope}[xshift=1.5cm,yshift=0cm,start chain=going below,node distance=14mm]
  \node[gsnode,on chain] (g1) {};
  \node[gsnode,on chain] (g2) {};
  \node[gsnode,on chain] (g3) {};
  \node[gsnode,on chain] (g4) {};
  \node[gsnode,on chain] (g5) {};
  \node[gsnode,on chain] (g6) {};
\end{scope}

% the vertices of 3
\begin{scope}[xshift=3cm,yshift=0cm,start chain=going below,node distance=14mm]
  \node[hsnode,on chain] (h1) {};
  \node[hsnode,on chain] (h2) {};
  \node[hsnode,on chain] (h3) {};
  \node[hsnode,on chain] (h4) {};
  \node[hsnode,on chain] (h5) {};
  \node[hsnode,on chain] (h6) {};
\end{scope}

% the vertices of 4
\begin{scope}[xshift=4.5cm,yshift=0cm,start chain=going below,node distance=14mm]
  \node[isnode,on chain] (i1) {};
  \node[isnode,on chain] (i2) {};
  \node[isnode,on chain] (i3) {};
  \node[isnode,on chain] (i4) {};
  \node[isnode,on chain] (i5) {};
  \node[isnode,on chain] (i6) {};
\end{scope}

% the vertices of 5
\begin{scope}[xshift=6cm,yshift=0cm,start chain=going below,node distance=14mm]
  \node[jsnode,on chain] (j1) {};
  \node[jsnode,on chain] (j2) {};
  \node[jsnode,on chain] (j3) {};
  \node[jsnode,on chain] (j4) {};
  \node[jsnode,on chain] (j5) {};
  \node[jsnode,on chain] (j6) {};
\end{scope}

% the vertices of 6
\begin{scope}[xshift=7.5cm,yshift=0cm,start chain=going below,node distance=14mm]
  \node[ksnode,on chain] (k1) {};
  \node[ksnode,on chain] (k2) {};
  \node[ksnode,on chain] (k3) {};
  \node[ksnode,on chain] (k4) {};
  \node[ksnode,on chain] (k5) {};
  \node[ksnode,on chain] (k6) {};
\end{scope}

% the vertices of 7
\begin{scope}[xshift=9cm,yshift=0cm,start chain=going below,node distance=14mm]
  \node[lsnode,on chain] (l1) {};
  \node[lsnode,on chain] (l2) {};
  \node[lsnode,on chain] (l3) {};
  \node[lsnode,on chain] (l4) {};
  \node[lsnode,on chain] (l5) {};
  \node[lsnode,on chain] (l6) {};
\end{scope}

% the vertices of 8
\begin{scope}[xshift=10.5cm,yshift=0cm,start chain=going below,node distance=14mm]
  \node[msnode,on chain] (m1) {};
  \node[msnode,on chain] (m2) {};
  \node[msnode,on chain] (m3) {};
  \node[msnode,on chain] (m4) {};
  \node[msnode,on chain] (m5) {};
  \node[msnode,on chain] (m6) {};
\end{scope}

% the vertices of 9
\begin{scope}[xshift=12cm,yshift=0cm,start chain=going below,node distance=14mm]
  \node[nsnode,on chain] (n1) {};
  \node[nsnode,on chain] (n2) {};
  \node[nsnode,on chain] (n3) {};
  \node[nsnode,on chain] (n4) {};
  \node[nsnode,on chain] (n5) {};
  \node[nsnode,on chain] (n6) {};
\end{scope}

% the vertices of 10
\begin{scope}[xshift=14.5cm,yshift=0cm,start chain=going below,node distance=14mm]
  \node[osnode,on chain] (o1) {};
  \node[osnode,on chain] (o2) {};
  \node[osnode,on chain] (o3) {};
  \node[osnode,on chain] (o4) {};
  \node[osnode,on chain] (o5) {};
  \node[osnode,on chain] (o6) {};
\end{scope}

% the three dots

\node[right=of n1] {$\cdots$};
\node[right=of n2] {$\cdots$};
\node[right=of n3] {$\cdots$};
\node[right=of n4] {$\cdots$};
\node[right=of n5] {$\cdots$};
\node[right=of n6] {$\cdots$};

% the edges
\draw (g1) edge[red,line width= 1.5pt] [bend left] node {} (h1);
\draw (i1) edge[red,line width= 1.5pt] [bend left] node {} (j1);
\draw (k1) edge[red,line width= 1.5pt] [bend left] node {} (l1);
\draw (m1) edge[red,line width= 1.5pt] [bend left] node {} (n1);
\draw (o1) edge[red,line width= 1.5pt] [bend left=15] node {} (f1);
\draw (g2) edge[red,line width= 1.5pt] [bend left] node {} (h2);
\draw (i2) edge[red,line width= 1.5pt] [bend left] node {} (j2);
\draw (k2) edge[red,line width= 1.5pt] [bend left] node {} (l2);
\draw (m2) edge[red,line width= 1.5pt] [bend left] node {} (n2);
\draw (o2) edge[red,line width= 1.5pt] [bend left=15] node {} (f2);
\draw (f3) edge[red,line width= 1.5pt] [bend left] node {} (g3);
\draw (h3) edge[red,line width= 1.5pt] [bend left] node {} (i3);
\draw (j3) edge[red,line width= 1.5pt] [bend left] node {} (k3);
\draw (l3) edge[red,line width= 1.5pt] [bend left] node {} (m3);
\draw (n3) edge[red,line width= 1.5pt] [bend left] node {} (o3);
\draw (g4) edge[red,line width= 1.5pt] [bend left] node {} (h4);
\draw (i4) edge[red,line width= 1.5pt] [bend left] node {} (j4);
\draw (k4) edge[red,line width= 1.5pt] [bend left] node {} (l4);
\draw (m4) edge[red,line width= 1.5pt] [bend left] node {} (n4);
\draw (o4) edge[red,line width= 1.5pt] [bend left=15] node {} (f4);
\draw (f5) edge[red,line width= 1.5pt] [bend left] node {} (g5);
\draw (h5) edge[red,line width= 1.5pt] [bend left] node {} (i5);
\draw (j5) edge[red,line width= 1.5pt] [bend left] node {} (k5);
\draw (l5) edge[red,line width= 1.5pt] [bend left] node {} (m5);
\draw (n5) edge[red,line width= 1.5pt] [bend left] node {} (o5);
\draw (f6) edge[red,line width= 1.5pt] [bend left] node {} (g6);
\draw (h6) edge[red,line width= 1.5pt] [bend left] node {} (i6);
\draw (j6) edge[red,line width= 1.5pt] [bend left] node {} (k6);
\draw (l6) edge[red,line width= 1.5pt] [bend left] node {} (m6);
\draw (n6) edge[red,line width= 1.5pt] [bend left] node {} (o6);
\end{tikzpicture}
\caption{A 1-factor formed from cross differences zero.}
\label{1factor}
\end{figure}
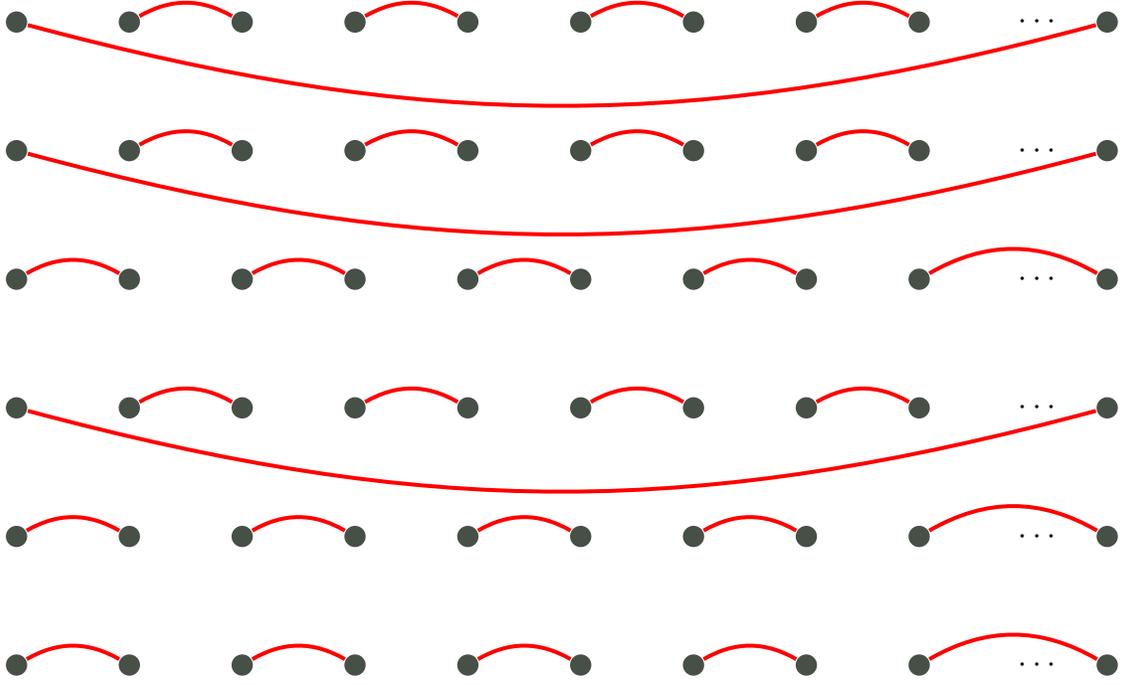

\end{proof}

\section{Building Modified Row Sum Matrices}

In this section, we will build $MRSM$s in a specific way. It will be important to index the columns of our matrices by the edges of a Hamilton cycle. In fact, if $c=(i_1, i_2, \ldots, i_n)$ is a $H$-cycle, then we will index the columns of our $MRSM$ by the edges $i_1i_2, i_2i_3, \ldots, i_ni_1$.

\

Let $C$ be a base cycle in a group $(G, +)$. For any $g \in G$, define $C + g = \{x + g : x \in C\}$. Any cycle $C + g$ is called a translate of $C$. Then, define the development of $C$ to be the collection of all $v$ translates of $C$. The following lemma is a variant of results given in \cite{BER} and \cite{CYLDEC}. 

\begin{lemma}
\label{2Kn}
$2 K_x$ can be decomposed into Hamilton-cycles for all $x$ even.
\end{lemma}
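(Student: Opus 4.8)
The plan is to give a $1$-rotational (Walecki-type) construction. Since $x$ is even, write $x=2m$ and take the vertex set of $2K_x$ to be $\mathbb{Z}_{2m-1}\cup\{\infty\}$, where translation acts on $\mathbb{Z}_{2m-1}$ and fixes $\infty$ (so $\infty+g=\infty$ for all $g$). A Hamilton-cycle decomposition of $2K_{2m}$ must consist of $2m-1=x-1$ cycles, since $2K_{2m}$ has $2m(2m-1)$ edges and each Hamilton cycle has $2m$ edges. Developing a single base Hamilton cycle over $\mathbb{Z}_{2m-1}$ produces exactly $2m-1$ translates, so the strategy is to exhibit one base cycle whose full development is precisely $2K_{2m}$ (the degenerate case $x=2$ being the trivial double edge).

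For the base cycle I would take the classical zig-zag Hamilton path on $\mathbb{Z}_{2m-1}$, namely $0,1,2m-2,2,2m-3,3,\dots$, whose successive signed steps are $+1,-2,+3,-4,\dots,\pm(2m-2)$, and then close it into a Hamilton cycle $C$ on all $2m$ vertices by joining its two endpoints to $\infty$. The key computation is that the step magnitudes $1,2,\dots,2m-2$ fold to genuine differences $\min\{k,\,2m-1-k\}\in\{1,\dots,m-1\}$, so that each difference $1,2,\dots,m-1$ is produced exactly twice among the $2m-2$ inside edges of $C$; the remaining two edges of $C$ are incident with $\infty$.

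Finally I would verify that the full development of $C$ over $\mathbb{Z}_{2m-1}$ covers each edge of the underlying $K_{2m}$ with multiplicity exactly two, i.e.\ that it is $2K_{2m}$. For an inside difference $d$, the orbit of difference-$d$ edges under $\mathbb{Z}_{2m-1}$ has size $2m-1$; since $C$ contains exactly two edges of difference $d$, the $2m-1$ translates contribute $2(2m-1)$ edge-slots spread uniformly over these $2m-1$ edges, giving multiplicity $2$ on each. Likewise the two $\infty$-edges of $C$ develop to cover each of the $2m-1$ edges incident with $\infty$ exactly twice. Since each translate is a genuine simple Hamilton cycle and the total multiplicity on every edge is $2$, the $2m-1$ translates partition the edge-multiset of $2K_{2m}$, as required. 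The main obstacle—indeed essentially the only nontrivial point—is the difference count establishing that the chosen base cycle uses each difference exactly twice; once that is in hand, the remainder is the standard orbit-counting bookkeeping for $1$-rotational developments described in Section~5.
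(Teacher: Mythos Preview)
Your proposal is correct and is essentially the same $1$-rotational Walecki construction the paper uses: the paper takes vertex set $\mathbb{Z}_{x-1}\cup\{\infty\}$, the zig-zag base cycle $(\infty,0,x-2,1,x-3,\ldots,\tfrac{x}{2},\tfrac{x-2}{2})$, and develops it mod $x-1$, verifying exactly as you do that each finite difference and each $\infty$-edge appears twice. Your path $0,1,2m-2,2,\ldots$ is the mirror image of the paper's $0,x-2,1,x-3,\ldots$, so the two arguments coincide up to relabeling.
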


\begin{proof}
Let $2 K_x$ consist of the point set $\mathbb{Z}_{x-1} \cup \{\infty\}$, and consider the following base Hamilton cycle:

$$\left(\infty, 0, x-2, 1, x-3, 2, x-4, \cdots , \frac{x}{2}, \frac{x-2}{2} \right).$$ If we develop this cycle $\pmod{x-1}$, we will obtain $x-2$ more Hamilton cycles. We must show that these cycles decompose $2 K_x$.

\vspace{5mm}

%Let's talk about Lemma~\ref{2Kn}

It is clear that each edge $\{\infty, i\}$ for $i=\{0,1, \cdots, x-2\}$ occurs twice, since we are developing $\{\infty, 0\}$  and $\{\frac{x-2}{2}, \infty\}$ $(\hbox{mod} \ x-1)$. Notice that the differences covered by the base cycle are $\pm 1, \pm 2, \cdots , \pm \frac{x-2}{2}$ in the first half starting from 0, while differences $\pm \frac{x-2}{2}, \cdots, \pm 2, \pm 1 $ are covered in the second half. This shows that every difference occurs twice in the base cycle and therefore, the $x-1$ cycles obtained decompose $2 K_x$ into Hamilton-cycles.
\end{proof}

\

By applying the proof of Lemma \ref{2Kn}, we list the resulting $x-1$ Hamilton cycles that decompose $2 K_x$:
\begin{align*}
C_0 &=\left(\infty, 0, x-2, 1, x-3, 2, x-4, \cdots , \frac{x}{2}, \frac{x-2}{2} \right) \\
C_1 &=\left(\infty, 0+1, x-2+1, 1+1, x-3+1, 2+1, x-4+1, \cdots , \frac{x}{2}+1, \frac{x-2}{2}+1 \right) \\
&\vdots \\
C_{k} &=\left(\infty, 0+k, x-2+k, 1+k, x-3+k, 2+k, x-4+k, \cdots , \frac{x}{2}+k, \frac{x-2}{2}+k \right) \\
&\vdots \\
C_{x-2} &=\left(\infty, 0+(x-2), x-2+(x-2), 1+(x-2), \cdots , \frac{x}{2}+(x-2), \frac{x-2}{2}+(x-2) \right).
\end{align*}

\

Let $C_0,C_1, \ldots, C_{x-2}$ correspond to a set of $x-1$ $MRSM_{\mathbb Z_6}(S,3,x;\Sigma)$. We define an ordering of these matrices as follows. Let $T_0$ be the matrix corresponding to $C_0$, where the columns of $T_0$ are indexed by the edges of $C_0$.

\begin{center}
$T_0=$
\begin{tabular}{ |c|c|c|c|c| } 
\hline
$(\infty,0)$ & $(0,x-2)$ & $(x-2,1)$ & $\hdots$ & $(\frac{x-2}{2},\infty)$ \\  
 \hline
  &  &  &  & \\ 
 \hline
  &  &  &  & \\ 
 \hline
  &  &  &  &\\ 
 \hline
\end{tabular}
\end{center}

\

Then, we can use the following notation to say that $T_0$ corresponds to $C_0$:
$$T_0\rightarrow C_0$$

\

Now, order the remaining $x-2$ matrices as follows:

$$T_j \rightarrow C_0 + j\bigg(\frac{x-2}{2}\bigg)=C_{j(\frac{x-2}{2})}$$

Notice that $T_j=T_{j-1}+\frac{x-2}{2} \pmod{x-1}$ since $T_j \rightarrow C_{j(\frac{x-2}{2})}$, and

\begin{equation} \label{eq1}
\begin{split}
T_{j-1}+\frac{x-2}{2} & \rightarrow C_{(j-1)(\frac{x-2}{2})} + \bigg( \frac{x-2}{2}\bigg) \\
 & = C_0 + (j-1)\bigg(\frac{x-2}{2}\bigg) + \bigg( \frac{x-2}{2}\bigg) \\
 & = C_0 + j \bigg(\frac{x-2}{2}\bigg) -\bigg(\frac{x-2}{2}\bigg) + \bigg( \frac{x-2}{2}\bigg) \\
 & = C_0 + j \bigg(\frac{x-2}{2}\bigg) \\
 & = C_{j(\frac{x-2}{2})}.
\end{split}
\end{equation}

Let $t_{j,l}$ represent the index of column $l$ in matrix $T_j$ for $l= 1,2,\ldots,x$. We define the {\it middle columns} of matrix $T_j$ as the column indices that exclude $t_{j,1}$ and $t_{j,x}$.

\begin{lemma}
\label{Middle columns}
Suppose $x$ is even. For any $T_j \in (T_0, T_1, \ldots, T_{x-2})$, if $l \in \{2,4, \cdots, x-2\}$, then $t_{j,l}=t_{j+1,x-l+1}$.
\end{lemma}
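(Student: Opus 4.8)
The plan is to reduce the claimed column-index identity to a single $j$-independent identity about the base Hamilton cycle $C_0$, and then to verify that identity by reading off the vertices of $C_0$ from its definition. Writing $s=\frac{x-2}{2}$ and letting $v_p$ denote the vertex in position $p$ of $C_0$ (so $v_1=\infty$), recall that the column index $t_{j,l}$ is exactly the $l$-th edge $\{v_l+js,\,v_{l+1}+js\}$ of $C_{js}=C_0+js$. Because $C_{(j+1)s}=C_{js}+s$ as established in (\ref{eq1}), translation by $s$ sends the $m$-th edge of $C_{js}$ to the $m$-th edge of $C_{(j+1)s}$, so $t_{j+1,\,x-l+1}=\{v_{x-l+1}+(j+1)s,\,v_{x-l+2}+(j+1)s\}$. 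Since the middle columns never meet the fixed vertex $\infty$ (that vertex lies only in columns $1$ and $x$), I can translate both edges by $-js$ inside $\mathbb Z_{x-1}$, so that proving the lemma amounts to establishing the single identity
$$\{v_l,\,v_{l+1}\}=\{v_{x-l+1}+s,\,v_{x-l+2}+s\}\pmod{x-1}\qquad (l\in\{2,4,\ldots,x-2\}),$$
which no longer involves $j$.

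To settle this identity, I would first record the explicit vertices of $C_0$: from the stated pattern, even positions satisfy $v_{2k}=k-1$ and odd positions satisfy $v_{2k+1}=x-1-k$. Writing the even column as $l=2k$, the left-hand edge is then $\{k-1,\,x-1-k\}$. On the right, $x-2k+1$ is an odd position and $x-2k+2$ an even position, which lets me evaluate $v_{x-2k+1}$ and $v_{x-2k+2}$ explicitly, add $s=\frac{x}{2}-1$ to each, and reduce modulo $x-1$; the short computation then collapses the right-hand edge to $\{k-1,\,x-1-k\}$ as well. I would finish by remarking that, since the identity is independent of $j$, it also covers the wrap-around $j=x-2$ once one reads $T_{x-1}$ as $T_0$ (legitimate because $2s\equiv -1\pmod{x-1}$ forces $\gcd(s,x-1)=1$, so $s$ generates $\mathbb Z_{x-1}$).

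No single step is conceptually hard; the work is entirely bookkeeping, and that is where I expect the only real care to be needed. Specifically, one must track the parities of the four positions $l,\,l+1,\,x-l+1,\,x-l+2$ so as to apply the correct even/odd vertex formula to each, confirm that restricting to $l\in\{2,4,\ldots,x-2\}$ keeps $k$ in the range $1\le k\le\frac{x-2}{2}$ so that even the extreme middle columns $l=2$ and $l=x-2$ stay away from $\infty$, and carry out the modular reduction $x-2+k\equiv k-1\pmod{x-1}$ without slips. Once these routine checks are in place, the displayed identity yields $t_{j,l}=t_{j+1,\,x-l+1}$ and the lemma follows.
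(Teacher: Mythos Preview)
Your proposal is correct and follows essentially the same route as the paper: both arguments read off the explicit position formulas $v_{2k}=k-1$, $v_{2k+1}=x-1-k$ for the base cycle $C_0$ and verify the column identity by direct computation modulo $x-1$. Your preliminary translation by $-js$ to strip out the $j$-dependence is a mild streamlining of what the paper does (the paper carries $j$ through equations (2)--(7) and collapses it at the end via $jx\equiv j\pmod{x-1}$), but the substance is identical.
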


\begin{proof}
We know that each matrix, $T_j$, corresponds to the cycle $C_{j(\frac{x-2}{2})}$ that was constructed in Lemma \ref{2Kn}. Because the cycles were built by using $\mathbb{Z}_{x-1} \cup \{\infty\}$, all calculations are done $\pmod{x-1}$. Without loss of generality, suppose that $C_{j(\frac{x-2}{2})}=(c_{j(\frac{x-2}{2}),1}, c_{j(\frac{x-2}{2}),2}, \ldots, c_{j(\frac{x-2}{2}),x})$, where $c_{j(\frac{x-2}{2}),l}$ represents the $l$-th entry of the $j\bigg(\frac{x-2}{2}\bigg)$-th cycle, and the $l$-th edge of this cycle is defined by

$$\{c_{j(\frac{x-2}{2}),l}, c_{j(\frac{x-2}{2}),l+1}\}= t_{j,l}.$$

Because $T_{j+1}$ corresponds to $C_{(j+1)(\frac{x-2}{2})}$, the $x-l+1$-th column index from $T_{j+1}$, is defined by the edge

$$\{c_{(j+1)(\frac{x-2}{2}),x-l+1}, c_{(j+1)(\frac{x-2}{2}),x-l+2}\}=t_{j+1,x-l+1}.$$

Thus, we want to show that 

\begin{equation}
    c_{j(\frac{x-2}{2}),l}=c_{(j+1)(\frac{x-2}{2}),x-l+1, \ \ \hbox{and}} 
\end{equation}

\begin{equation}
    c_{j(\frac{x-2}{2}),l+1}=c_{(j+1)(\frac{x-2}{2}),x-l+2.}
\end{equation}

Notice that since 
$$C_0 + (j+1)\bigg(\frac{x-2}{2}\bigg)=C_{(j+1)(\frac{x-2}{2})},$$ 
it follows that 
$$c_{(j+1)(\frac{x-2}{2}),x-l+1}= c_{0,x-l+1}+(j+1)\left(\frac{x-2}{2}\right),$$ 
and 
$$c_{(j+1)(\frac{x-2}{2}),x-l+2}= c_{0,x-l+2}+(j+1)\left(\frac{x-2}{2}\right).$$  

 By referring to the base cycle, $C_0$, we have the following. If $k$ is even, then $c_{0,k}=k-\frac{k+2}{2}$ and if $k$ is odd ($k \geq 3$), then $c_{0,k}=x-\frac{k+1}{2}$. By construction of that cycle, observe that if $l \in \{2,4, \cdots, x-2\}$, then we have that the left hand side of (2) is as follows because $k=l$ is even.

\begin{equation}
\begin{split}
c_{j(\frac{x-2}{2}),l} & = l-\left(\frac{(l+2}{2}\right)+j\left(\frac{x-2}{2}\right) \\
 & = \frac{2l-l-2+jx-2j}{2} \\
 & = \frac{l+j-2j-2}{2} \\
 & = \frac{l-j-2}{2}.
\end{split}
\end{equation}

Notice that $x \equiv 1 \pmod{x-1}$, which implies that $2x \equiv 2 \pmod{x-1}$, and consequently $jx \equiv j \pmod{x-1}$.

\

Also, the right hand side of (2) is as follows because $k=x-l+1$ is odd.

\begin{equation}
\begin{split}
c_{(j+1)(\frac{x-2}{2}),x-l+1} &= c_{0,x-l+1}+(j+1)\left(\frac{x-2}{2}\right) \\
 & = x-\left(\frac{(x-l+1)+1}{2}\right)+(j+1)\left(\frac{x-2}{2}\right) \\
 & = \frac{2x-x+l-2+jx-2j+x-2}{2} \\
 & = \frac{(j+2)x+l-2j-4}{2} \\
 & = \frac{j+2+l-2j-4}{2} \\
 & = \frac{l-j-2}{2}.
\end{split}
\end{equation}

Therefore, 
$$c_{j(\frac{x-2}{2}),l}=c_{(j+1)(\frac{x-2}{2}),x-l+1}.$$

So equation (2) is proved.

\

Similarly, the left hand side of (3) is as follows because $k=l+1$ is odd.

\begin{equation}
\begin{split}
c_{j(\frac{x-2}{2}),l+1} & = x-\left(\frac{(l+1)+1}{2}\right)+j\left(\frac{x-2}{2}\right) \\
 & = \frac{2x-l-2+jx-2j}{2} \\
 & = \frac{(j+2)x-l-2j-2}{2} \\
 & = \frac{j+2-l-2j-2}{2} \\
 & = \frac{-l-j}{2}.
\end{split}
\end{equation}

The right hand side of (3) is as follows because $k=x-l+2$ is even.
\begin{equation}
\begin{split}
c_{(j+1)(\frac{x-2}{2}),x-l+2} &= c_{0,x-l+2}+(j+1)\left(\frac{x-2}{2}\right) \\
 & = (x-l+2)-\left(\frac{(x-l+2)+2}{2}\right)+(j+1)\left(\frac{x-2}{2}\right) \\
 & = \frac{2x-2l+4-x+l-4+jx-2j+x-2}{2} \\
 & = \frac{2x+jx-l-2j-2}{2} \\
 & = \frac{(j+2)x-l-2j-2}{2} \\
 & = \frac{j+2-l-2j-2}{2} \\
 & = \frac{-l-j}{2}.
\end{split}
\end{equation}

Therefore, 
$$c_{j(\frac{x-2}{2}),l+1}=c_{(j+1)(\frac{x-2}{2}),x-l+2}.$$

So equation (3) is proved.

\end{proof}

\

Consider the $x-1$ Hamilton cycles that decompose $2K_x$, listed after the proof of Lemma \ref{2Kn} as $C_0, C_1, \ldots, C_{x-2}$. These cycles correspond to the $x-1$ $MRSM$s that we aim to construct, with the columns of each matrix indexed by the edges of the cycle it corresponds to. We order the tables by $(T_0, T_1, \ldots, T_{x-2})$ according to the discussion preceding Lemma \ref{Middle columns}. Then we have the following.

\begin{lemma}
\label{Tables1a}
Let $x \equiv 4,8 \pmod{12}$.  There exists an ordered set of $x-1$ $MRSM_{\mathbb Z_6}(S,3,x;\Sigma)$, $(T_0, T_1, \ldots T_{x-2})$, such that:
\begin{enumerate}
\item For each matrix $T_0,T_1, \ldots, T_{x-3}$, $d \equiv 3 \pmod{6}$ for each $d \in \Sigma$.
\item In $T_{x-2}$, $d \equiv 3 \pmod{6}$ for exactly two $d \in \Sigma$, and $d \equiv 0,2,$ or $4 \pmod{6}$ for one $d \in \Sigma$.
\item Among the set of matrices, there are two columns indexed by $\{u,v\}$, with $\{u,v\} \in E(K_{x})$. The union of these two columns is $Z_6$.
\end{enumerate}
\end{lemma}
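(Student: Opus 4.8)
The plan is to fill in the entries of the $x-1$ matrices column by column, using the Hamilton cycle decomposition of $2K_x$ from Lemma~\ref{2Kn} to fix the column indices and Lemma~\ref{Middle columns} to control which columns share an edge of $K_x$. Recall that each $T_j$ is a $3\times x$ matrix over $\mathbb{Z}_6$, that a row summing to $d\pmod 6$ produces, via Lemma~\ref{Lemma2}, exactly $\gcd(d,6)$ cycles in $G[\Delta]$, and that we want (almost) every row sum to be $\equiv 3\pmod 6$ so that each such row yields three $(2x)$-cycles. Thus the task reduces to assigning cross differences in $\mathbb{Z}_6$ to the $3x$ cells of each $T_j$.

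First I would record the parity obstruction that drives the construction. A row sum $\equiv 3\pmod 6$ is odd, while every row has $x$ entries and $x$ is even; hence each row must contain an odd number of odd entries. I would therefore use only two column types: an \emph{odd column}, a permutation of $\{1,3,5\}$, which deposits exactly one odd entry in each of the three rows, and an \emph{even column}, a permutation of $\{0,2,4\}$, which deposits one even entry in each row. If a matrix uses $a$ odd columns then every row contains exactly $a$ odd entries, so I would take $a$ odd. The aggregate check works out, since the three row sums total $9a+6(x-a)\equiv 3a\equiv 3\pmod 6$ when $a$ is odd, consistent with three rows each $\equiv 3$.

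Next I would balance the even contributions so each individual row sum is exactly $\equiv 3\pmod 6$. Taking a single odd column contributing $1,3,5$ to the three rows, the rows then need their even-column totals to be $\equiv 2,0,4\pmod 6$ respectively; these targets sum to $0$ and the even columns contribute $6(x-1)\equiv 0$, so the system is consistent, and the hypothesis $4\mid x$ supplies enough even columns to realize the targets by a fixed periodic assignment of the permutations of $\{0,2,4\}$. Doing this in $T_0,\dots,T_{x-3}$ gives condition~1. For condition~3, I would invoke Lemma~\ref{Middle columns} to select two columns indexed by a common edge $\{u,v\}$ (for instance $t_{0,2}=t_{1,x-1}$) and make one of them the odd column $\{1,3,5\}$ and the other the even column $\{0,2,4\}$; their union is exactly $\mathbb{Z}_6$, and since both are pure-type columns this choice leaves the row-sum scheme intact.

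Finally, condition~2 is the forced remainder on $T_{x-2}$. If the construction must cover all six cross differences on each edge of $K_x$, so that the two columns indexing every edge partition $\mathbb{Z}_6$, then the grand total of all entries is $15\binom{x}{2}\equiv 0\pmod 6$ for $x\equiv 4,8\pmod{12}$, whereas forcing all $3(x-1)$ rows to be $\equiv 3$ would make the grand total $\equiv 3\pmod 6$; the discrepancy of $3$ means exactly one row cannot be $\equiv 3$, and I would place it in $T_{x-2}$, arranging its entries so that it sums to $\equiv 0$ (permissibly $\equiv 0,2,4$) $\pmod 6$. The main obstacle is the middle step: exhibiting one explicit, uniform filling, valid for every $x\equiv 4,8\pmod{12}$, that simultaneously respects the distinct-entries-per-column constraint, the three per-row residue conditions (a global constraint working against the parity obstruction), and the complementary-pair requirement. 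I would resolve this by writing the periodic odd/even column pattern and its permutations explicitly, verifying the base cases $x=4$ and $x=8$, and checking that appending a block of twelve columns preserves every residue.
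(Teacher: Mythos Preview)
Your high-level counting is sound, but the concrete scheme you commit to cannot satisfy condition~2, and you are also reading condition~3 too weakly.

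On condition~3: the intended statement is universal, not existential. For \emph{every} edge $\{u,v\}\in E(K_x)$ the two columns carrying that index (one in some $T_j$, the other in $T_{j+1}$ by Lemma~\ref{Middle columns}) must together exhaust $\mathbb{Z}_6$. This is what the application (Theorem~\ref{1-gcd=2}) needs in order to cover every cross edge of $K_{(6:x)}$ exactly once. You seem to suspect this in your last paragraph, but your plan in the third paragraph (``select two columns \dots\ and make one of them the odd column'') treats it as a single local choice.

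On condition~2, there is a parity obstruction you have not noticed in your own scheme. If every column of a matrix is either a permutation of $\{1,3,5\}$ or of $\{0,2,4\}$, then each row receives exactly one entry from each column, and the parity of that entry is determined by the column type. Hence with $a$ odd columns, \emph{every} row of that matrix has exactly $a$ odd entries and row sum $\equiv a\pmod 2$. All three row sums of a pure odd/even matrix therefore share the same parity. But condition~2 demands that $T_{x-2}$ have two rows summing to $3\pmod 6$ (odd) and one row summing to an even residue. This is impossible under your column dichotomy, and any local fix in $T_{x-2}$ that mixes parities within a column will break the $\{0,2,4\}\cup\{1,3,5\}$ complementarity you are relying on for condition~3.

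The paper avoids this by never using pure odd/even columns. The middle columns are $(0,1,2)^T$ and $(0,2,1)^T$ (each with one odd entry, but in different rows), and the end columns are mixed vectors such as $(0,2,5)^T$ and $(3,4,1)^T$. Complementarity across an edge is achieved not by odd/even pairing but by a recursive rule: column $l$ of $T_{k}$ is obtained from column $x+1-l$ of $T_{k-1}$, sometimes shifted by $3$, so that (via Lemma~\ref{Middle columns}) the two columns indexed by the same edge are $(a,b,c)^T$ and $(a+3,b+3,c+3)^T$. This ``add $3$'' mechanism is what simultaneously gives the $\mathbb{Z}_6$ union on every edge and leaves enough freedom to adjust $T_{x-2}$ row by row (step~5 of the paper's algorithm) to meet condition~2.
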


\begin{proof}
Let $2K_x$ consists of the point set $\mathbb{Z}_{x-1} \cup \{\infty\}$ and consider the base Hamilton cycle $C_0= \left(\infty, 0, x-2, 1, x-3, 2, x-4, \cdots , \frac{x}{2}, \frac{x-2}{2} \right)$. By Lemma \ref{2Kn}, $2K_x$ can be decomposed into $x$-cycles by developing this Hamilton cycle$\pmod{x-1}$ producing the cycles $C_0, C_1, \ldots, C_{x-2}$. As described in the discussion before Lemma \ref{Middle columns}, we will construct the $x-1$ matrices so that $T_j$ corresponds to cycle $C_{j(\frac{x-2}{2})}$, for $j=0,1,2, \ldots,x-2$. The columns of matrix $T_j$ are indexed by the edges of cycle $C_{j(\frac{x-2}{2})}$. Therefore, among the set of matrices, we see every pair $\{u,v\}$ exactly twice among the set of column indices. We now describe how to fill the columns for each table. We will not pay attention to the way the columns are indexed, and simply describe the way the columns of each table are filled. Therefore we will refer to the columns as column 1, column 2, ..., column $x$.

\begin{center}
    {\bf Algorithm for filling the $x-1$ matrices $T_j$:}
\end{center}

\begin{itemize}
    \item Matrix $T_0$
    \begin{enumerate}
        \item Fill column 1 with $(0,2,5)^T$.
        \item Fill column $i$, $i\in \{2, 4,\ldots x-2\}$, with $(0,1,2)^T$.
        \item Fill column $i$, $i\in \{3, 5,\ldots x-1\}$, with $(0,2,1)^T$.
        \item Fill column $x$ with $(3,4,1)^T$.
    \end{enumerate}
    \item Matrices $T_k$, $k \in \{1,2, \dots x-3 \}$.
    \begin{enumerate}
        \item Fill column 1 in $T_k$ with $(a+3,b+3,c+3)^T$, where $(a,b,c)^T$ is column $x$ from $T_{k-1}$.
        \item Fill column $i$ in $T_k$, $i\in \{3, 5,\ldots x-1\}$ with $(a+3,b+3,c+3)^T$, where $(a,b,c)^T$ is column ${x+1-i}$ from $T_{k-1}$.
        \item Fill column $i$ in $T_k$, $i\in \{2, 4,\ldots x-2\}$, with $(a,b,c)^T$, where $(a,b,c)^T$ is column $x+1-i$ from $T_{k-1}$.
        \item Fill column $x$ with $(a,b,c)^T$, where $(a,b,c)^T$ is column 1 from $T_{k-1}$.
    \end{enumerate}
    \item Matrix $T_{x-2}$.
    \begin{enumerate}
       \item Fill column 1 in $T_{x-2}$ with $(a+3,b,c)^T$, where $(a,b,c)^T$ is column $x$ from $T_{x-3}$.
        \item Fill column $i$ in $T_{x-2}$, $i\in \{3, 5,\ldots x-1\}$ with $(a+3,b+3,c+3)^T$, where $(a,b,c)^T$ is column $x+1-i$ from $T_{x-3}$.
        \item Fill column $i$ in $T_{x-2}$, $i\in \{2, 4,\ldots x-2\}$ with $(a,b,c)^T$, where $(a,b,c)^T$ is column $x+1-i$ from $T_{x-3}$.
        \item Fill column $x$ in $T_{x-2}$ with $(a+3,b+4,c+4)^T$, where $(a,b,c)^T$ is column 1 from $T_{x-3}$.
        \item Replace the entries of any two consecutive columns $i$ and $(i+1)$ in $T_{x-2}$, where
        
        $i\in \{2, 4, 6,\ldots x-2\}$, as follows:
        $$\hbox{Column} \ i: (a,b,c)^T \rightarrow (a+1,b+1,c-2)^T$$
        $$\hbox{Column} \ i+1: (a,b,c)^T \rightarrow (a+1,b-2,c+1)^T$$
    \end{enumerate}
\end{itemize}

\vspace{5mm}
{\bf Proof of item 1.}

\vspace{0.5mm}

First we will prove that each row of $T_0$ satisfies the condition. Then, we will prove by induction on $x$ that each row of the next $x-3$ matrices also satisfy the condition.

\

Note that in the first row of $T_0$, columns 1 and $x$ are filled with $(0,2,5)^T$ and $(3,4,1)^T$ respectively. The reminder of the first row is filled with zeros, so the sum is 3. In the second row, the first entry was filled with 2, the last entry was filled with 4; and half of the middle columns are filled with 1, while the other half are filled with 2. Since $\frac{x-2}{2}$ is odd when $x \equiv 4,8 \pmod{12}$, we are adding $1+2=3$ an odd number of times. Thus, the sum of the second row is $d\equiv 3 \ \ \hbox{(mod 6)}$. Similarly for the third row, the first entry was filled with 5, the last entry was filled with 1,  and the middle entries alternate between 1 and 2. Thus, the sum of the entries of the third row is $d\equiv 3 \pmod{6}$.

\
    
We wish to show that the entries in each row of matrices $T_1, T_2, \ldots T_{x-3}$ sum to $d\equiv 3 \pmod{6}$. We will prove this by induction on the number of matrices.
    
\

For the base case, we will show that $T_1$ satisfies the condition in item 1. We have already shown that $T_0$ satisfies the condition, and by the algorithm, half of the middle columns (an odd number) from $T_0$ are repeated in $T_1$; and the other half are given by adding 3 to the same set of columns from $T_0$ in a different order. Column 1 of $T_1$ is obtained by adding $3$ to column $x$ from $T_0$, and column $x$ of $T_1$ is equals to column 1 from $T_0$. Thus we have added 3 to the columns of $T_0$ an even number of times, so the sum of each row of $T_1$ is congruent to the sum of each row of $T_0 \pmod{6}$. Thus $T_1$, satisfies the condition.

Assuming $T_{k-1}$ satisfies the condition, then by the same argument, any matrix \
$T_k$, $k \in \{2, \ldots, x-3\}$, also satisfies the condition. Therefore, item 1 holds, by induction.
    
\

{\bf Proof of item 2.}

Let $R_k^j$ represent the sum of row $k$ in matrix $j$. According to the algorithm, at the end of step 4, we have $R_1^{x-2} \equiv R_1^{x-3} +3 \pmod{6}$, $R_2^{x-2} \equiv R_2^{x-3} +1 \pmod{6}$, and $R_3^{x-2} \equiv R_3^{x-3} +1 \pmod{6}$. Then after step 5, we get $R_1^{x-2} \equiv R_1^{x-3} +5 \pmod{6} \equiv 2 \pmod{6}$, $R_2^{x-2} \equiv R_2^{x-3} \equiv 3 \pmod{6}$, and $R_3^{x-2} \equiv R_3^{x-3} \equiv 3 \pmod{6}$. Therefore item 2 holds.

\

{\bf Proof of item 3.}
    
We now refer to the way the columns of each table are indexed. As in Lemma \ref{Middle columns}, we define $t_{j,l}$ to be the index of column $l$ in table $T_j$ for $l \in \{1,2, \ldots, x\}$ and $j \in \{0,1, \ldots, x-2\}$, where all calculations on $j$ are done $\pmod{x-1}$.

Suppose $t_{j,l}$ is a middle column, and $t_{j,l}=(u,v)$. Then it is clear that $u,v \neq \infty$. According to the algorithm, the entries in column $t_{j,l} \in \{(0,1,2)^T, (0,2,1)^T, (3,4,5)^T, (3,5,4)^T\}$. Also, if the entries in column $t_{j,l}$ are $(a,b,c)^T$, then it follows from the algorithm that the entries in column $t_{j+1,x+1-l}$ are $(a+3, b+3, c+3)^T$. By Lemma \ref{Middle columns}, we have that $t_{j,l}=t_{j+1, x+1-l}$; therefore, the union of the entries in the columns indexed by $(u,v)$ is $\{0,1,2,3,4,5\}$.

When $j=x-2$, we must also consider the entries in the columns affected by step 5 of the algorithm. So consider any pair of consecutive columns of $T_j$; $t_{j,l}$ and $t_{j,l+1}$, $l \in \{2,4,6, \ldots, x-2\}$. Because $x-2 \equiv 2 \ \hbox{or } 6 \pmod{12}$, it follows that $x-2 \equiv 2 \pmod{4}$, which implies $\frac{x-2}{2}$ is odd. Therefore, according to the algorithm, the entries in column $t_{j,l}$ and $t_{j,l+1}$ before step 5 are $(3,4,5)^T$ and $(3,5,4)^T$ respectively, since we added 3 to $(0,1,2)^T$ and $(0,2,1)^T$ an odd number of times. However, after step 5, the entries of these columns become $(4,5,3)^T$ and $(4,3,5)^T$ respectively. So while the order of the entries change, the set of entries remains the same. Thus the union of the entries in the columns indexed by $(u,v)$ in $T_{x-2}$ is again $\{0,1,2,3,4,5\}$.

\

We now turn to the case when $u=\infty$ or $v=\infty$. For this case, we focus on columns from the matrices that are on the ends, i.e, $t_{j,1}$ and $t_{j,x}$. For any $j \in \{0, \ldots,x-2\}$, we have that $T_j \rightarrow C_{j\big(\frac{x-2}{2} \big)}$, so $t_{j,x}=((j+1) \big(\frac{x-2}{2}\big), \infty)$. Also, since $T_{j+1} \rightarrow C_{(j+1) \big(\frac{x-2}{2} \big)}$, we have $t_{(j+1),1}=(\infty, (j+1) \big(\frac{x-2}{2}\big))$. Recall that if $(u,v)$ is a column index in some $MRSM$ $T$, then any entry, $d$, in column $(u,v)$ represents the cross difference, $d$, from $G_u$ to $G_v$. If $(u,v)$ is a column indexed in some difference table $T'$, then any entry, $d'$, in column $(v,u)$ represents the cross difference $d$ from $G_v$ to $G_u$. However, we can also represent this difference as the cross difference $-d'$ from $G_u$ to $G_v$. Therefore, the entries in column $t_{j+1,1}$ represent cross differences from $G_{\infty}$ to $G_{(j+1) \big(\frac{x-2}{2} \big)}$ or equivalently, the negatives of these entries represent cross differences from $G_{(j+1) \big(\frac{x-2}{2} \big)}$ to $G_{\infty}$.

By the algorithm, the entries in column $t_{j,x}$ are from the set $\{(3,4,1)^T, (0,2,5)^T, (0,1,4)^T, \\ (3,5,2)^T\}$, and if column $t_{j,x}$ contains entries $(a,b,c)^T$, then column $t_{j+1,1}$ contains entries \\ $(a+3,b+3,c+3)^T$. The following table gives the possibilities of the entries in column \\ $((j+1) \big(\frac{x-2}{2}\big), \infty))$, the corresponding entries in column $(\infty, (j+1) \big(\frac{x-2}{2}\big))$ given by the algorithm, and the negatives of the cross differences listed in column $(\infty, (j+1) \big(\frac{x-2}{2}\big))$.

\
\begin{center}
\begin{tabularx}{0.8\textwidth} { 
  | >{\centering\arraybackslash}X 
  | >{\centering\arraybackslash}X 
  | >{\centering\arraybackslash}X | }
 \hline
 Entries in column $((j+1) \big(\frac{x-2}{2}\big), \infty))$  & Entries in column $(\infty, (j+1) \big(\frac{x-2}{2}\big))$ & Negatives of column $(\infty, (j+1) \big(\frac{x-2}{2}\big))$ \\
 \hline
 3  & 0  & 0  \\
 4  & 1  & 5  \\
 1  & 4  & 2  \\
\hline
 0  & 3  & 3  \\
 2  & 5  & 1  \\
 5  & 2  & 4  \\
\hline
 0  & 3  & 3  \\
 1  & 4  & 2  \\
 4  & 1  & 5  \\
\hline
 3  & 0  & 0  \\
 5  & 2  & 4  \\
 2  & 5  & 1  \\
 \hline
\end{tabularx}
\end{center}
\

In each row of the table, the union of the first and third entries gives the cross differences covered by the edge $((j+1) \big(\frac{x-2}{2}\big), \infty))$. Notice that in each row, the union is $\{0,1,2,3,4,5\}$. Thus the union of the entries in the columns indexed by $(u,v)$, where $u=\infty$ or $v=\infty$, is $\{0,1,2,3,4,5\}$.
\end{proof}

\

\begin{lemma}
\label{Tables1b}
Let $x \equiv 2,10 \pmod{12}$.  There exists an ordered set of $x-1$ $MRSM_{\mathbb Z_6}(S,3,x;\Sigma)$, $(T_0, T_1, \ldots T_{x-2})$, such that:
\begin{enumerate}
\item In $T_{0}$, $d \equiv 3 \pmod{6}$ for exactly two $d \in \Sigma$, and $d=0$ for one $d \in \Sigma$. 
\item In $T_1, \ldots, T_{x-3}$, $d \equiv 3 \pmod{6}$ for each $d \in \Sigma$.
\item In $T_{x-2}$, $d \equiv 3 \pmod{6}$ for exactly two $d \in \Sigma$, and $d \equiv 0,2,$ or $4 \pmod{6}$ for one $d \in \Sigma$.
\item Among the set of matrices, there are two columns indexed by $\{u,v\}$, with $\{u,v\} \in E(K_{x})$. The union of these two columns is $Z_6$.
\end{enumerate}
\end{lemma}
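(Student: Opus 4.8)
The plan is to mirror the proof of Lemma~\ref{Tables1a}, reusing the same $x-1$ Hamilton cycles $C_0,\ldots,C_{x-2}$ of $2K_x$ produced by Lemma~\ref{2Kn}, the same ordering $T_j\rightarrow C_{j(\frac{x-2}{2})}$ of the matrices, the indexing of columns by cycle edges, and a filling algorithm of the same shape. The single arithmetic fact that forces a change is the parity of $\frac{x-2}{2}$. For $x\equiv 4,8\pmod{12}$ this quantity is odd, whereas for $x\equiv 2,10\pmod{12}$ we have $x-2\equiv 0\pmod{4}$, so $\frac{x-2}{2}$ is even. In the generic transition of Lemma~\ref{Tables1a} exactly $1+\frac{x-2}{2}$ columns receive a $+3$; this count is even when $\frac{x-2}{2}$ is odd (so row sums are preserved), but it is odd when $\frac{x-2}{2}$ is even, so the transition now shifts every row sum by $3\pmod{6}$. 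Consequently the Lemma~\ref{Tables1a} algorithm applied verbatim would make the row sums alternate between $(3,0,0)$ and $(0,3,3)$ down the list, satisfying none of items~1--3. The whole task is to repair this parity defect while preserving item~4.

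My construction keeps the interior block $T_1,\ldots,T_{x-3}$ uniform and pushes the two unavoidable defects to the ends. First I would fill $T_0$ so that its third row is identically $0$ (hence that row sum is $d=0$, and Lemma~\ref{row0s} will later apply to it) while rows~$1$ and~$2$ each sum to $3$. Pinning row~$3$ to $0$ forces the remaining two entries of each column to lie one in $\{1,4\}$ and one in $\{2,5\}$; splitting the columns between the two assignments in the correct proportion, and choosing $1$ versus $4$ and $2$ versus $5$ to fix the sums modulo $6$, makes rows~$1,2$ sum to $3$, giving item~1. I would then make the interior transitions $T_1\to\cdots\to T_{x-3}$ row-sum preserving by applying a $+3$ to an even number of columns (for instance by also adding $3$ to column~$x$), so that $T_1,\ldots,T_{x-3}$ all have row sums $(3,3,3)$, giving item~2. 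The two boundary transitions carry the asymmetric corrections: at $T_0\to T_1$ I add the parity-fixing $+3$ to specific rows of the two end columns, exactly as the modified steps~1 and~4 do in the $T_{x-2}$ block of Lemma~\ref{Tables1a}, so that only the third-row sum is lifted from $0$ to $3$; and at $T_{x-3}\to T_{x-2}$ I apply a modified transition followed by the step-$5$--type swap on one consecutive pair of columns, chosen so that two row sums stay at $3$ and the third lands on some $d\in\{0,2,4\}$, giving item~3.

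For item~4 I would argue as in the proof of item~3 of Lemma~\ref{Tables1a}. Each unordered pair $\{u,v\}\in E(K_x)$ indexes exactly two columns across the whole list, and by Lemma~\ref{Middle columns} an even middle column $t_{j,l}$ of $T_j$ is paired with the odd column $t_{j+1,\,x+1-l}$ of $T_{j+1}$. For middle columns the two paired columns differ by adding $3$ to every entry, so their union is all of $\mathbb{Z}_6$ provided each column contains one element from each of $\{0,3\}$, $\{1,4\}$, $\{2,5\}$; this holds for $T_0$ as well, since a column $(a_1,a_2,0)^T$ with $a_1\in\{1,4\}$ and $a_2\in\{2,5\}$ already meets all three sets. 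For the end columns at $\infty$ one uses the same negation, reading a difference from $G_\infty$ as the negative of a difference to $G_\infty$, and recomputes the short case table as in Lemma~\ref{Tables1a}. The step-$5$ swap only permutes entries within a column, and the interior $+3$ adjustment only translates such a transversal by a constant, so in every case the union of a paired set of columns remains $\mathbb{Z}_6$.

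The main obstacle is the interaction between the all-zero third row of $T_0$ and item~4 across the single boundary transition $T_0\to T_1$. Because row~$3$ of $T_0$ is pinned to $0$, its columns have strictly less freedom than in Lemma~\ref{Tables1a}, and the parity-fixing transition must be arranged so that each $T_0$-column still meets its Lemma~\ref{Middle columns} partner in $T_1$ as a genuine constant $+3$ translate (so the pair unions to $\mathbb{Z}_6$) while simultaneously shifting only the third-row sum. Exhibiting one consistent assignment of entries that meets item~1, the $(3,3,3)$ interior block, item~3, and item~4 at once is where the real bookkeeping lies; the surviving row-sum computations are then routine counting modulo $6$ governed by the parity of $\frac{x-2}{2}$.
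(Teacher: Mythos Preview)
Your proposal is correct and takes essentially the same approach as the paper: the paper likewise exploits that $\frac{x-2}{2}$ is even here, puts the all-zero row in $T_0$ (row~1 rather than row~3), fixes the interior transitions $T_2,\ldots,T_{x-3}$ by also adding $+3$ to column~$x$ so the count of $+3$'s is even, and handles the two boundary transitions asymmetrically (at $T_0\to T_1$ only column~$x$ gets the modified increment $(+0,+3,+3)$, and $T_{x-2}$ uses a step-5 swap). The item~4 argument via Lemma~\ref{Middle columns}, the coset-transversal property of each column, and the end-column negation table is exactly as you describe.
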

\begin{proof}
Following a similar idea as in the previous case, proved in Lemma \ref{Tables1a}, we consider the base Hamilton cycle $C_0= \left(\infty, 0, x-2, 1, x-3, 2, x-4, \cdots , \frac{x}{2}, \frac{x-2}{2} \right)$. By Lemma \ref{2Kn}, $2K_x$ can be decomposed into $x$-cycles by developing this Hamilton cycle$\pmod{x-1}$ producing the cycles $C_0, C_1, \ldots, C_{x-2}$. As described in the discussion before Lemma \ref{Middle columns}, we will construct the $x-1$ matrices so that matrix $T_j$ correspond to cycle $C_{j(\frac{x-2}{2})}$, for $j=0,1,2, \ldots,x-2$. The columns of table $T_j$ are indexed by the edges of cycle $C_{j(\frac{x-2}{2})}$. Therefore, among the set of matrices, we see every pair $\{u,v\}$ exactly twice among the set of column indices. We now describe how to fill in the columns of each $T$, when $x \equiv 2,10 \pmod{12}$.

\begin{center}
    {\bf Algorithm for filling the $x-1$ matrices $T_j$:}
\end{center}

\begin{itemize}
    \item Table $T_0$
    \begin{enumerate}
        \item Fill column 1 with $(0,2,5)^T$.
        \item Fill column $i$, $i\in \{2, 4,\ldots x-2\}$ with $(0,1,2)^T$.
        \item Fill column $i$, $i\in \{3, 5,\ldots x-1\}$ with $(0,2,1)^T$.
        \item Fill column $x$ with $(0,1,4)^T$.
    \end{enumerate}
     \item Table $T_1$.
    \begin{enumerate}
        \item Fill column 1 in $T_1$ with $(a+3,b+3,c+3)^T$, where $(a,b,c)^T$ is column $x$ from $T_0$.
        \item Fill column $i$ in $T_1$, $i\in \{3, 5,\ldots x-1\}$ with $(a+3,b+3,c+3)^T$, where $(a,b,c)^T$ is column $x+1-i$ from $T_0$.
        \item Fill column $i$ in $T_1$, $i\in \{2, 4,\ldots x-2\}$ with $(a,b,c)^T$, where $(a,b,c)^T$ is column $x+1-i$ from $T_0$.
        \item Fill column $x$ in $T_1$ with $(a+0,b+3,c+3)^T$, where $(a,b,c)^T$ is column 1 from $T_0$.
    \end{enumerate}
    \item Tables $T_k$, $k \in \{2, \dots x-3 \}$.
    \begin{enumerate}
        \item Fill column 1 in $T_k$ with $(a+3,b+3,c+3)^T$, where $(a,b,c)^T$ is column $x$ in $T_{k-1}$.
        \item Fill column $i$ in $T_k$, $i\in \{3, 5,\ldots x-1\}$ with $(a+3,b+3,c+3)^T$, where $(a,b,c)^T$ is column $x+1-i$ from $T_{k-1}$.
        \item Fill column $i$ in $T_k$, $i\in \{2, 4,\ldots x-2\}$ with $(a,b,c)^T$, where $(a,b,c)^T$ is column $x+1-i$ from $T_{k-1}$.
        \item Fill column $x$ with $(a+3,b+3,c+3)^T$, where $(a,b,c)^T$ is column 1 from $T_{k-1}$.
    \end{enumerate}
    \item Table $T_{x-2}$.
    \begin{enumerate}
       \item Fill column 1 in $T_{x-2}$ with $(a+3,b,c)^T$, where $(a,b,c)^T$ is column $x$ from $T_{x-3}$.
        \item Fill column $i$ in $T_{x-2}$, $i\in \{3, 5,\ldots x-1\}$ with $(a+3,b+3,c+3)^T$, where $(a,b,c)^T$ is column $x+1-i$ from $T_{x-3}$.
        \item Fill column $i$ in $T_{x-2}$, $i\in \{2, 4,\ldots x-2\}$ with $(a,b,c)^T$, where $(a,b,c)^T$ is column $x+1-i$ from $T_{x-3}$.
        \item Fill column $x$ in $T_{x-2}$ with $(a+0,b+1,c+1)^T$, where $(a,b,c)^T$ is column 1 from $T_{x-3}$.
        \item Replace the entries of any two consecutive columns $i$ and $i+1$ in $T_{x-2}$, where $i\in \{2, 4, 6,\ldots x-2\}$, as follows:
        $$\hbox{Column} \ i: (a,b,c)^T \rightarrow (a+1,b+1,c-2)^T$$
        $$\hbox{Column} \ i+1: (a,b,c)^T \rightarrow (a+1,b-2,c+1)^T$$
    \end{enumerate}
\end{itemize}

\vspace{5mm}
{\bf Proof of item 1.}

It is clear that the first row of $T_0$ contains all 0's, so the sum is 0. In the second row, the first entry was filled with 2, the last entry was filled with 1; and half of the middle columns are filled with 1, while the other half are filled with 2. Since $\frac{x-2}{2}$ is even when $x \equiv 2,10 \pmod{12}$, we are adding $1+2=3$ an odd number of times. Thus, the sum of the second row is $d\equiv 3 \ \ \hbox{(mod 6)}$. Similarly for the third row, the first entry was filled with 5, the last entry was filled with 4,  and the middle entries alternate between 1 and 2. Thus, the sum of the entries of the third row is $d\equiv 3 \pmod{6}$. Thus item 1 holds.

\

{\bf Proof of item 2.}
According to the algorithm, $R_1^1 \equiv R_1^0 +3 \pmod{6} \equiv 3 \pmod{6}$, $R_2^1 \equiv R_2^0 +0 \pmod{6} \equiv 3 \pmod{6}$, and $R_3^1 \equiv R_3^0 +0 \pmod{6} \equiv 3 \pmod{6}$. Thus, $T_1$ satisfies the condition given in item 2.

\

For $k \in \{2, \ldots,x-3\}$, we use induction, starting with the base case, $T_2$. We have already shown that $T_1$ satisfies the condition. By the algorithm, half of the middle columns (an even number) from $T_1$ are repeated in $T_2$, and the other half are obtained by adding 3 to the same set of columns from $T_1$ in a different order. Column 1 of $T_2$ is obtained by adding $3$ to column $x$ from $T_1$, and column $x$ of $T_2$ is obtained by adding $3$ to column 1 from $T_1$. Thus we have added 3 to the columns of $T_1$ an even number of times. Therefore, the sum of each row of $T_2$ is congruent to the sum of each row of $T_1 \pmod{6}$. Thus $T_2$, satisfies the condition. Assuming $T_{k-1}$ satisfies the condition, then by the same argument, any table $T_k$, $k \in \{3, \ldots, x-3\}$, also satisfies the condition. Therefore, item 2 holds, by induction.

\
    
{\bf Proof of item 3.}
    
According to the algorithm, at the end of step 4, we have $R_1^{x-2} \equiv R_1^{x-3} +3 \pmod{6}$, $R_2^{x-2} \equiv R_2^{x-3} +1 \pmod{6}$, and $R_3^{x-2} \equiv R_3^{x-3} +1 \pmod{6}$. Then after step 5, we get $R_1^{x-2} \equiv R_1^{x-3} +5 \pmod{6} \equiv 2 \pmod{6}$, $R_2^{x-2} \equiv R_2^{x-3} \equiv 3 \pmod{6}$, and $R_3^{x-2} \equiv R_3^{x-3} \equiv 3 \pmod{6}$. Therefore item 3 holds.
    
\

{\bf Proof of item 4.}    

As in the proof of Lemma \ref{Tables1a}, we first consider middle columns. Consider any column $t_{j,l}$, $l \in \{2,4,6, \ldots, x-2\}$, and suppose $t_{j,l}=(u,v)$. Then we have $u,v \neq \infty$. According to the algorithm, the entries in column $t_{j,l} \in \{(0,1,2)^T, (0,2,1)^T, (3,4,5)^T, (3,5,4)^T\}$. Also, if the entries in column $t_{j,l}$ are $(a,b,c)^T$, then it follows from the algorithm that the entries in column $t_{j+1,x+1-l}$ are $(a+3, b+3, c+3)^T$. By Lemma \ref{Middle columns}, we have that $t_{j,l}=t_{j+1, x+1-l}$; therefore, the union of the entries in the columns indexed by $(u,v)$ is $\{0,1,2,3,4,5\}$.

When $j=x-2$, we must also consider the entries in the columns affected by step 5 of the algorithm. So consider any pair of consecutive columns of $T_j$; $t_{j,l}$ and $t_{j,l+1}$, $l \in \{2,4,6, \ldots, x-2\}$. Because $x-2 \equiv 0 \ \hbox{or } 8 \pmod{12}$, it follows that $x-2 \equiv 0 \pmod{4}$, which implies $\frac{x-2}{2}$ is even. Therefore, according to the algorithm, the entries in column $t_{j,l}$ and $t_{j,l+1}$ before step 5 are $(0,1,2)^T$ and $(0,2,1)^T$ respectively. However, after step 5, the entries become  $(1,2,0)^T$ and $(1,0,2)^T$ respectively, since we added 3 to $(0,1,2)^T$ and $(0,2,1)^T$ an even number of times. So while the order of the entries change, the set of entries remains the same. Thus the union of the entries in the columns indexed by $(u,v)$ in $T_{x-2}$ is again $\{0,1,2,3,4,5\}$ for $u,v \neq 0$.

\

We now turn to the case when $u=\infty$ or $v=\infty$. For this case, we focus on columns from the matrices that are on the ends, i.e, $t_{j,1}$ and $t_{j,x}$. For any $j \in \{0, \ldots,x-2\}$, we have that $T_j \rightarrow C_{j\big(\frac{x-2}{2} \big)}$, so $t_{j,x}=((j+1) \big(\frac{x-2}{2}\big), \infty)$. Also, since $T_{j+1} \rightarrow C_{(j+1) \big(\frac{x-2}{2} \big)}$, we have $t_{(j+1),1}=(\infty, (j+1) \big(\frac{x-2}{2}\big))$. 
    
As in the proof of Lemma \ref{Tables1a}, the entries in column $t_{j+1,1}$ represent cross differences from $G_{\infty}$ to $G_{(j+1) \big(\frac{x-2}{2} \big)}$ or equivalently, the negatives of these entries represent cross differences from $G_{(j+1) \big(\frac{x-2}{2} \big)}$ to $G_{\infty}$.

By the algorithm, the entries in column $t_{j,x}$ are from the set $\{(0,1,4)^T, (0,5,2)^T, (3,5,2)^T\}$, and if column $t_{j,x}$ contains entries $(a,b,c)^T$, then column $t_{j+1,1}$ contains entries $(a+3,b+3,c+3)^T$. The following table gives the possibilities of the entries in column $((j+1) \big(\frac{x-2}{2}\big), \infty))$, the corresponding entries in column $(\infty, (j+1) \big(\frac{x-2}{2}\big))$ given by the algorithm, and the negatives of the cross differences listed in column $(\infty, (j+1) \big(\frac{x-2}{2}\big))$.

\
\begin{center}
\begin{tabularx}{0.8\textwidth} { 
  | >{\centering\arraybackslash}X 
  | >{\centering\arraybackslash}X 
  | >{\centering\arraybackslash}X | }
 \hline
 Entries in column $((j+1) \big(\frac{x-2}{2}\big), \infty))$  & Entries in column $(\infty, (j+1) \big(\frac{x-2}{2}\big))$ & Negatives of column $(\infty, (j+1) \big(\frac{x-2}{2}\big))$ \\
 \hline
 0  & 3  & 3  \\
 1  & 4  & 2  \\
 4  & 1  & 5  \\
\hline
 0  & 3  & 3  \\
 5  & 2  & 4  \\
 2  & 5  & 1  \\
\hline
 3  & 0  & 0  \\
 5  & 2  & 4  \\
 2  & 5  & 1  \\
\hline
\end{tabularx}
\end{center}
\

In each row of the table, the union of the first and third entries gives the cross differences covered by the edge $((j+1) \big(\frac{x-2}{2}\big), \infty))$. Notice that in each row, the union is $\{0,1,2,3,4,5\}$. Thus the union of the entries in the columns indexed by $\{u,v\}$ is $\{0,1,2,3,4,5\}$.

\end{proof}

\section{Finding Decompositions}

In this section, we utilize the methods described previously to solve the case when $\gcd(6,N)=2$ and $N=2x$.

\subsection{$\gcd(6,N)=2$ and $x \equiv 2 \ \hbox{or} \ 4 \pmod{6}$}

\begin{theorem}
\label{x=2}
 There exists a solution to ${\hbox{HWP}}(12t;6,4;1,6t-2)$ for all $t \geq 1$.
\end{theorem}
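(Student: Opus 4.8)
The plan is to view $K_{12t}$ through the lens of Sections 3--5: split the $12t$ vertices into two groups of $6t$, each identified with $\mathbb{Z}_{6t}$, so that the edge set splits into the \emph{inside} edges (two copies of $K_{6t}$) and the \emph{cross} edges (the equipartite graph $K_{(6t:2)}$). The one $C_6$-factor will come from a single inside difference, while the $6t-2$ $C_4$-factors and the deleted $1$-factor $F$ will be assembled from the cross differences together with the leftover inside edges. First I would dispatch the necessary conditions of Theorem~\ref{Ncond}: $v=12t$ is even, so $\alpha+\beta$ must equal $(v-2)/2 = 6t-1 = 1+(6t-2)$, and $6\mid v$, $4\mid v$; these hold, so the statement is purely a matter of construction, and since $N=4=2x$ with $x=2$ we sit in the regime $\gcd(6,N)=2$ for which the tools of Lemmas~\ref{mix}, \ref{row0s} and~\ref{Tables1b} were built.

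The single $C_6$-factor is cheap. Inside each group $\mathbb{Z}_{6t}$ the edges of difference $t$ form, by Corollary~\ref{Stern-Lenz2}, exactly $\gcd(t,6t)=t$ cycles of length $6t/t=6$; taking this one difference in both groups yields $2t$ disjoint $6$-cycles, i.e.\ a single $C_6$-factor. Likewise the inside difference $3t=g/2$ is a $1$-factor of each $K_{6t}$, and I would earmark its union as the factor $F$ that must be removed when $v$ is even. For the cross edges I would use the difference method of Lemma~\ref{Lemma2}: encoding the two groups as the $2$-cycle on parts and reading the cross differences as the rows of a modified row-sum matrix, a row whose entries sum to $g/2=3t$ produces a factor whose components are all $2x=4$-cycles. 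The $6t$ cross differences pair off uniquely through the order-two element $3t$ into the $3t$ classes $\{d,d+3t\}$, each of which is such a row; this supplies $3t$ genuine $C_4$-factors using every cross difference exactly once.

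It remains to convert the leftover inside differences into $C_4$-factors. After removing $t$ and $3t$, the inside differences $\{1,2,\dots,3t-1\}\setminus\{t\}$ number $3t-2$, each of degree $2$, so they should yield $3t-2$ further $C_4$-factors; together with the $3t$ cross factors this is exactly $6t-2$, as required. When $3t-2$ is even I would pair these differences and resolve each resulting $4$-regular circulant into $C_4$-factors directly. The $\mathbb{Z}_6$-specific machinery of Section~5 is held in reserve for the residual steps in which a short inside configuration must be combined with cross edges: using the explicit $1$-factorization of $K_6$ in Figure~\ref{1-factors}, Lemma~\ref{mix} splices the inside edges $f_1\cup f_2$ with an even-sum cross row into two $4$-cycle factors, while Lemma~\ref{row0s} converts a zero row together with $f_5$ into one $C_4$-factor plus a perfect matching. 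The modified row-sum matrices of Lemma~\ref{Tables1b} are engineered exactly so that all but one row sum to $3\pmod 6$ (the value $g/2$) with the exceptional row even, matching these hypotheses and guaranteeing via item~4 that no difference is reused.

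The hard part will be the parity of $t$ and the resulting interface between inside and cross edges. When $3t-2$ is odd (that is, $t$ odd) the inside differences cannot be paired off evenly, so one inside difference must be spliced against cross edges rather than resolved internally; reserving the right cross differences for this splice without destroying the clean $\{d,d+3t\}$ pairing --- which has odd gap when $t$ is odd, and so does not by itself present the even row that Lemma~\ref{mix} needs --- and checking that every spliced component is a true $4$-cycle (neither a collapsed digon nor a longer cycle) is where the accounting is genuinely delicate, and it is precisely here that the degenerate behaviour of the generic construction at the smallest parameter $x=2$ intrudes. I expect the essential difficulty to be this simultaneous bookkeeping --- that each inside difference, each cross difference, and each of the five $1$-factors of every $K_6$ is consumed exactly once, that the cycle lengths come out to $4$ and $6$ as intended, and that exactly one perfect matching survives to be deleted --- rather than any isolated cycle computation, and it is to make that accounting transparent that the modified row-sum matrix was introduced.
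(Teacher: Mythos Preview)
Your plan has a structural gap that becomes visible already at $t=1$ and is fatal for every odd $t$. By splitting $K_{12t}$ into two parts of size $6t$, you commit to building the residual $3t-2$ $C_4$-factors out of \emph{inside} differences of $K_{6t}$; but a $C_4$-factor that uses only inside edges would need $6t/4$ four-cycles per part, and $4\nmid 6t$ whenever $t$ is odd. So for odd $t$ not ``one inside difference must be spliced against cross edges'' --- \emph{none} of the remaining inside differences can be resolved internally, and your proposal has already spent all $6t$ cross differences on the $3t$ pure-cross $C_4$-factors, leaving nothing to splice with. Even for $t=1$ your scheme produces only three $C_4$-factors (from the pairs $\{0,3\},\{1,4\},\{2,5\}$) and is then stuck with inside difference $2$, which is two triangles per $K_6$. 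The lemmas you reach for do not rescue this: Lemmas~\ref{mix} and~\ref{row0s} are written for $g=6$ with the specific $1$-factorization $f_1,\dots,f_5$ of $K_6$, not for $\mathbb{Z}_{6t}$, and the algorithm of Lemma~\ref{Tables1b} refers to tables $T_1,\dots,T_{x-3}$ and so degenerates at $x=2$.

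The paper avoids all of this by choosing the opposite partition: $2t$ parts of size $6$ rather than $2$ parts of size $6t$. It first settles $t=1$ by hand --- two of the four $C_4$-factors are pure cross, while the other two mix a single cross difference with carefully chosen inside edges of $K_6$, and the $C_6$-factor and $1$-factor are purely inside. For $t\ge 2$ it takes a $1$-factorization $F_1,\dots,F_{2t-1}$ of $K_{2t}$, runs the $t=1$ construction on the $2$-cycles sitting over the edges of $F_1$ (this supplies the unique $C_6$-factor, four $C_4$-factors and the deleted $1$-factor), and on each remaining $F_i$ simply uses three cross-difference pairs in $\mathbb{Z}_6$ to get three more $C_4$-factors, for a total of $4+3(2t-2)=6t-2$. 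The key move you are missing is that the recursion lives on $K_{2t}$, not on $\mathbb{Z}_{6t}$, so that every ingredient stays inside $\mathbb{Z}_6$ where the divisibility works.
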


\begin{proof}
When $t=1$, we prove that there exists a solution to ${\hbox{HWP}}(12;6,4;1,4)$. 

\

Let $G_0$ and $G_1$ be the two parts of $K_{(6:2)}$, and consider the 2-cycle $H=(0,1)$. Let $d_1=1$ and $d_2=2$, so $d=d_1+d_2 \equiv 3 \pmod{6}$. Since $\gcd(3,6)=3$, it follows from Lemma \ref{Lemma2} that $G[\Delta]$ consists of 3 components, and each component is a 4-cycle. Denote this 2-factor as $A$. Following in this way, we can also let $d_1=0$ and $d_2=3$ to obtain another 4-cycle factor, $B$. At this point we have covered cross differences , $0,1,3,4$ from $G_0$ to $G_1$, and cross differences $0,2,3,5$ from $G_1$ to $G_0$. The cross difference 2 from $G_0$ to $G_1$ is equivalent to cross difference 4 from $G_1$ to $G_0$. This is covered in $C$ with $d_1=2$ and $d_2=4$. The cross difference 5 from $G_0$ to $G_1$ (equivalently, 1 from $G_1$ to $G_0$) is covered by $D$. One can easily check that the inside edges in $C \cup D \cup E \cup F$ produce $2K_6$. 

Thus $A \cup B \cup C \cup D \cup E \cup F$ gives the solution. The solution is illustrated in Figure \ref{12;6,4;1,4}.

\

\begin{figure}[h!]
\centering
\definecolor{mygray}{RGB}{70,80,70}

\begin{tikzpicture}[thick,
  every node/.style={circle},
  fsnode/.style={fill=mygray},
  gsnode/.style={fill=mygray},
  hsnode/.style={fill=mygray},
  isnode/.style={fill=mygray}
]

% the vertices of 1
\begin{scope}[start chain=going below,node distance=7mm]
  \node[fsnode,on chain] (f1) [label={above:\small ${1,4}$}] {};
  \node[fsnode,on chain] (f2) {};
  \node[fsnode,on chain] (f3) {};
  \node[fsnode,on chain] (f4) {};
  \node[fsnode,on chain] (f5) {};
  \node[fsnode,on chain] (f6) [label={below right:${A}$}]{};
\end{scope}

% the vertices of 2
\begin{scope}[xshift=1cm,yshift=0cm,start chain=going below,node distance=7mm]
  \node[gsnode,on chain] (g1) [label={above:\small ${2,5}$}] {};
  \node[gsnode,on chain] (g2) {};
  \node[gsnode,on chain] (g3) {};
  \node[gsnode,on chain] (g4) {};
  \node[gsnode,on chain] (g5) {};
  \node[gsnode,on chain] (g6) {};
\end{scope}

% the vertices of 3
\begin{scope}[xshift=3cm,yshift=0cm,start chain=going below,node distance=7mm]
  \node[hsnode,on chain] (h1) [label={above:\small ${0,3}$}]{};
  \node[hsnode,on chain] (h2) {};
  \node[hsnode,on chain] (h3) {};
  \node[hsnode,on chain] (h4) {};
  \node[hsnode,on chain] (h5) {};
  \node[hsnode,on chain] (h6) [label={below right:${B}$}]{};
\end{scope}

% the vertices of 4
\begin{scope}[xshift=4cm,yshift=0cm,start chain=going below,node distance=7mm]
  \node[isnode,on chain] (i1) [label={above:\small ${0,3}$}] {};
  \node[isnode,on chain] (i2) {};
  \node[isnode,on chain] (i3) {};
  \node[isnode,on chain] (i4) {};
  \node[isnode,on chain] (i5) {};
  \node[isnode,on chain] (i6) {};
\end{scope}

% the vertices of 3
\begin{scope}[xshift=6cm,yshift=0cm,start chain=going below,node distance=7mm]
  \node[hsnode,on chain] (j1) [label={above:\small ${2}$}]{};
  \node[hsnode,on chain] (j2) {};
  \node[hsnode,on chain] (j3) {};
  \node[hsnode,on chain] (j4) {};
  \node[hsnode,on chain] (j5) {};
  \node[hsnode,on chain] (j6) [label={below right:${C}$}] {};
\end{scope}

% the vertices of 4
\begin{scope}[xshift=7cm,yshift=0cm,start chain=going below,node distance=7mm]
  \node[isnode,on chain] (k1) {};
  \node[isnode,on chain] (k2) {};
  \node[isnode,on chain] (k3) {};
  \node[isnode,on chain] (k4) {};
  \node[isnode,on chain] (k5) {};
  \node[isnode,on chain] (k6) {};
\end{scope}

% the vertices of 3
\begin{scope}[xshift=9cm,yshift=0cm,start chain=going below,node distance=7mm]
  \node[hsnode,on chain] (l1) [label={above:\small ${5}$}]{};
  \node[hsnode,on chain] (l2) {};
  \node[hsnode,on chain] (l3) {};
  \node[hsnode,on chain] (l4) {};
  \node[hsnode,on chain] (l5) {};
  \node[hsnode,on chain] (l6) [label={below right:${D}$}] {};
\end{scope}

% the vertices of 4
\begin{scope}[xshift=10cm,yshift=0cm,start chain=going below,node distance=7mm]
  \node[isnode,on chain] (m1) {};
  \node[isnode,on chain] (m2) {};
  \node[isnode,on chain] (m3) {};
  \node[isnode,on chain] (m4) {};
  \node[isnode,on chain] (m5) {};
  \node[isnode,on chain] (m6) {};
\end{scope}

% the vertices of 3
\begin{scope}[xshift=12cm,yshift=0cm,start chain=going below,node distance=7mm]
  \node[hsnode,on chain] (n1) {};
  \node[hsnode,on chain] (n2) {};
  \node[hsnode,on chain] (n3) {};
  \node[hsnode,on chain] (n4) {};
  \node[hsnode,on chain] (n5) {};
  \node[hsnode,on chain] (n6) [label={below right:${E}$}] {};
\end{scope}

% the vertices of 4
\begin{scope}[xshift=13cm,yshift=0cm,start chain=going below,node distance=7mm]
  \node[isnode,on chain] (o1) {};
  \node[isnode,on chain] (o2) {};
  \node[isnode,on chain] (o3) {};
  \node[isnode,on chain] (o4) {};
  \node[isnode,on chain] (o5) {};
  \node[isnode,on chain] (o6) {};
\end{scope}

% the vertices of 3
\begin{scope}[xshift=15cm,yshift=0cm,start chain=going below,node distance=7mm]
  \node[hsnode,on chain] (p1) {};
  \node[hsnode,on chain] (p2) {};
  \node[hsnode,on chain] (p3) {};
  \node[hsnode,on chain] (p4) {};
  \node[hsnode,on chain] (p5) {};
  \node[hsnode,on chain] (p6) [label={below right:${F}$}] {};
\end{scope}

% the vertices of 4
\begin{scope}[xshift=16cm,yshift=0cm,start chain=going below,node distance=7mm]
  \node[isnode,on chain] (q1) {};
  \node[isnode,on chain] (q2) {};
  \node[isnode,on chain] (q3) {};
  \node[isnode,on chain] (q4) {};
  \node[isnode,on chain] (q5) {};
  \node[isnode,on chain] (q6) {};
\end{scope}

% the edges
\draw (f1) -- (g2);
\draw (g2) -- (f4);
\draw (f4) -- (g5);
\draw (g5) -- (f1);
\draw (f2) -- (g3);
\draw (g3) -- (f5);
\draw (f5) -- (g6);
\draw (g6) -- (f2);
\draw (f3) -- (g4);
\draw (g4) -- (f6);
\draw (f6) -- (g1);
\draw (g1) -- (f3);
\draw (h1) -- (i1);
\draw (i1) -- (h4);
\draw (h4) -- (i4);
\draw (i4) -- (h1);
\draw (h2) -- (i2);
\draw (i2) -- (h5);
\draw (h5) -- (i5);
\draw (i5) -- (h2);
\draw (h3) -- (i3);
\draw (i3) -- (h6);
\draw (h6) -- (i6);
\draw (i6) -- (h3);
\draw (j1) -- (k3);
\draw (k3) edge[bend right] node [left] {} node {} (k2);
\draw (k2) -- (j6);
\draw (j6) edge[bend left] node [left] {} node {}  (j1);
\draw (j2) -- (k4);
\draw (k4) edge[bend right] node [left] {} node {}  (k1);
\draw (k1) -- (j5);
\draw (j5) edge[bend left] node [left] {} node {}  (j2);
\draw (j3) -- (k5);
\draw (k5) edge[bend left] node [left] {} node {}  (k6);
\draw (k6) -- (j4);
\draw (j4) edge[bend left] node [left] {} node {}  (j3);
\draw (l1) -- (m6);
\draw (m6) edge[bend right] node [left] {} node {} (m4);
\draw (m4) -- (l5);
\draw (l5) edge[bend left] node [left] {} node {}  (l1);
\draw (l2) -- (m1);
\draw (m1) edge[bend left] node [left] {} node {}  (m2);
\draw (m2) -- (l3);
\draw (l3) edge[bend left] node [left] {} node {}  (l2);
\draw (l4) -- (m3);
\draw (m3) edge[bend left] node [left] {} node {}  (m5);
\draw (m5) -- (l6);
\draw (l6) edge[bend left] node [left] {} node {}  (l4);
\draw (n1) edge[bend left] node [left] {} node {}  (n2);
\draw (n2) edge[bend left] node [left] {} node {}  (n4);
\draw (n4) edge[bend left] node [left] {} node {}  (n5);
\draw (n5) edge[bend left] node [left] {} node {}  (n6);
\draw (n6) edge[bend left] node [left] {} node {}  (n3);
\draw (n3) edge[bend left] node [left] {} node {}  (n1);
\draw (o1) edge[bend left] node [left] {} node {}  (o3);
\draw (o3) edge[bend left] node [left] {} node {}  (o4);
\draw (o4) edge[bend left] node [left] {} node {}  (o5);
\draw (o5) edge[bend left] node [left] {} node {}  (o2);
\draw (o2) edge[bend left] node [left] {} node {}  (o6);
\draw (o6) edge[bend right] node [left] {} node {}  (o1);
\draw (p1) edge[bend right] node [left] {} node {}  (p4);
\draw (p2) edge[bend right] node [left] {} node {}  (p6);
\draw (p3) edge[bend right] node [left] {} node {}  (p5);
\draw (q1) edge[bend left] node [left] {} node {}  (q5);
\draw (q2) edge[bend left] node [left] {} node {}  (q4);
\draw (q3) edge[bend left] node [left] {} node {}  (q6);
\end{tikzpicture}
\caption{$\hbox{HWP}(12;6,4;1,4)$}
\label{12;6,4;1,4}
\end{figure}

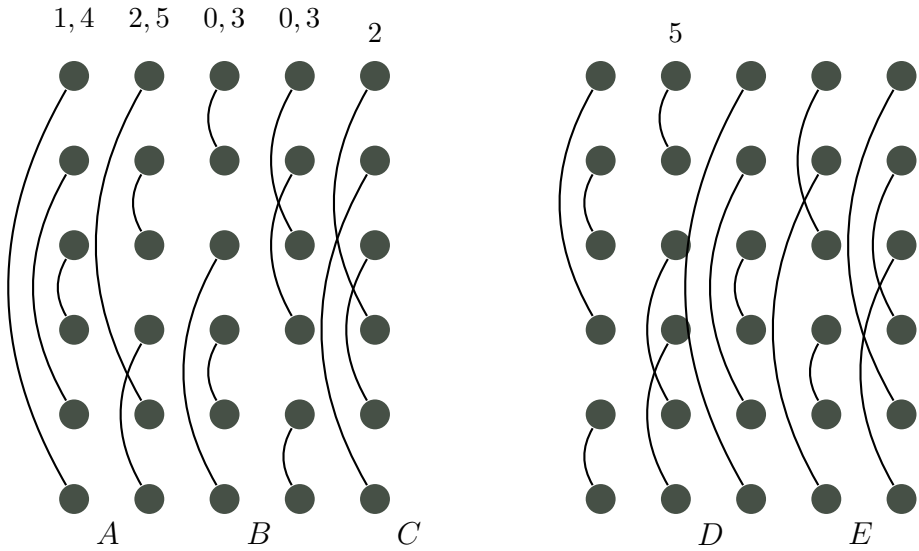
\begin{figure}[h!]
\centering
\definecolor{mygray}{RGB}{70,80,70}

\begin{tikzpicture}[thick,
  every node/.style={circle},
  fsnode/.style={fill=mygray},
  gsnode/.style={fill=mygray},
  hsnode/.style={fill=mygray},
  isnode/.style={fill=mygray}
]

% the vertices of 1
\begin{scope}[start chain=going below,node distance=7mm]
  \node[fsnode,on chain] (f1) [label={above:\small ${1,4}$}] {};
  \node[fsnode,on chain] (f2) {};
  \node[fsnode,on chain] (f3) {};
  \node[fsnode,on chain] (f4) {};
  \node[fsnode,on chain] (f5) {};
  \node[fsnode,on chain] (f6) [label={below right:${A}$}]{};
\end{scope}

% the vertices of 2
\begin{scope}[xshift=1cm,yshift=0cm,start chain=going below,node distance=7mm]
  \node[gsnode,on chain] (g1) [label={above:\small ${2,5}$}] {};
  \node[gsnode,on chain] (g2) {};
  \node[gsnode,on chain] (g3) {};
  \node[gsnode,on chain] (g4) {};
  \node[gsnode,on chain] (g5) {};
  \node[gsnode,on chain] (g6) {};
\end{scope}

% the vertices of 3
\begin{scope}[xshift=2cm,yshift=0cm,start chain=going below,node distance=7mm]
  \node[hsnode,on chain] (h1) [label={above:\small ${0,3}$}]{};
  \node[hsnode,on chain] (h2) {};
  \node[hsnode,on chain] (h3) {};
  \node[hsnode,on chain] (h4) {};
  \node[hsnode,on chain] (h5) {};
  \node[hsnode,on chain] (h6) [label={below right:${B}$}]{};
\end{scope}

% the vertices of 4
\begin{scope}[xshift=3cm,yshift=0cm,start chain=going below,node distance=7mm]
  \node[isnode,on chain] (i1) [label={above:\small ${0,3}$}] {};
  \node[isnode,on chain] (i2) {};
  \node[isnode,on chain] (i3) {};
  \node[isnode,on chain] (i4) {};
  \node[isnode,on chain] (i5) {};
  \node[isnode,on chain] (i6) {};
\end{scope}

% the vertices of 3
\begin{scope}[xshift=4cm,yshift=0cm,start chain=going below,node distance=7mm]
  \node[hsnode,on chain] (j1) [label={above:\small ${2}$}]{};
  \node[hsnode,on chain] (j2) {};
  \node[hsnode,on chain] (j3) {};
  \node[hsnode,on chain] (j4) {};
  \node[hsnode,on chain] (j5) {};
  \node[hsnode,on chain] (j6) [label={below right:${C}$}] {};
\end{scope}

% the vertices of 4
\begin{scope}[xshift=7cm,yshift=0cm,start chain=going below,node distance=7mm]
  \node[isnode,on chain] (k1) {};
  \node[isnode,on chain] (k2) {};
  \node[isnode,on chain] (k3) {};
  \node[isnode,on chain] (k4) {};
  \node[isnode,on chain] (k5) {};
  \node[isnode,on chain] (k6) {};
\end{scope}

% the vertices of 3
\begin{scope}[xshift=8cm,yshift=0cm,start chain=going below,node distance=7mm]
  \node[hsnode,on chain] (l1) [label={above:\small ${5}$}]{};
  \node[hsnode,on chain] (l2) {};
  \node[hsnode,on chain] (l3) {};
  \node[hsnode,on chain] (l4) {};
  \node[hsnode,on chain] (l5) {};
  \node[hsnode,on chain] (l6) [label={below right:${D}$}] {};
\end{scope}

% the vertices of 4
\begin{scope}[xshift=9cm,yshift=0cm,start chain=going below,node distance=7mm]
  \node[isnode,on chain] (m1) {};
  \node[isnode,on chain] (m2) {};
  \node[isnode,on chain] (m3) {};
  \node[isnode,on chain] (m4) {};
  \node[isnode,on chain] (m5) {};
  \node[isnode,on chain] (m6) {};
\end{scope}

% the vertices of 3
\begin{scope}[xshift=10cm,yshift=0cm,start chain=going below,node distance=7mm]
  \node[hsnode,on chain] (n1) {};
  \node[hsnode,on chain] (n2) {};
  \node[hsnode,on chain] (n3) {};
  \node[hsnode,on chain] (n4) {};
  \node[hsnode,on chain] (n5) {};
  \node[hsnode,on chain] (n6) [label={below right:${E}$}] {};
\end{scope}

% the vertices of 4
\begin{scope}[xshift=11cm,yshift=0cm,start chain=going below,node distance=7mm]
  \node[isnode,on chain] (o1) {};
  \node[isnode,on chain] (o2) {};
  \node[isnode,on chain] (o3) {};
  \node[isnode,on chain] (o4) {};
  \node[isnode,on chain] (o5) {};
  \node[isnode,on chain] (o6) {};
\end{scope}

% the edges
\draw (f1) edge[bend right] node [left] {} node {} (f6);
\draw (f2) edge[bend right] node [left] {} node {} (f5);
\draw (f3) edge[bend right] node [left] {} node {}  (f4);
\draw (g1) edge[bend right] node [left] {} node {} (g5);
\draw (g2) edge[bend right] node [left] {} node {} (g3);
\draw (g4) edge[bend right] node [left] {} node {}  (g6);
\draw (h1) edge[bend right] node [left] {} node {} (h2);
\draw (h3) edge[bend right] node [left] {} node {} (h6);
\draw (h4) edge[bend right] node [left] {} node {}  (h5);
\draw (i1) edge[bend right] node [left] {} node {} (i3);
\draw (i2) edge[bend right] node [left] {} node {} (i4);
\draw (i5) edge[bend right] node [left] {} node {}  (i6);
\draw (j1) edge[bend right] node [left] {} node {} (j4);
\draw (j2) edge[bend right] node [left] {} node {} (j6);
\draw (j3) edge[bend right] node [left] {} node {}  (j5);

\draw (k1) edge[bend right] node [left] {} node {} (k4);
\draw (k2) edge[bend right] node [left] {} node {} (k3);
\draw (k5) edge[bend right] node [left] {} node {}  (k6);
\draw (l1) edge[bend right] node [left] {} node {} (l2);
\draw (l3) edge[bend right] node [left] {} node {} (l5);
\draw (l4) edge[bend right] node [left] {} node {}  (l6);
\draw (m1) edge[bend right] node [left] {} node {} (m6);
\draw (m2) edge[bend right] node [left] {} node {} (m5);
\draw (m3) edge[bend right] node [left] {} node {}  (m4);
\draw (n1) edge[bend right] node [left] {} node {} (n3);
\draw (n2) edge[bend right] node [left] {} node {} (n6);
\draw (n4) edge[bend right] node [left] {} node {}  (n5);
\draw (o1) edge[bend right] node [left] {} node {} (o5);
\draw (o2) edge[bend right] node [left] {} node {} (o4);
\draw (o3) edge[bend right] node [left] {} node {}  (o6);
\end{tikzpicture}

\caption{A 1-factorization of $K_6$ on $G_0$, and a 1-factorization of $K_6$ on $G_1$ }
\label{8}
\end{figure}

\

When $t \geq 2$, we provide a decomposition of $K_{(6:2t)} \cup 2tK_6$ into a 6-cycle factor, $(6t-2)$ 4-cycle factors, and a 1-factor. Let $H'=K_{2t}$ and let $F_1,F_2, \ldots, F_{2t-1}$ a 1-factorization of $H'$. Without loss, we may assume that $F_1= \{0,1\}, \{2,3\}, \ldots, \{2t-2,2t-1\}$. Then on each edge of $F_1$, create a 2-cycle, $H_1=(0,1), H_2=(2,3) \ldots, H_{2t-1}=(2t-2,2t-1)$. Apply case 1 to each of these cycles to obtain a set of four 4-cycle factors, one 6-cycle factor, and a 1-factor.

Now consider $F_i$ for $i= 2,3, \ldots, 2t-1$. On each of $F_i$, create a 2-cycle $H$. On each 2-cycle, let $d_1=1$ and $d_2=2$. Then $d \equiv 3 \pmod{6}$, so by Lemma \ref{Lemma2} $G[\Delta]$ produces a 4-cycle factor on $\bigcup_{i=0}^{2t-1} G_i$.

On each 2-cycle, let $d_1=0$ and $d_2=3$ to obtain another 4-cycle factor. Finally, on each 2-cycle, let $d_1=2$ and $d_2=1$ to obtain a third 4-cycle factor. Then on each $F_i$, we have covered each cross difference from groups $G_k$ to $G_{k+1}$ for $k=0,2,4, \ldots,2t-2$.

Furthermore, because we took a 1-factorization of $H'$, we have also covered cross differences between every pair of groups. This produces a total of $4+3(2t-2)=(6t-2)$ 4-cycle factors, one 6-cycle factor, and a 1-factor.

\end{proof}

\

The following result is obtained by using a variant of Wilson's Fundamental construction \cite{WI}.

For a cycle of length $a$, $C_a$, we give the subgraph, $C_{(w:a)}$ of $K_{(w:a)}$ obtained by giving each vertex of $C_a$ a {\it weight}. Let $w$ be a positive integer called the weight, and let $W=\{1,2, \ldots,w \}$. Let $V(C_{(w:a)})=\{\{x\} \times W | x \in V(C_a)\}$ and $E(C_{(w:a)})=\{\{(x_1,i), (x_2,j)\} | \{x_1,x_2\} \in E(C_a) \ \hbox{and} \\ \ i,j \in W\}$.

\begin{lemma}
\label{16a}
Let $G$ be a $C_a$-factorization of $K_{(x:t)}$. Suppose that for each cycle $C_a \in G$, there exists a $C_b$-factorization of $C_{(w:a)}$. Then the graph $K_{(wx:t)}$ has a $C_b$-factorization.
\end{lemma}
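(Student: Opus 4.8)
The plan is to realize $K_{(wx:t)}$ as the blow-up (vertex-inflation) of $K_{(x:t)}$ in which every vertex $v$ is replaced by the $w$ copies $\{v\}\times W$, and then to lift the given $C_a$-factorization $G$ through this blow-up using the hypothesized $C_b$-factorizations of the graphs $C_{(w:a)}$. First I would verify the structural identity: since $K_{(x:t)}$ has no edges inside a part, inflating each vertex to an independent set of size $w$ creates no intra-part edges, while between two distinct parts each of the $x\cdot x$ edges becomes a copy of $K_{w,w}$, so the two inflated parts induce $K_{wx,wx}$. Hence the inflation of $K_{(x:t)}$ is exactly $K_{(wx:t)}$. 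Moreover, the inflation of a single edge $\{x_1,x_2\}$ is precisely the complete bipartite graph joining $\{x_1\}\times W$ to $\{x_2\}\times W$, so the inflation of any cycle $C_a\in G$ is exactly the graph $C_{(w:a)}$ defined just before the statement, and the hypothesis applies to it verbatim.

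Next I would set up the bookkeeping. Write $G$ as factors $\mathcal F_1,\dots,\mathcal F_r$ with $r=x(t-1)/2$; each $\mathcal F_s$ is a vertex-disjoint union of $a$-cycles spanning $K_{(x:t)}$, and together the $\mathcal F_s$ partition $E(K_{(x:t)})$. Upon inflation, the graphs $C_{(w:a)}$ arising from the cycles of a fixed factor $\mathcal F_s$ are vertex-disjoint and collectively span $K_{(wx:t)}$, and over all $s$ their edge sets partition $E(K_{(wx:t)})$. A degree count shows $C_{(w:a)}$ is $2w$-regular, so any $C_b$-factorization of it has exactly $w$ factors; this count is uniform across all cycles, which is what makes the alignment step below legitimate.

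The combining step is the heart of the argument, though it is largely routine once disjointness is in hand. For each cycle $C$ of $\mathcal F_s$, fix by hypothesis a $C_b$-factorization of $C_{(w:a)}$ and label its $w$ factors $1,\dots,w$ arbitrarily. For each $k\in\{1,\dots,w\}$, take the $k$-th factor from every cycle of $\mathcal F_s$ and form their union; because the cycle-inflations are vertex-disjoint and together span $K_{(wx:t)}$, this union is a $2$-regular spanning subgraph of $K_{(wx:t)}$ all of whose components are $b$-cycles, i.e.\ a $C_b$-factor. This produces $w$ $C_b$-factors from $\mathcal F_s$, hence $rw=wx(t-1)/2$ in total, which matches the number of $C_b$-factors a factorization of $K_{(wx:t)}$ must have. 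Since the cycle-inflations partition $E(K_{(wx:t)})$ and each hypothesized factorization exhausts the edges of its own $C_{(w:a)}$, these $rw$ factors partition the edge set, yielding the desired $C_b$-factorization.

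I expect the only genuinely delicate point to be the verification that inflating $K_{(x:t)}$ gives $K_{(wx:t)}$ and that each inflated cycle is exactly $C_{(w:a)}$, so that the hypothesis can be invoked without modification. The alignment across cycles is essentially free: distinct cycles in a common factor inflate to vertex-disjoint subgraphs, so selecting the $k$-th factor from each requires no careful matching of copy-labels, and the uniform factor count $w$ guarantees the selection is well defined.
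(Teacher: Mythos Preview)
Your proposal is correct and follows essentially the same approach as the paper: both arguments inflate each vertex of $K_{(x:t)}$ by weight $w$, observe that each $C_a$-factor $\mathcal F_s$ lifts to a vertex-disjoint union of copies of $C_{(w:a)}$ spanning $K_{(wx:t)}$, apply the hypothesized $C_b$-factorization to each such copy, and align the $w$ factors across cycles to obtain $wx(t-1)/2$ $C_b$-factors partitioning $E(K_{(wx:t)})$. Your write-up is in fact more careful than the paper's about verifying that the blow-up of $K_{(x:t)}$ is exactly $K_{(wx:t)}$ and that the factor count $w$ is uniform across all inflated cycles.
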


\begin{proof}
    Since $K_{(x:t)}$ has $xt$ vertices and $x^2 \binom{t}{2}$ edges, and each $C_a$-factor uses $xt$ edges, it follows that there are $\frac{x^2 \binom{t}{2}}{xt}= \frac{x(t-1)}{2}$ factors in G. For each cycle $C_a \in F_i$, $i=1,2, \ldots, \frac{x(t-1)}{2}$, define a $C_b$-factorization of $C_{(w:a)}$. Consider any edge $\{(x_1,i),(x_2,j)\} \in K_{(wx:t)}$. The edge $\{x_1,x_2\}$ is contained in exactly one cycle, $C_a \in G$, since $G$ is a $C_a$-factorization of $K_{(x:t)}$. Then, the edge $\{(x_1,i),(x_2,j)\}$ is contained in exactly one cycle, $C_b$, in the $C_b$-factorization of $C_{(w:a)}$. Each $C_b$-factorization of $C_{(w:a)}$ has $w$ factors, so each $F_i$ gives rise to $w$ $C_b$-factors. Thus, our decomposition produces $\frac{wx(t-1)}{2}$ $C_b$-factors.
\end{proof}

\begin{lemma}
\label{16b}
    If $x \equiv 2 \ \hbox{or} \ 4 \pmod{6}$, there exists a $C_{2x}$-factorization of $C_{(6:x)}$
\end{lemma}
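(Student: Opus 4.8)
The plan is to realize $C_{(6:x)}$ as the cycle $C_x=(0,1,\ldots,x-1)$ with each vertex $i$ blown up into a group $G_i$ of six vertices $(i,0),\ldots,(i,5)$, with indices on groups taken mod $x$, so that each edge $\{i,i+1\}$ of $C_x$ becomes the complete bipartite graph between $G_i$ and $G_{i+1}$. This is exactly the setting of Lemma~\ref{Lemma2} with $g=6$ and $H=C_x$: the $36$ edges joining $G_i$ to $G_{i+1}$ split into six perfect matchings, one for each cross difference $d\in\Z_6$, and a chosen list $(d_0,\ldots,d_{x-1})$ of cross differences (one per edge) produces a $2$-factor $G[\Delta]$ living entirely inside $C_{(6:x)}$.

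The key reduction is that, by part 2 of Lemma~\ref{Lemma2}, whenever $\sum_k d_k\equiv 3\pmod 6$ we have $d=3=g/2$ and $\gcd(3,6)=3$, so $G[\Delta]$ is exactly three cycles each of length $2x$, i.e.\ a $C_{2x}$-factor. Hence I would reduce the whole problem to producing a $6\times x$ matrix $M$ over $\Z_6$ such that \textbf{(i)} every column is a permutation of $\{0,1,2,3,4,5\}$, and \textbf{(ii)} every row sums to $3\pmod 6$. Condition (i) forces the six rows to use each cross difference exactly once on each edge, so the six resulting $C_{2x}$-factors partition $E(C_{(6:x)})$; condition (ii) makes each row a $C_{2x}$-factor. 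Note a parity constraint is automatic: each column sums to $15\equiv 3$, so the grand total is $3x$, which must be $\equiv 0\pmod 6$, i.e.\ $x$ must be even --- and this is precisely what $x\equiv 2,4\pmod 6$ supplies.

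Writing $x=2m$, I would build $M$ block-wise from
\[
A=\begin{pmatrix}0&3\\1&2\\2&1\\3&0\\4&5\\5&4\end{pmatrix},\qquad
B=\begin{pmatrix}0&0\\1&5\\2&4\\3&3\\4&2\\5&1\end{pmatrix}.
\]
Every column of $A$ and of $B$ is a permutation of $\{0,\ldots,5\}$; the row sums of $A$ are $3,3,3,3,9,9\equiv 3\pmod 6$, while those of $B$ are $0,6,6,6,6,6\equiv 0\pmod 6$. Taking $M=[\,A\mid B\mid\cdots\mid B\,]$ with $m-1$ copies of $B$ yields a $6\times x$ matrix whose columns are permutations and whose $r$-th row sum is $A_r+(m-1)B_r\equiv 3\pmod 6$. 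Applying Lemma~\ref{Lemma2} to each of the six rows then delivers the six $C_{2x}$-factors, completing the factorization.

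The construction is elementary, so the only real content is the boundary bookkeeping: verifying that ``every column a permutation'' genuinely forces each of the six matchings on each edge to appear in exactly one factor, and confirming that the residue of $x$ modulo $3$ plays no role here (the value $x\bmod 3$ matters only through $\gcd(6,2x)=2$, which is what motivates stating the lemma in this range). For the smallest degenerate case $x=2$, where $C_x$ is a $2$-cycle, the matrix $A$ alone works, with Lemma~\ref{Lemma2} applied at $x=2$ producing $4$-cycles.
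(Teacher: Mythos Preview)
Your proof is correct and follows the same overall strategy as the paper: reduce to building a $6\times x$ array over $\mathbb{Z}_6$ whose columns are permutations of $\{0,\ldots,5\}$ and whose row sums are all $\equiv 3\pmod 6$, then apply Lemma~\ref{Lemma2} to each row. The paper carries this out by concatenating a $6\times 2$ block (its table $A$, used for $x=2$) and a $6\times 4$ block (its table $B$, used for $x=4$), each with every row sum $\equiv 3$, and then handles $x=6t+2$ and $x=6t+4$ separately so that an odd total number of blocks is used (giving total row sum $3(2t+1)\equiv 3$). Your choice of building blocks---one $6\times 2$ block with row sums $\equiv 3$ and one $6\times 2$ block with row sums $\equiv 0$---is a bit cleaner: a single copy of your $A$ followed by $m-1$ copies of your $B$ works uniformly for every even $x=2m\ge 2$, so the case split disappears and the argument in fact covers all even $x$, not just $x\equiv 2,4\pmod 6$. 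The gain is modest but genuine: a one-line construction in place of two cases.
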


\begin{proof}

For the cycle $C_x=(i_1, i_2, \ldots, i_x)$, we give a $C_{2x}$-factorization of $C_{(6:x)}$. Suppose there exists a $MRSM_{\mathbb Z_6} (S,6,x; \Sigma)$ with $S= \mathbb{Z}_6$. Then for each row of the matrix $(d_{1} d_{2} \ldots d_{x})$, build the graph $G[\Delta]$, where $\Delta = (D_{1}, D_{2}, \ldots, D_{x})$. If $d= \sum_{i=1}^{x} d_{i} \equiv 3 \pmod{6}$, then by Lemma \ref{Lemma2}, $G[\Delta]$ consists of a $2x$-cycle factor. Because each column of the matrix is a permutation of $S$, it follows that between any pair of groups $G_u$ and $G_v$, for $(u,v) \in C_x$, every edge with cross difference $d_u \in S$ appears exactly once in a $2x$-cycle. Thus we proceed by building the desired matrices. If $x=2$, use matrix $A$; given in Table \ref{table A}, and if $x=4$, use matrix $B$; given in Table \ref{table B}.

\begin{table}
\centering
\begin{tabularx}{0.8\textwidth} { 
  | >{\centering\arraybackslash}X 
  | >{\centering\arraybackslash}X 
  | >{\centering\arraybackslash}X | }
 \hline
 $(j_1,j_2)$ & $(j_2,j_1)$ \\
 \hline
 0 & 3 \\
 3 & 0 \\
 2 & 1 \\
 1 & 2 \\
 4 & 5 \\
 5 & 4 \\
\hline
\end{tabularx}
\caption{ A}
    \label{table A}
\end{table}

\begin{table}
\centering
\begin{tabularx}{0.8\textwidth} { 
  | >{\centering\arraybackslash}X 
  | >{\centering\arraybackslash}X 
  | >{\centering\arraybackslash}X
  | >{\centering\arraybackslash}X | }
 \hline
 $(j_1,j_2)$ & $(j_2,j_3)$ & $(j_3,j_4)$ & $(j_4,j_1)$ \\
 \hline
 0 & 0 & 0 & 3 \\
 3 & 3 & 3 & 0 \\
 2 & 2 & 4 & 1 \\
 4 & 1 & 2 & 2 \\
 5 & 4 & 1 & 5 \\
 1 & 5 & 5 & 4 \\
\hline
\end{tabularx}
\caption{B}
    \label{table B}
\end{table}
\

    If $x=6t+2$ and $t \geq 1$, then we build a $MRSM_{\mathbb Z_6} (S,6,x; \Sigma)$ $T$ by concatenating $t$ copies of matrix $B$ and $t+1$ copies of matrix $A$. Then $T$ has $4t+2(t+1)=6t+2$ columns. Furthermore, the number of copies of matrices used is $t+t+1=2t+1$, which is odd. Therefore, the sum of each row of $T$ is $3(2t+1) \equiv 3 \pmod{6}$. 

\
    If $x=6t+4$ and $t \geq 1$, then we build a $MRSM_{\mathbb Z_6} (S,6,x; \Sigma)$ $T'$ by concatenating $t$ copies of matrix  $A$ and $t+1$ copies of matrix $B$. Then $T'$ has $2t+4(t+1)=6t+4$ columns. As in the previous case, the number of copies of matrices used is $2t+1$, so the sum of each row of $T'$ is $3(2t+1) \equiv 3 \pmod{6}$. 
\end{proof}

\
\begin{theorem}
\label{1-gcd=2}
 There exists a solution to ${\hbox{HWP}}(6xt;6,2x;1,3xt-2)$, where $x\equiv 2 \ \hbox{or} \ 4 \ \hbox{(mod 6)}$.
\end{theorem}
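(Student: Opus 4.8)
The plan is to prove the base case $t=1$ directly and then bootstrap to all $t$ by a weighting argument, in each case splitting $K_{6xt}$ into the edges lying inside and across a suitable partition. First I would dispose of $x=2$ by quoting Theorem \ref{x=2}, which already gives $\hbox{HWP}(12t;6,4;1,6t-2)$, and henceforth assume $x\ge 4$, so that $x\equiv 2,4,8,$ or $10\pmod{12}$. The necessary conditions $6\mid v$, $2x\mid v$, and $\alpha+\beta=(v-2)/2$ with $v=6xt$ are immediate, so only the construction is at issue.

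For the base case I would establish $\hbox{HWP}(6x;6,2x;1,3x-2)$, i.e. a decomposition of $K_{6x}-F$ into one $C_6$-factor and $3x-2$ $C_{2x}$-factors. Partition the $6x$ vertices into $x$ groups of size $6$, so that $K_{6x}$ is the edge-disjoint union of the cross graph $K_{(6:x)}$ and $x$ copies of $K_6$. Organize the cross edges by the Hamilton decomposition of $2K_x$ from Lemma \ref{2Kn}, which produces the ordered family $(T_0,\dots,T_{x-2})$ of modified row-sum matrices supplied by Lemma \ref{Tables1a} (when $x\equiv 4,8\pmod{12}$) or Lemma \ref{Tables1b} (when $x\equiv 2,10\pmod{12}$). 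By the Corollary to Lemma \ref{Lemma2}, every row whose entries sum to $3\pmod 6$ yields a $C_{2x}$-factor, and the covering property (item 3, resp. item 4) guarantees that each cross difference is used exactly once. The few designated rows with even sums are handled using the inside edges: Lemma \ref{mix} applied to the even row of $T_{x-2}$ (absorbing $f_1\cup f_2$ from every $K_6$) produces two more $C_{2x}$-factors, and in the $x\equiv 2,10\pmod{12}$ case Lemma \ref{row0s} applied to the all-zero row of $T_0$ (absorbing $f_5$) produces one further $C_{2x}$-factor together with the deleted $1$-factor $F$. A short count gives exactly $3x-2$ $C_{2x}$-factors. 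Finally, the as-yet-unused inside edges are, on each $K_6$, the union $f_3\cup f_4\cup f_5$ (resp. $f_3\cup f_4$), which for the $1$-factorization of Figure \ref{1-factors} is a Hamilton $6$-cycle (together with a perfect matching in the first case); these $6$-cycles over all $x$ groups assemble into the single $C_6$-factor, and the leftover matchings supply $F$ when it was not already produced.

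With the base case in hand, for $t\ge 2$ I would partition $K_{6xt}$ into $t$ super-groups of size $6x$, so that $K_{6xt}$ is the edge-disjoint union of $K_{(6x:t)}$ and $t$ copies of $K_{6x}$. By Theorem \ref{equip}, $K_{(x:t)}$ has a resolvable $C_x$-factorization (here $x$ is even with $x\ge 4$ and $x\not\equiv 0\pmod 6$, so none of the listed exceptions occur). Weighting each vertex by $w=6$ and using Lemma \ref{16b} to $C_{2x}$-factorize each resulting $C_{(6:x)}$, Lemma \ref{16a} converts this into a $C_{2x}$-factorization of $K_{(6x:t)}$ with $3x(t-1)$ factors. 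On each of the $t$ inside copies of $K_{6x}$ I would apply the base-case decomposition and align the copies factor-by-factor, so that the $i$-th factor of each copy unites into a spanning factor of $K_{6xt}$; this contributes $3x-2$ spanning $C_{2x}$-factors, one spanning $C_6$-factor, and one spanning $1$-factor $F$. In total this gives $3x(t-1)+(3x-2)=3xt-2$ factors of $C_{2x}$ and one factor of $C_6$, as required.

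The main obstacle is the base case. One must arrange the $x-1$ matrices so that precisely the right number of rows sum to $3\pmod 6$ while the remaining rows have even sums correctable using only the inside edges, and must simultaneously guarantee that the inside edges surviving Lemmas \ref{mix} and \ref{row0s} assemble into exactly one $C_6$-factor plus the $1$-factor to be deleted. This is exactly the bookkeeping that Lemmas \ref{Tables1a}, \ref{Tables1b}, \ref{mix}, and \ref{row0s} were designed to control, so the difficulty lies in verifying that their outputs combine with the correct multiplicities rather than in any single hard estimate; the weighting step for $t\ge 2$ is then essentially routine.
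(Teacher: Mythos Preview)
Your proposal is correct and follows essentially the same route as the paper: dispose of $x=2$ via Theorem \ref{x=2}; for $t=1$ split $K_{6x}$ as $K_{(6:x)}\cup xK_6$, use the MRSMs of Lemmas \ref{Tables1a}/\ref{Tables1b} together with Lemmas \ref{mix} and \ref{row0s} to produce the $3x-2$ $C_{2x}$-factors and leave exactly $f_3\cup f_4$ (plus $f_5$ when $x\equiv 4,8\pmod{12}$) inside each $K_6$ for the $C_6$-factor and the deleted $1$-factor; then for $t\ge 2$ apply Theorem \ref{equip} and Lemmas \ref{16a}/\ref{16b} to $C_{2x}$-factorize $K_{(6x:t)}$ and finish with the base case on each part. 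Your bookkeeping of which inside $1$-factors survive in each congruence class and where the deleted $1$-factor $F$ originates matches the paper's.
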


\begin{proof}
If $x=2$ the result was proved in Theorem \ref{x=2}. 

Suppose $x>2$ and $t=1$. We will prove that there exists a solution to  $\hbox{HWP}(6x;6,2x;1,3x-2)$. 

\

Consider the equipartite graph $K_{(6:x)}$. As described in the discussion following Lemma \ref{Lemma2}, each row of a given $MRSM_{\mathbb{Z}_6} (S,3,x;\Sigma)$ $T$ corresponds to a subgraph, $G[\Delta]$ of $K_{(6:x)}$. In particular, the row $(d_{i_1},d_{i_2}, \ldots, d_{i_x})$ gives rise to $G[\Delta]$, where $\Delta=(D_{i_1},D_{i_2}, \ldots, D_{i_x})$. In order to find a decomposition of $K_{(6:x)}$, we must ensure that we cover all edges of each cross difference between every pair of distinct parts $G_u$ and $G_v$ with $u,v \in \{0,1,\ldots, x-1\}$. Then to find a solution to $\hbox{HWP}(6x;6,2x;1,3x-2)$, we must also cover all inside edges $G_{i_k}[K_6]$ for $k=1,2, \ldots, x$. 

\

If $x \equiv 4,8 \pmod{12}$, then by Lemma \ref{Tables1a} there exists a set of $x-1$ $MRSM_{\mathbb{Z}_6} (S,3,x;\Sigma)$ such that the entries in each row of the $x-2$ matrices, $T_0,T_1, \ldots, T_{x-3}$ sum to $d \equiv 3 \pmod{6}$. Therefore, by Lemma \ref{Lemma2} each of these rows gives rise to a subgraph, $G[\Delta]$, which is a $2x$-cycle factor. The matrix $T_{x-2}$ contains two rows whose sum is $d \equiv 3 \pmod{6}$, which contribute two more $2x$-cycle factors. The remaining row of $T_{x-2}$ has sum $d \pmod{6}$, with $d$ even. Thus by Lemma \ref{mix}, $G[\Delta] {\bigcup_{k=1}^{x}} G_{i_k}[f_1 \cup f_2]$ has a decomposition into two more $2x$-cycle factors, which produces a total of $3(x-2)+2+2=(3x-2)$ $2x$-cycle factors.

\

If $x \equiv 2,10 \pmod{12}$, then by Lemma \ref{Tables1b} there exists a set of $x-1$ $MRSM_{\mathbb{Z}_6} (S,3,x;\Sigma)$ such that the entries in each row of the following $x-3$ matrices, $T_1,T_2, \ldots, T_{x-3}$ sum to $d \equiv 3 \pmod{6}$. Therefore, by Lemma \ref{Lemma2} each of these rows gives rise to a subgraph, $G[\Delta]$, which is a $2x$-cycle factor. The matrix $T_0$ contains two rows whose sum is $d \equiv 3 \pmod{6}$, which contribute two more $2x$-cycle factors. Matrix $T_0$ also contains a row whose sum is 0. Thus by Lemma \ref{row0s}, $G[\Delta] {\bigcup_{k=1}^{x}} G_{i_k}[f_5]$ has a decomposition into one more $2x$-cycle factor. The matrix $T_{x-2}$ contains two rows whose sum is $d \equiv 3 \pmod{6}$, which contribute two more $2x$-cycle factors; and it also contains a row whose sum is $d \pmod{6}$, $d$ even. Thus by Lemma \ref{mix}, $G[\Delta] {\bigcup_{k=1}^{x}} G_{i_k}[f_1 \cup f_2]$ has a decomposition into two more $2x$-cycle factors, which produces a total of $3(x-3)+2+1+2+2=(3x-2)$ $2x$-cycle factors. 

\

Recall that the columns of each matrix $T_j$, $j = 0, 1, \ldots, x-2$, given in Lemmas \ref{Tables1a} and \ref{Tables1b} are induced by the $x$ edges in the Hamilton cycle $C_j$ that $T_j$ corresponds to. Furthermore, the $x-1$ cycles $C_j$, $j= 0,1,\ldots, x-2$ decompose $2K_x$. Therefore, among the set of matrices $(T_0,T_1,\ldots, T_{x-1})$, we see each edge $\{u,v\} \in K_x$ exactly twice as a column index. However, the union of these two columns contains every cross difference $d_u \in \{0,1,2,3,4,5\}$ exactly once. Thus we have covered all edges between every pair of distinct parts $G_u$ and $G_v$ exactly once. Furthermore, we have also used the edges $G_{i_k}[f_1 \cup f_2]$ for $k=1,2, \ldots, x$. The remaining edges are exactly $G_{i_k}[f_3 \cup f_4 \cup f_5]$ for $k=1,2, \ldots, x$, which are illustrated in Figure \ref{1-factors}. It is easy to see that these edges can be decomposed into a 6-cycle factor $(f_3 \cup f_4)$ and a 1-factor $(f_5)$. Thus we have a solution to $\hbox{HWP}(6x;6,2x;1,3x-2)$.

\

 Now suppose $t\geq2$, we will prove that there exists a solution to $\hbox{HWP}(6xt;6,2x;1,3xt-2)$. 

\

Consider the equipartite graph $K_{(x:t)}$, which has $t$ parts of size $x$, and and let $x\equiv 2 \ \hbox{or} \ 4 \ \pmod{6}$. By Theorem \ref{equip}, we know that $K_{(x:t)}$ has a resolvable $C_x$-factorization. We know by Lemma \ref{16b} that there exists a $C_{2x}$-factorization of $C_{(6:x)}$. So by Lemma \ref{16a}, there exists a $C_{2x}$-factorization of $K_{(6x:t)}$. This has $3x(t-1)$ $C_{2x}$-factors. On each part of $K_{(6x:t)}$, use the solution to $\hbox{HWP}(6x;6,2x;1,3x-2)$. This produces a total of $3xt-2$ $C_{2x}$-factors, one $C_6$-factor, and a 1-factor.
\end{proof}

\subsection{$\gcd(6,N)=2$ and $x \equiv 1 \ \hbox{or} \ 5 \pmod{6}$}

Suppose $X$ is a graph on $n$ vertices. Let $\{a,b\}$ and $\{c,d\}$ be a pair of independent edges of $X$. Let $C$ be a 4-cycle on the vertices $\{a,b,c,d\}$ which alternates between edges in $X$ and edges not in $X$. Then, we will refer to $E(X) \oplus E(C)$ as a $4$-cycle switch of $X$ with $C$. Similarly, let $\{a,b\}$, $\{c,d\}$, and $\{e,f\}$ be a set of three independent edges of $X$. Let $C$ be a 6-cycle on the vertices $\{a,b,c,d,e,f\}$ which alternates between edges in $X$ and edges not in $X$. Then, we will refer to $E(X) \oplus E(C)$ as a $6$-cycle switch of $X$ with $C$.

\begin{lemma}
\label{Lemma A}
    If $X$ is a cycle of length $n$, and $(a,b)$, $(c,d)$, and $(e,f)$ are three independent edges in $X$, then a 6-cycle switch of $X$ with $C=(a,b,e,f,c,d)$ produces another cycle of length $n$.
\end{lemma}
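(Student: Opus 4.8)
The plan is to compute the symmetric difference $E(X)\oplus E(C)$ explicitly, show it is $2$-regular, and then show it is connected. First note that the $6$-cycle $C=(a,b,e,f,c,d)$ has edge set $\{\{a,b\},\{b,e\},\{e,f\},\{f,c\},\{c,d\},\{d,a\}\}$; since a $6$-cycle switch requires $C$ to alternate between edges of $X$ and non-edges of $X$, the edges $\{a,b\},\{e,f\},\{c,d\}$ (the three given independent edges) lie in $X$, while $\{b,e\},\{f,c\},\{d,a\}$ do not. Hence $X':=E(X)\oplus E(C)$ is obtained from $X$ by \emph{deleting} $\{a,b\},\{c,d\},\{e,f\}$ and \emph{inserting} $\{b,e\},\{c,f\},\{a,d\}$. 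A one-line local degree check at each of $a,b,c,d,e,f$ (every other vertex of $X$ is untouched) shows each loses one incident edge and gains one, so $X'$ is $2$-regular; therefore $X'$ is a disjoint union of cycles spanning all $n$ vertices, and it only remains to prove that $X'$ is a single cycle.

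For connectivity I would use that the three edges occur around $X$ in the cyclic order indicated by the labeling, namely $a,b,c,d,e,f$. Deleting the three independent edges from the $n$-cycle $X$ then leaves three vertex-disjoint paths: $P_1$ from $b$ to $c$, $P_2$ from $d$ to $e$, and $P_3$ from $f$ to $a$ (reinserting $\{a,b\},\{c,d\},\{e,f\}$ recovers $X$). The three new edges splice these paths end to end: starting at $a$, traverse $P_3$ to $f$, cross the new edge $\{f,c\}$ to $c$, traverse $P_1$ to $b$, cross $\{b,e\}$ to $e$, traverse $P_2$ to $d$, and cross $\{d,a\}$ back to $a$. This closed walk uses each of $P_1,P_2,P_3$ and each new edge exactly once, so it meets every vertex of $X$ exactly once; hence $X'$ is a single cycle of length $n$, as required.

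A cleaner way to package the connectivity step, which I would use to organize the write-up, is to contract each path $P_i$ to the pair of its endpoints. Then the number of cycles of $X'$ equals the number of cycles in the union of the ``same-path'' perfect matching $M_{\mathrm{path}}=\{\{b,c\},\{d,e\},\{f,a\}\}$ with the inserted matching $M_{\mathrm{new}}=\{\{b,e\},\{c,f\},\{a,d\}\}$ on the six marked vertices. Two edge-disjoint perfect matchings on six vertices always union to a single $6$-cycle (the only way to partition $6$ into even parts $\ge 4$ is $6$ itself), and here $M_{\mathrm{path}}\cap M_{\mathrm{new}}=\emptyset$, so the union is one $6$-cycle and $X'$ is connected.

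The main obstacle is exactly the disjointness $M_{\mathrm{path}}\cap M_{\mathrm{new}}=\emptyset$, i.e.\ ensuring none of $\{b,e\},\{c,f\},\{a,d\}$ is already a pair of endpoints of a single deleted-edge path. This is precisely where the cyclic order of the six vertices is essential: if the edges occurred in the reversed order (so that, say, $b$ and $e$ were the two ends of one path), the new edge $\{b,e\}$ would close that path into a short cycle and $X'$ would split into several cycles. I would therefore state the ordering convention at the very start, reading the listing $(a,b),(c,d),(e,f)$ as the order in which the edges are encountered when traversing $X$, and note that the within-pair orientation is immaterial since it does not affect the disjointness of the two matchings.
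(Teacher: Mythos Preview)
Your proof is correct and follows essentially the same approach as the paper: both decompose $X$ as $(a,b),P_1,(c,d),P_2,(e,f),P_3$ with $P_1,P_2,P_3$ the connecting paths, and then trace the resulting single $n$-cycle after the switch. Your additional $2$-regularity check and matching reformulation are correct extras, and your explicit discussion of the cyclic-order convention is a point the paper leaves implicit.
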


\begin{proof}
    The cycle $X$ can be described by the three independent edges and the three paths that separate them. $$X= (a,b), P_1, (c,d), P_2, (e,f), P_3$$

where $P_1$ is a $b-c$ path, $P_2$ is a $d-e$ path, and $P_3$ is an $f-a$ path.

Then, $E(X) \oplus E(C)$  produces the following cycle of length $n$: $(a,d), P_2, (e,b), P_1, (c,f), P_3$.
\end{proof}

\begin{lemma}
\label{Lemma B}
    If $X$ is a cycle of length $n$, and $(a,b)$, $(c,d)$, and $(e,f)$ are three independent edges in $X$, then a 6-cycle switch of $X$ with $C=(a,b,c,d,e,f)$ produces a set of three independent spanning cycles.
\end{lemma}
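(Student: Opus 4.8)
The plan is to mirror the proof of Lemma~\ref{Lemma A}: decompose $X$ along its three independent edges, then compute the effect of the symmetric difference $E(X)\oplus E(C)$. The crucial difference from Lemma~\ref{Lemma A} is that the \emph{consecutive} switch $C=(a,b,c,d,e,f)$ will close each connecting path into its own cycle rather than re-splicing the three paths into a single spanning cycle. First I would record the same canonical description of $X$ that was used in Lemma~\ref{Lemma A}, namely
$$X=(a,b),\, P_1,\, (c,d),\, P_2,\, (e,f),\, P_3,$$
where $P_1$ is a $b$--$c$ path, $P_2$ is a $d$--$e$ path, and $P_3$ is an $f$--$a$ path. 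These three paths are internally disjoint segments of $X$ whose vertex sets together exhaust $V(X)$.

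Next I would identify the edge set of $C=(a,b,c,d,e,f)$ as $\{(a,b),(b,c),(c,d),(d,e),(e,f),(f,a)\}$ and note that the three independent edges $(a,b),(c,d),(e,f)$ lie in $E(X)$ while $(b,c),(d,e),(f,a)$ do not; this is exactly the alternation required for the switch to be defined. Forming $E(X)\oplus E(C)$ therefore deletes $(a,b),(c,d),(e,f)$ from $X$ and inserts the three chords $(b,c),(d,e),(f,a)$.

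The key step is to track how these deletions and insertions act on the path structure. Deleting $(a,b),(c,d),(e,f)$ from the single cycle $X$ leaves precisely the three vertex-disjoint paths $P_1$, $P_2$, and $P_3$. Each inserted chord joins the two endpoints of one of these paths: $(b,c)$ closes $P_1$, $(d,e)$ closes $P_2$, and $(f,a)$ closes $P_3$. Hence each of the three resulting components is a cycle, and since $P_1,P_2,P_3$ were mutually vertex-disjoint and together covered $V(X)$, the three cycles are mutually vertex-disjoint and span $V(X)$. This is exactly a set of three independent spanning cycles. I would also note (or verify directly) that every vertex retains degree $2$ under the symmetric difference, confirming that the output is a disjoint union of cycles.

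The argument is essentially mechanical, so the main point requiring care—and the instructive contrast with Lemma~\ref{Lemma A}—is recognizing that the consecutive ordering $(a,b,c,d,e,f)$ reconnects each path to \emph{itself} rather than to its neighbors, yielding three short cycles in place of one spanning cycle. The alternation built into the definition of a $6$-cycle switch guarantees $(b,c),(d,e),(f,a)\notin E(X)$, which forces each $P_i$ to have length at least two and hence each output cycle to have length at least three, so no degenerate (multi-edge) components arise.
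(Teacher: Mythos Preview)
Your argument is correct and follows essentially the same route as the paper: decompose $X$ as $(a,b),P_1,(c,d),P_2,(e,f),P_3$ and observe that the symmetric difference removes the three chosen edges and inserts $(b,c),(d,e),(f,a)$, closing each $P_i$ into its own cycle. Your write-up is in fact more careful than the paper's, since you explicitly verify the alternation condition, the vertex-disjointness and spanning property, and that each resulting cycle has length at least three.
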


\begin{proof}
    The cycle $X$ can be described by the three independent edges and the three paths that separate them. $$X= (a,b), P_1, (c,d), P_2, (e,f), P_3$$

where $P_1$ is a $b-c$ path, $P_2$ is a $d-e$ path, and $P_3$ is a $f-a$ path.

Then, $E(X) \oplus E(C)$  produces the following three independent spanning cycles:  
$$(f,a), \Bar{P_3}$$
$$(b,c), \Bar{P_1}$$
$$(d,e), \Bar{P_2},$$
\end{proof}

where $\Bar{P_i}$ denotes the path in $X$ following the opposite direction of $P_i$.

\begin{lemma}
\label{Lemma C}
    If $X$ is a cycle of length $n$, and $(a,b)$ and $(c,d)$ are two independent edges in $X$, then a 4-cycle switch of $X$ with $C=(b,a,c,d)$ produces another cycle of length $n$.
\end{lemma}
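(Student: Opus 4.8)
The plan is to mirror the arguments used for Lemmas \ref{Lemma A} and \ref{Lemma B}: decompose the cycle $X$ according to its two prescribed independent edges, and then compute the symmetric difference $E(X) \oplus E(C)$ explicitly. Since $\{a,b\}$ and $\{c,d\}$ are independent edges of $X$, all four of $a,b,c,d$ are distinct and these edges split $X$ into two nonempty paths. After relabelling so that the cyclic order along $X$ is $a, b, \ldots, c, d, \ldots$, I would write
$$X = (a,b), P_1, (c,d), P_2,$$
where $P_1$ is a $b$--$c$ path and $P_2$ is a $d$--$a$ path. The switching $4$-cycle $C=(b,a,c,d)$ has edge set $\{a,b\},\{a,c\},\{c,d\},\{b,d\}$, and by construction this alternates between edges of $X$ (namely $\{a,b\}$ and $\{c,d\}$) and edges not in $X$ (namely $\{a,c\}$ and $\{b,d\}$), so it is a legitimate $4$-cycle switch.

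Next I would perform the symmetric difference. Taking $E(X)\oplus E(C)$ deletes $\{a,b\}$ and $\{c,d\}$ and inserts $\{a,c\}$ and $\{b,d\}$, while the paths $P_1$ and $P_2$ remain intact. Tracing the resulting edges, I start at $a$, use the new edge $\{a,c\}$ to reach $c$, traverse $P_1$ in reverse from $c$ to $b$, use the new edge $\{b,d\}$ to reach $d$, and finally traverse $P_2$ from $d$ back to $a$. This yields the closed walk
$$(a,c), \bar{P_1}, (b,d), P_2,$$
which visits each vertex of $X$ exactly once and is therefore a single spanning cycle. Because the switch removes two edges and adds two edges, the length is preserved at $n$.

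The one point requiring care---and the only place the conclusion could fail---is confirming that the output is a single cycle rather than a pair of shorter cycles. This is precisely where the specified cyclic order of $C$ matters: the alternative pairing, corresponding to $C=(a,b,c,d)$ and adding $\{a,d\}$ and $\{b,c\}$, would instead seal $P_2$ and $P_1$ each into its own cycle, producing two independent cycles---the $4$-cycle analogue of the splitting phenomenon observed in Lemma \ref{Lemma B}. Thus the heart of the argument is the observation that joining $a$ to $c$ and $b$ to $d$ concatenates $P_1$ and $P_2$ end-to-end into a single path before closing it, whereas the other pairing would close each path off separately. The explicit trace above verifies the former behaviour, which completes the proof.
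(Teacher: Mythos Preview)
Your proof is correct and takes essentially the same approach as the paper: you write $X=(a,b),P_1,(c,d),P_2$ with $P_1$ a $b$--$c$ path and $P_2$ a $d$--$a$ path, and obtain the identical output cycle $(a,c),\bar{P_1},(b,d),P_2$. Your added discussion of why the alternative pairing $C=(a,b,c,d)$ would instead split $X$ into two cycles is a nice clarification that the paper omits. One small remark: your phrase ``after relabelling'' is slightly misleading, since the labels $a,b,c,d$ are fixed by the specified $C=(b,a,c,d)$ and cannot be freely permuted; the paper simply takes the decomposition $X=(a,b),P_1,(c,d),P_2$ as the intended reading of the hypothesis, and you should do the same rather than suggest a WLOG reduction.
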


\begin{proof}
    The cycle $X$ can be described by the two independent edges and the two paths that separate them. $$X= (a,b), P_1, (c,d), P_2$$

where $P_1$ is a $b-c$ path, $P_2$ is a $d-a$ path.

Then, $E(X) \oplus E(C)$  produces the following cycle of length $n$: $(a,c), \Bar{P_1}, (b,d), P_2$, where $\Bar{P_1}$ denotes the path in $X$ following the opposite direction of $P_1$.
\end{proof}

\

In \cite{WE}, the following result was given as a Remark.

\begin{lemma}
\label{Remark 4.4}
Suppose $C_1$ and $C_2$ are vertex disjoint cycles in a graph $X$, and $\{x_i, y_i\} \in E(C_i)$ for $i \in \{1,2\}$. If $C=x_1 y_1 x_2 y_2$ is a cycle of length four in $X$, and the edges $\{y_1, x_2\}$ and $\{y_2, x_1\}$ are not $E(C_1) \cup E(C_2)$, then the subgraph of $X$ whose edge set is the symmetric difference $(E(C_1) \cup E(C_2)) \oplus E(C)$ is a single cycle.     
\end{lemma}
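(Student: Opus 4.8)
The plan is to evaluate the symmetric difference $(E(C_1)\cup E(C_2))\oplus E(C)$ explicitly and then exhibit the resulting edge set as a single spanning cycle by tracing it out. First I would record that, since $C=x_1y_1x_2y_2$, its four edges are $\{x_1,y_1\}$, $\{y_1,x_2\}$, $\{x_2,y_2\}$, and $\{y_2,x_1\}$. Of these, $\{x_1,y_1\}\in E(C_1)$ and $\{x_2,y_2\}\in E(C_2)$ belong to $E(C_1)\cup E(C_2)$, whereas by hypothesis the two ``cross'' edges $\{y_1,x_2\}$ and $\{y_2,x_1\}$ do not. Hence the symmetric difference deletes exactly $\{x_1,y_1\}$ and $\{x_2,y_2\}$ from $E(C_1)\cup E(C_2)$ and inserts exactly $\{y_1,x_2\}$ and $\{y_2,x_1\}$; call the resulting edge set $E'$.

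Next I would use the elementary fact that deleting a single edge of a cycle leaves a Hamilton path of that cycle. Deleting $\{x_1,y_1\}$ from $C_1$ yields a path $P_1$ that traverses every vertex of $C_1$ and has endpoints $x_1$ and $y_1$; likewise deleting $\{x_2,y_2\}$ from $C_2$ yields a path $P_2$ on all vertices of $C_2$ with endpoints $x_2$ and $y_2$. Thus $E'=E(P_1)\cup E(P_2)\cup\{\{y_1,x_2\},\{y_2,x_1\}\}$. Because $C_1$ and $C_2$ are vertex disjoint, $P_1$ and $P_2$ share no vertex, and the two inserted edges join their endpoints across the two parts.

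Then I would trace the graph on vertex set $V(C_1)\cup V(C_2)$ with edge set $E'$, starting at $x_1$: follow $P_1$ to $y_1$, cross along $\{y_1,x_2\}$ to $x_2$, follow $P_2$ to $y_2$, and return along $\{y_2,x_1\}$ to $x_1$. This closed walk uses each edge of $E'$ exactly once and visits every vertex of $V(C_1)\cup V(C_2)$ exactly once; equivalently, every vertex has degree $2$ in $(V(C_1)\cup V(C_2),E')$ and the graph is connected, so it is a single cycle, as claimed.

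The argument is essentially bookkeeping, so there is no deep obstacle; the one point that needs care is the identification of which edges the symmetric difference removes versus adds. This is exactly where the hypothesis $\{y_1,x_2\},\{y_2,x_1\}\notin E(C_1)\cup E(C_2)$ is invoked: it guarantees the cross edges are genuinely new and are therefore \emph{added} rather than cancelling, so that the two Hamilton paths get reconnected end-to-end into one cycle instead of remaining split into two.
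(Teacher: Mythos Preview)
Your argument is correct. The paper does not actually supply a proof of this lemma; it merely cites it as a remark from \cite{WE} and states it without argument. Your explicit bookkeeping---identifying which two edges of $C$ lie in $E(C_1)\cup E(C_2)$ and which two do not, reducing to the two Hamilton paths $P_1$ and $P_2$, and then tracing the closed walk $x_1,P_1,y_1,x_2,P_2,y_2,x_1$---is exactly the standard verification and fills in what the paper leaves to the reference.
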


Similarly, we can extend this to the case of three vertex disjoint cycles.

\begin{lemma}
\label{Lemma D}
Suppose $C_1$, $C_2$, and $C_3$ are vertex disjoint cycles in a graph $X$, and $\{x_i, y_i\} \in E(C_i)$ for $i \in \{1,2,3\}$. If $C=x_1 y_1 x_2 y_2 x_3 y_3$ is a cycle of length six in $X$, and the edges $\{y_1, x_2\}$, $\{y_2, x_3\}$, and $\{y_3, x_1\}$ are not $E(C_1) \cup E(C_2) \cup E(C_3)$, then the subgraph of $X$ whose edge set is the symmetric difference $(E(C_1) \cup E(C_2) \cup E(C_3) \oplus E(C)$ is a single cycle.     
\end{lemma}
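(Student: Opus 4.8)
The plan is to mimic the two-cycle argument of Lemma~\ref{Remark 4.4}, carefully tracking which edges are deleted and which are inserted by the symmetric difference, and then to exhibit the resulting edge set as a single closed walk through all the vertices of $C_1\cup C_2\cup C_3$. This lemma is, in effect, the inverse of Lemma~\ref{Lemma B}: there a switching $6$-cycle split one cycle into three, whereas here the same kind of switch merges three cycles into one, the difference being the order in which the chosen edges appear along the switching cycle.

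First I would record the edge set of the switching cycle. Since $C=x_1y_1x_2y_2x_3y_3$, its six edges are $\{x_1,y_1\},\{y_1,x_2\},\{x_2,y_2\},\{y_2,x_3\},\{x_3,y_3\},\{y_3,x_1\}$. For each $i$, the edge $\{x_i,y_i\}$ lies in both $E(C_i)$ and $E(C)$, so it is deleted by the symmetric difference. The three ``connecting'' edges $\{y_1,x_2\}$, $\{y_2,x_3\}$, $\{y_3,x_1\}$ join vertices of distinct, vertex-disjoint cycles; hence they cannot lie in any $E(C_i)$ (which is precisely the stated hypothesis), and so each of them is inserted. No other edge of $C$ exists, and every remaining edge of $C_1,C_2,C_3$ is untouched.

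Next I would describe the surviving graph explicitly. Deleting $\{x_i,y_i\}$ from the cycle $C_i$ leaves a Hamiltonian path $P_i$ of $C_i$ with endpoints $x_i$ and $y_i$, and these three paths are pairwise vertex-disjoint because the $C_i$ are. Thus
$$(E(C_1)\cup E(C_2)\cup E(C_3))\oplus E(C)=E(P_1)\cup E(P_2)\cup E(P_3)\cup\{\{y_1,x_2\},\{y_2,x_3\},\{y_3,x_1\}\}.$$
Following this edge set from $x_1$, the walk $x_1\,P_1\,y_1\to x_2\,P_2\,y_2\to x_3\,P_3\,y_3\to x_1$ traverses every vertex of $C_1\cup C_2\cup C_3$ exactly once and returns to its start.

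Finally I would confirm that this closed walk is a single cycle rather than a union of shorter ones. Each vertex interior to some $P_i$ retains its two path-edges; each endpoint $x_i$ or $y_i$ loses the deleted edge $\{x_i,y_i\}$ but gains exactly one connecting edge, so it again has degree two. Since the three connecting edges chain the paths $P_1,P_2,P_3$ together end-to-end into a connected, $2$-regular subgraph, the symmetric difference is a single cycle. I expect no genuine obstacle here: the only point requiring care is the bookkeeping that the connecting edges automatically avoid $E(C_1)\cup E(C_2)\cup E(C_3)$, which forces them to be \emph{inserted} rather than deleted and which follows immediately from vertex-disjointness; the remainder is exactly the degree-and-connectivity check used in Lemma~\ref{Remark 4.4}.
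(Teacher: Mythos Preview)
Your proof is correct and complete. Note that the paper itself does not actually prove Lemma~\ref{Lemma D}: it merely introduces the lemma with the sentence ``Similarly, we can extend this to the case of three vertex disjoint cycles,'' signalling that the argument is the obvious three-cycle analogue of Lemma~\ref{Remark 4.4}. Your write-up is exactly that analogue---delete the three edges $\{x_i,y_i\}$ to obtain paths $P_i$, insert the three connecting edges, and chain the paths into one cycle---so it supplies precisely what the paper leaves implicit.
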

\

It is well known that a decomposition of $K_x$ into Hamilton cycles exists for all odd $x$. The result was given by Lucas in \cite{LU}, but it was attributed to Walecki. The construction is described in detail in \cite{AB}. We give the construction as follows.

\

Label the vertices of $K_x$ as $\{u_0,u_1, \hdots, u_{x-1}\}$, and let $\sigma$ be the permutation whose disjoint cycle decomposition is
$$\sigma=(u_0)(u_1,u_2,\hdots,u_{x-1}).$$

Let $H_1$ be the Hamilton cycle
$$H_1=(u_0,u_1,u_2,u_{x-1},u_3,u_{x-2},\hdots, u_{\frac{x-1}{2}},u_{\frac{x-1}{2}+2}, u_{\frac{x-1}{2}+1}), \ \hbox{and}$$

let $H_i={\sigma}^{i-1}(H_1)$, $i=1,2,\hdots,\frac{x-1}{2}$. Then our decomposition of $K_x$ is given by

$$H_1\bigoplus H_2\bigoplus, \cdots, \bigoplus H_{\frac{x-1}{2}}.$$

\

An example of $H_1$ for $x=9$ is given in Figure \ref{x=9}. It is easy to see that by rotating the cycle clockwise 3 times gives the desired decomposition. This is described by the permutations $\sigma$, $\sigma^2$, and $\sigma^3$.

\

\begin{figure}[h!]
\centering  
\usetikzlibrary {positioning}
\definecolor{mygray}{RGB}{70,80,70}
\begin{tikzpicture}[ 
every node/.style={fill=mygray, circle, inner sep=0.05cm}
]
  \begin{scope}[node distance=15mm and 15mm]
    \node (b) at (2,1) [label={above right:$u_0$}] {};
    \node (a) at (0,1) [label={above right:$u_7$}] {};       
    \node (c) at (4,1) [label={above right:$u_3$}] {};
    \node (d) at (2,3) [label={above right:$u_1$}] {};      
    \node (e) at (2,-1) [label={above right:$u_5$}] {};
    \node (f) at (0.5,2.4) [label={above right:$u_8$}] {}; 
    \node (g) at (3.4,2.4) [label={above right:$u_2$}] {};
    \node (h) at (0.5, -0.5) [label={above right:$u_6$}] {}; 
    \node (i) at (3.4,-0.5) [label={above right:$u_4$}] {};
  \end{scope}

\draw (b) -- (d);
\draw (d) -- (g);
\draw (g) -- (f);
\draw (f) -- (c);
\draw (c) edge [bend right] (a);
\draw (a) -- (i);
\draw (i) -- (h);
\draw (h) -- (e);
\draw (e) -- (b);

\end{tikzpicture}

\caption{A Hamilton cycle, $H_1$, of $K_9$.}
\label{x=9}
\end{figure}
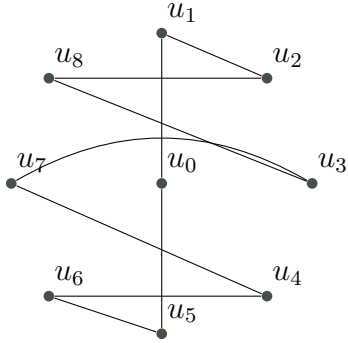

\

We will use the Walecki decomposition along with cycle switches to obtain the next result.

\begin{theorem}
\label{2-gcd=2}
 There exists a solution to ${\hbox{HWP}}(6x;6,2x;1,3x-2)$, where $x\equiv 1 \ \hbox{or} \ 5 \ \hbox{(mod 6)}$.
\end{theorem}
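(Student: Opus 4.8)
The plan is to realize $K_{6x}$ as the edge-disjoint union of the equipartite graph $K_{(6:x)}$ on $x$ groups $G_0,\ldots,G_{x-1}$ of size $6$ together with the $x$ inside copies of $K_6$, exactly as in the proof of Theorem~\ref{1-gcd=2}, and then to exploit that $x$ is odd (since $x\equiv 1,5\pmod 6$) by running the Walecki decomposition on the index graph $K_x$. Because the $\frac{x-1}{2}$ Walecki Hamilton cycles $H_1,\ldots,H_{(x-1)/2}$ partition $E(K_x)$, inflating each $H_j$ by weight $6$ partitions the cross edges of $K_{(6:x)}$ into $\frac{x-1}{2}$ copies of $C_{(6:x)}$ in the weighting notation of Lemma~\ref{16a}, one per Hamilton cycle. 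On each such copy I would assign cross differences to the $x$ edges of $H_j$ and invoke Lemma~\ref{Lemma2}: a difference list summing to $3\pmod 6$ yields a $C_{2x}$-factor, while one summing to $0\pmod 6$ yields a factor of six $x$-cycles. Arranging the six difference lists that cover all cross differences on $H_j$ into a table whose columns are permutations of $\mathbb{Z}_6$ forces the six row-sums to total $3x\equiv 3\pmod 6$; hence at most five rows can sum to $3$, and the natural choice gives five $C_{2x}$-factors and one factor of six $x$-cycles per Walecki cycle. This parity obstruction is precisely what distinguishes the present odd-$x$ case from the even-$x$ constructions of Lemmas~\ref{Tables1a} and~\ref{Tables1b}.

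After this first phase I will hold $\frac{5(x-1)}{2}$ finished $C_{2x}$-factors, together with $\frac{x-1}{2}$ leftover factors of $x$-cycles (in total $3(x-1)$ $x$-cycles) and all the untouched inside edges. I would then split each inside $K_6$ as $(f_1\cup f_2)\cup(f_3\cup f_4)\cup f_5$, where $f_1\cup f_2$ and $f_3\cup f_4$ are $6$-cycles and $f_5$ is a $1$-factor (Figure~\ref{1-factors}); declaring $\bigcup_k G_k[f_3\cup f_4]$ the required $C_6$-factor and $\bigcup_k G_k[f_5]$ the removed $1$-factor. What remains is the $(x+1)$-regular graph $R$ on the $6x$ vertices consisting of the $3(x-1)$ leftover $x$-cycles plus the $x$ disjoint inside $6$-cycles $\bigcup_k G_k[f_1\cup f_2]$. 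A short edge count shows $R$ has exactly $3x^2+3x$ edges, precisely enough for the $\frac{x+1}{2}$ remaining $C_{2x}$-factors, and $R$ carries a natural $2$-factorization into the $\frac{x-1}{2}$ factors of $x$-cycles and the single inside $C_6$-factor.

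The heart of the proof, and the step I expect to be hardest, is converting this $2$-factorization of $R$ into $\frac{x+1}{2}$ genuine $C_{2x}$-factors. Here the cycle-switch toolkit is used: I would repeatedly apply the merging switches of Lemma~\ref{Remark 4.4} and Lemma~\ref{Lemma D} to fuse pairs (or triples) of $x$-cycles into $2x$-cycles, using edges of the inside $6$-cycles $f_1\cup f_2$ as the connecting chords, and the rerouting/splitting switches of Lemmas~\ref{Lemma A}, \ref{Lemma B}, and \ref{Lemma C} to repair any factor whose cycles emerge with the wrong length, each lemma guaranteeing a single-cycle output under its stated independence hypotheses. The difficulty is essentially bookkeeping: the switch edges must be chosen so that (i) every cross difference and every inside edge of $f_1\cup f_2$ is used exactly once across the $\frac{x+1}{2}$ factors, (ii) each resulting factor is spanning with all cycles of length exactly $2x$, and (iii) the independence and non-membership conditions of the switch lemmas hold at every step. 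I would dispatch the generic case by a uniform switching pattern and check the smallest residue-specific instances, such as $x=5$ and $x=7$, directly, since these are likely to lie outside the range in which the uniform pattern has room to operate.
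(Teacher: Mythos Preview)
Your counting is right and the parity obstruction you identify is real, but the proposal has a genuine gap at exactly the point you flag as the heart of the proof, and in addition your overall setup differs from the paper's in a way that matters.

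First, the structural difference. You realize $K_{6x}$ as $K_{(6:x)}\cup xK_6$ and run Walecki on the index graph $K_x$. The paper does the \emph{dual}: it takes $K_{6x}=K_{(x:6)}\cup 6K_x$, with six groups of size $x$, and runs Walecki \emph{inside} each copy of $K_x$. This is not a cosmetic choice. In the paper's picture every Walecki Hamilton cycle already has length $x$, and a single explicit $4$-cycle switch (Lemma~\ref{Remark 4.4}) between two copies of the same $H_k$ in paired groups $G_{i_1},G_{i_2}$ produces a $2x$-cycle directly, yielding the factors $\mathcal A_k,\mathcal B_k$. The cross edges between the six groups are then handled by assigning differences on $2$-cycles coming from a $1$-factorization of $K_6$, and the handful of mismatches are repaired by two further explicit switches (Lemmas~\ref{Lemma A}, \ref{Lemma B}, \ref{Lemma D}) on a single Hamilton cycle, giving $\mathcal C,\mathcal D,\mathcal E$. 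All switches are written down, their independence hypotheses checked, and the zero cross differences are tracked level by level to build the $C_6$-factor and the $1$-factor.

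Second, the gap in your route. After your first phase you hold $\tfrac{x-1}{2}$ factors of six $x$-cycles plus one inside $C_6$-factor, and you must rebuild these into $\tfrac{x+1}{2}$ $C_{2x}$-factors. A $4$-cycle switch merging two of your $x$-cycles $c_a,c_b$ over the same $H_j$ via inside edges \emph{removes} two cross edges of $H_j$ and \emph{adds} two inside edges of $f_1\cup f_2$; those removed cross edges do not vanish and must be reinserted into some other factor, which in turn displaces further edges. So you are not doing local merges but a global simultaneous rearrangement of $\tfrac{x+1}{2}$ two-factors, and you have not produced the pattern that does this. Calling it ``essentially bookkeeping'' understates the difficulty: you must specify, for each $H_j$, which cross edges leave, which inside edges enter, where the displaced cross edges land, and why the result is a union of $2x$-cycles and not longer or shorter ones, and the Lemma~\ref{mix} trick with $d_k^{\pm}$ does not balance when $x$ is odd. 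Without that explicit scheme the proposal is a plan, not a proof; the paper sidesteps the whole issue by choosing the other partition so that each switch is isolated and can be exhibited concretely.
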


\begin{proof}

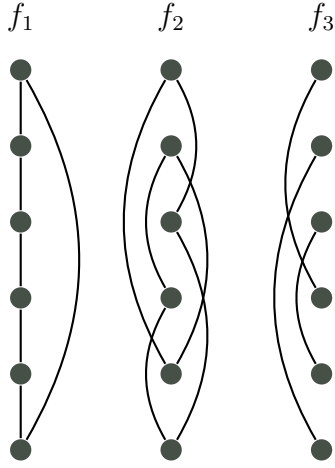
\begin{figure}[h!]
\centering
\definecolor{mygray}{RGB}{70,80,70}

\begin{tikzpicture}[thick,
  every node/.style={circle, inner sep=0.1cm},
  fsnode/.style={fill=mygray},
  gsnode/.style={fill=mygray},
  hsnode/.style={fill=mygray},
  isnode/.style={fill=mygray},
  jsnode/.style={fill=mygray}
]

% the vertices of f1
\begin{scope}[start chain=going below,node distance=7mm]
\foreach \i in {1,2,...,6}
  \node[fsnode,on chain] (f\i) {};
\end{scope}

% the vertices of f2
\begin{scope}[xshift=2cm,yshift=0cm,start chain=going below,node distance=7mm]
\foreach \i in {1,2,...,6}
  \node[gsnode,on chain] (g\i) {};
\end{scope}

% the vertices of f3
\begin{scope}[xshift=4cm,yshift=0cm,start chain=going below,node distance=7mm]
\foreach \i in {1,2,...,6}
  \node[hsnode,on chain] (h\i) {};
\end{scope}

% the set f1
\node [fit=(f1) (f6),label=above:$f_1$] {};
% the set f2
\node [fit=(g1) (g6),label=above:$f_2$] {};
% the set f3
\node [fit=(h1) (h6),label=above:$f_3$] {};

% the edges
\draw (f1) -- (f2);
\draw (f2) -- (f3);
\draw (f3) -- (f4);
\draw (f4) -- (f5);
\draw (f5) -- (f6);
\draw (f6) edge[bend right] node [left] {}(f1);
\draw (g1) edge[bend right] node [left] {}(g5);
\draw (g5) edge[bend right] node [left] {}(g2);
\draw (g2) edge[bend right] node [left] {}(g4);
\draw (g4) edge[bend right] node [left] {}(g6);
\draw (g6) edge[bend right] node [left] {}(g3);
\draw (g3) edge[bend right] node [left] {}(g1);
\draw (h1) edge[bend right] node [left] {}(h4);
\draw (h2) edge[bend right] node [left] {}(h6);
\draw (h3) edge[bend right] node [left] {}(h5);
\end{tikzpicture}
\caption{$K_6$ into two 6-cycles and a 1-factor.}
\label{K6}
\end{figure}

 Let $G_{0},G_{1},G_{2},G_{3},G_{4},G_{5}$ be the six parts of $K_{(x:6)}$ with $G_i$ having vertex set $\{(i,j): 0\leq j \leq x-1\}$. We will consider the decomposition of $K_{(x:6)} \cup 6K_x$. 

Let $F= \cup_{j=1}^5 F_j$, $F^*= \cup_{j=1}^5 F^*_j$, and $F'= \cup_{j=1}^5 F_j'$ be three different 1-factorizations of $K_6$, which are given in Figure \ref{9}. Define $c=(i_1,i_2)$ to be a 2-cycle corresponding to some edge $\{i_1,i_2\} \in F_1$. Then for any $c=(i_1,i_2)$, let $G_c = K_{(x:2)}$. Because $F_1=\{\{(0,1\}, \{2,3\}, \{4,5\}\}$, we consider the graph $G= G_{c_1} \cup G_{c_2} \cup G_{c_3}$ where $c_1=(0,1)$, $c_2=(2,3)$, and $c_3=(4,5)$.

Let $H = H_1\bigoplus H_2\bigoplus, \cdots, \bigoplus H_{\frac{x-1}{2}}$ be the Walecki decomposition of $K_x$, and let $k=1,2,\hdots,\frac{x-1}{2}$. Now, consider the following two cases, $k$ even and $k$ odd.

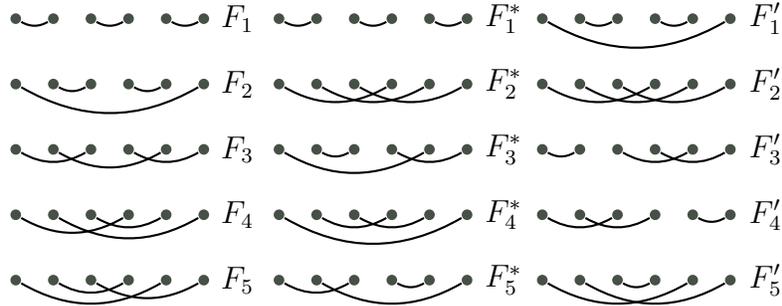
\begin{figure}[h!]
\centering
\definecolor{mygray}{RGB}{70,80,70}

\begin{tikzpicture}[thick,
  every node/.style={circle, inner sep=0.05cm},
  fsnode/.style={fill=mygray},
  gsnode/.style={fill=mygray},
  hsnode/.style={fill=mygray},
  isnode/.style={fill=mygray}
]

% the vertices of 1
\begin{scope}[start chain=going below,node distance=7mm]
  \node[fsnode,on chain] (f1) {};
  \node[fsnode,on chain] (f2) {};
  \node[fsnode,on chain] (f3) {};
  \node[fsnode,on chain] (f4) {};
  \node[fsnode,on chain] (f5) {};
\end{scope}

% the vertices of 2
\begin{scope}[xshift=.5cm,yshift=0cm,start chain=going below,node distance=7mm]
  \node[gsnode,on chain] (g1) {};
  \node[gsnode,on chain] (g2) {};
  \node[gsnode,on chain] (g3) {};
  \node[gsnode,on chain] (g4) {};
  \node[gsnode,on chain] (g5) {};
\end{scope}

% the vertices of 3
\begin{scope}[xshift=1cm,yshift=0cm,start chain=going below,node distance=7mm]
  \node[hsnode,on chain] (h1) {};
  \node[hsnode,on chain] (h2) {};
  \node[hsnode,on chain] (h3) {};
  \node[hsnode,on chain] (h4) {};
  \node[hsnode,on chain] (h5) {};
\end{scope}

% the vertices of 4
\begin{scope}[xshift=1.5cm,yshift=0cm,start chain=going below,node distance=7mm]
  \node[isnode,on chain] (i1) {};
  \node[isnode,on chain] (i2) {};
  \node[isnode,on chain] (i3) {};
  \node[isnode,on chain] (i4) {};
  \node[isnode,on chain] (i5) {};
\end{scope}

% the vertices of 3
\begin{scope}[xshift=2cm,yshift=0cm,start chain=going below,node distance=7mm]
  \node[hsnode,on chain] (j1) {};
  \node[hsnode,on chain] (j2) {};
  \node[hsnode,on chain] (j3) {};
  \node[hsnode,on chain] (j4) {};
  \node[hsnode,on chain] (j5) {};
\end{scope}

% the vertices of 4
\begin{scope}[xshift=2.5cm,yshift=0cm,start chain=going below,node distance=7mm]
  \node[isnode,on chain] (k1) [label={right:${F_1}$}] {};
  \node[isnode,on chain] (k2) [label={right:${F_2}$}] {};
  \node[isnode,on chain] (k3) [label={right:${F_3}$}] {};
  \node[isnode,on chain] (k4) [label={right:${F_4}$}] {};
  \node[isnode,on chain] (k5) [label={right:${F_5}$}] {};
\end{scope}

% the vertices of 3
\begin{scope}[xshift=3.5cm,yshift=0cm,start chain=going below,node distance=7mm]
  \node[hsnode,on chain] (l1) {};
  \node[hsnode,on chain] (l2) {};
  \node[hsnode,on chain] (l3) {};
  \node[hsnode,on chain] (l4) {};
  \node[hsnode,on chain] (l5) {};
\end{scope}

% the vertices of 4
\begin{scope}[xshift=4cm,yshift=0cm,start chain=going below,node distance=7mm]
  \node[isnode,on chain] (m1) {};
  \node[isnode,on chain] (m2) {};
  \node[isnode,on chain] (m3) {};
  \node[isnode,on chain] (m4) {};
  \node[isnode,on chain] (m5) {};
\end{scope}

% the vertices of 3
\begin{scope}[xshift=4.5cm,yshift=0cm,start chain=going below,node distance=7mm]
  \node[hsnode,on chain] (n1) {};
  \node[hsnode,on chain] (n2) {};
  \node[hsnode,on chain] (n3) {};
  \node[hsnode,on chain] (n4) {};
  \node[hsnode,on chain] (n5) {};
\end{scope}

% the vertices of 4
\begin{scope}[xshift=5cm,yshift=0cm,start chain=going below,node distance=7mm]
  \node[isnode,on chain] (o1) {};
  \node[isnode,on chain] (o2) {};
  \node[isnode,on chain] (o3) {};
  \node[isnode,on chain] (o4) {};
  \node[isnode,on chain] (o5) {};
\end{scope}

% the vertices of 4
\begin{scope}[xshift=5.5cm,yshift=0cm,start chain=going below,node distance=7mm]
  \node[isnode,on chain] (p1) {};
  \node[isnode,on chain] (p2) {};
  \node[isnode,on chain] (p3) {};
  \node[isnode,on chain] (p4) {};
  \node[isnode,on chain] (p5) {};
\end{scope}

% the vertices of 4
\begin{scope}[xshift=6cm,yshift=0cm,start chain=going below,node distance=7mm]
  \node[isnode,on chain] (q1) [label={right:${F_1^*}$}] {};
  \node[isnode,on chain] (q2) [label={right:${F_2^*}$}] {};
  \node[isnode,on chain] (q3) [label={right:${F_3^*}$}] {};
  \node[isnode,on chain] (q4) [label={right:${F_4^*}$}] {};
  \node[isnode,on chain] (q5) [label={right:${F_5^*}$}] {};
\end{scope}

% the vertices of 4
\begin{scope}[xshift=7cm,yshift=0cm,start chain=going below,node distance=7mm]
  \node[isnode,on chain] (r1) {};
  \node[isnode,on chain] (r2) {};
  \node[isnode,on chain] (r3) {};
  \node[isnode,on chain] (r4) {};
  \node[isnode,on chain] (r5) {};
\end{scope}

% the vertices of 4
\begin{scope}[xshift=7.5cm,yshift=0cm,start chain=going below,node distance=7mm]
  \node[isnode,on chain] (s1) {};
  \node[isnode,on chain] (s2) {};
  \node[isnode,on chain] (s3) {};
  \node[isnode,on chain] (s4) {};
  \node[isnode,on chain] (s5) {};
\end{scope}

% the vertices of 4
\begin{scope}[xshift=8cm,yshift=0cm,start chain=going below,node distance=7mm]
  \node[isnode,on chain] (t1) {};
  \node[isnode,on chain] (t2) {};
  \node[isnode,on chain] (t3) {};
  \node[isnode,on chain] (t4) {};
  \node[isnode,on chain] (t5) {};
\end{scope}

% the vertices of 4
\begin{scope}[xshift=8.5cm,yshift=0cm,start chain=going below,node distance=7mm]
  \node[isnode,on chain] (u1) {};
  \node[isnode,on chain] (u2) {};
  \node[isnode,on chain] (u3) {};
  \node[isnode,on chain] (u4) {};
  \node[isnode,on chain] (u5) {};
\end{scope}

% the vertices of 4
\begin{scope}[xshift=9cm,yshift=0cm,start chain=going below,node distance=7mm]
  \node[isnode,on chain] (v1) {};
  \node[isnode,on chain] (v2) {};
  \node[isnode,on chain] (v3) {};
  \node[isnode,on chain] (v4) {};
  \node[isnode,on chain] (v5) {};
\end{scope}

% the vertices of 4
\begin{scope}[xshift=9.5cm,yshift=0cm,start chain=going below,node distance=7mm]
  \node[isnode,on chain] (w1) [label={right:${F_1'}$}] {};
  \node[isnode,on chain] (w2) [label={right:${F_2'}$}] {};
  \node[isnode,on chain] (w3) [label={right:${F_3'}$}] {};
  \node[isnode,on chain] (w4) [label={right:${F_4'}$}] {};
  \node[isnode,on chain] (w5) [label={right:${F_5'}$}] {};
\end{scope}

% the edges
\draw (f1) edge[bend right] node [left] {} node {} (g1);
\draw (h1) edge[bend right] node [left] {} node {}  (i1);
\draw (j1) edge[bend right] node [left] {} node {}  (k1);
\draw (f2) edge[bend right] node [left] {} node {} (k2);
\draw (g2) edge[bend right] node [left] {} node {}  (h2);
\draw (i2) edge[bend right] node [left] {} node {}  (j2);
\draw (f3) edge[bend right] node [left] {} node {} (h3);
\draw (g3) edge[bend right] node [left] {} node {}  (j3);
\draw (i3) edge[bend right] node [left] {} node {}  (k3);
\draw (f4) edge[bend right] node [left] {} node {} (i4);
\draw (g4) edge[bend right] node [left] {} node {}  (k4);
\draw (h4) edge[bend right] node [left] {} node {}  (j4);
\draw (f5) edge[bend right] node [left] {} node {} (j5);
\draw (g5) edge[bend right] node [left] {} node {}  (i5);
\draw (h5) edge[bend right] node [left] {} node {}  (k5);

\draw (l1) edge[bend right] node [left] {} node {} (m1);
\draw (n1) edge[bend right] node [left] {} node {} (o1);
\draw (p1) edge[bend right] node [left] {} node {} (q1);
\draw (l2) edge[bend right] node [left] {} node {} (o2);
\draw (m2) edge[bend right] node [left] {} node {} (p2);
\draw (n2) edge[bend right] node [left] {} node {} (q2);
\draw (l3) edge[bend right] node [left] {} node {} (p3);
\draw (m3) edge[bend right] node [left] {} node {} (n3);
\draw (o3) edge[bend right] node [left] {} node {} (q3);
\draw (l4) edge[bend right] node [left] {} node {} (q4);
\draw (m4) edge[bend right] node [left] {} node {} (o4);
\draw (n4) edge[bend right] node [left] {} node {} (p4);
\draw (l5) edge[bend right] node [left] {} node {} (n5);
\draw (m5) edge[bend right] node [left] {} node {} (q5);
\draw (o5) edge[bend right] node [left] {} node {} (p5);

\draw (r1) edge[bend right] node [left] {} node {} (w1);
\draw (s1) edge[bend right] node [left] {} node {} (t1);
\draw (u1) edge[bend right] node [left] {} node {} (v1);
\draw (r2) edge[bend right] node [left] {} node {} (u2);
\draw (s2) edge[bend right] node [left] {} node {} (v2);
\draw (t2) edge[bend right] node [left] {} node {} (w2);
\draw (r3) edge[bend right] node [left] {} node {} (s3);
\draw (t3) edge[bend right] node [left] {} node {} (v3);
\draw (u3) edge[bend right] node [left] {} node {} (w3);
\draw (r4) edge[bend right] node [left] {} node {} (t4);
\draw (s4) edge[bend right] node [left] {} node {} (u4);
\draw (v4) edge[bend right] node [left] {} node {} (w4);
\draw (r5) edge[bend right] node [left] {} node {} (v5);
\draw (s5) edge[bend right] node [left] {} node {} (w5);
\draw (t5) edge[bend right] node [left] {} node {} (u5);

\end{tikzpicture}

\caption{ Three 1-factorizations of $K_6$, $F$, $F^*$, and $F'$.}
\label{9}
\end{figure}

First suppose $k$ is odd. For $c=c_1,c_2,c_3$,  let $C_1=G_{i_1}[H_k]$ and $C_2=G_{i_2}[H_k]$ be two independent Hamilton cycles on $G_{i_1}$ and $G_{i_2}$. Then $\sigma^{k-1}((u_{i_1},{1}), (u_{i_1},{2})) \in E(C_1)$ and \\
$\sigma^{k-1}((u_{i_2},{\frac{x+3}{2}}), (u_{i_2},{\frac{x+1}{2}})) \in E(C_2)$. Thus,  $C= \sigma^{k-1}((u_{i_1},{1}), (u_{i_1},{2}),(u_{i_2},{\frac{x+3}{2}}), (u_{i_2},{\frac{x+1}{2}}))$ is a cycle of length four in $K_{(x:2)} \cup 2K_x$. However, 

\[e_1 = \sigma^{k-1} \{(u_{i_1},2),(u_{i_2},{\frac{x+3}{2}})\}\] and 

\[e_2 = \sigma^{k-1} \{(u_{i_2},{\frac{x+1}{2}}),(u_{i_1},{1})\}\] 
are not in $E(C_1) \cup E(C_2)$. Then by Lemma \ref{Remark 4.4}, the subgraph of $K_{(x:2)} \cup 2K_x$ whose edge set is $(E(C_1) \cup E(C_2)) \oplus E(C)$ is a single cycle of length $2x$. Thus this 4-cycle switch, $S_1$, joins two independent $x$-cycles into a cycle of length $2x$. Therefore, for each $k$ we obtain a $2x$-cycle factor, $\cal{A}$$_k$.

Now suppose $k$ is even. For $c=c_1,c_2,c_3$, let $C_1=G_{i_1}[H_k]$ and $C_2=G_{i_2}[H_k]$ be two independent Hamilton cycles on $G_{i_1}$ and $G_{i_2}$. Then $\sigma^{k-1}((u_{i_1},{\frac{x+1}{2}}), (u_{i_1},{\frac{x+3}{2}})) \in E(C_1)$ and $\sigma^{k-1}((u_{i_2},{2}), (u_{i_2},{1})) \in E(C_2)$. Thus,  $C= \sigma^{k-1}((u_{i_1}{\frac{x+1}{2}}), (u_{i_1},{\frac{x+3}{2}}), (u_{i_2},{2}), (u_{i_2},{1}))$ is a cycle of length four in $K_{(x:2)} \cup 2K_x$. However, 

\[e_3 = \sigma^{k-1} \{(u_{i_1},{\frac{x+3}{2}}), (u_{i_2},{2})\}\] and 

\[e_4 = \sigma^{k-1} \{(u_{i_2},{1}), (u_{i_1},{\frac{x+1}{2}})\}\] 
are not in $E(C_1) \cup E(C_2)$. So by Lemma \ref{Remark 4.4}, $(E(C_1) \cup E(C_2)) \oplus E(C)$ produces a single cycle of length $2x$.Thus this 4-cycle switch, $S_2$, joins two independent $x$-cycles into a cycle of length $2x$. Therefore, for each $k$ we obtain a $2x$-cycle factor, $\cal{B}$$_k$. 

\

For each $c$, let $d_{i_1}= \frac{x-1}{2}$ and $d_{i_2}= \frac{x-1}{2}$, then $d=x-1$. So by Lemma \ref{Lemma2}, $G_c[\Delta]$ consists of a $2x$-cycle, and $G[\Delta]$ is a $2x$-cycle factor. For $k$ odd, the independent edges $e_1= \sigma^{k-1} ((u_{i_1},1), (u_{i_2}, \frac{x+1}{2}))$ and $e_2= \sigma^{k-1} ((u_{i_1},2), (u_{i_2}, \frac{x+3}{2}))$, which are used in $A_k$, are also in $G_{c}[\Delta]$. Thus the 4-cycle switch, $S_{1}$, that was used to produce $A_{k}$, results in a 4-cycle switch of $G_c[\Delta]$ with $C= \sigma^{k-1} ((u_{i_2},\frac{x+1}{2}), (u_{i_1},1), (u_{i_1},2), (u_{i_2},\frac{x+3}{2}))$. Similarly, when $k$ is even, recall that $e_3= \sigma^{k-1} ((u_{i_1},\frac{x+3}{2}), (u_{i_2},2))$ and $e_4= \sigma^{k-1} ((u_{i_1},\frac{x+1}{2}), (u_{i_2},1))$, which are used in $B_{k}$, are also in $G_c[\Delta]$. However,
$\sigma^{k-1}((u_{i_1},{\frac{x+1}{2}}), (u_{i_1},{\frac{x+3}{2}}))$ and $\sigma^{k-1}((u_{i_2},{2}), (u_{i_2},{1}))$ are not in $G_{c}[\Delta]$. Thus the 4-cycle switch, $S_2$, induces the 4-cycle switch of $G_c[\Delta]$ with $C= \sigma^{k-1} ((u_{i_2},2), (u_{i_1},\frac{x+3}{2}), \\ (u_{i_1},\frac{x+1}{2}), (u_{i_2},1))$. In each case, by Lemma \ref{Lemma C}, this cycle switch results in a different $2x$-cycle. Therefore, after applying all of the cycle switches, the resulting graph is still a $2x$-cycle factor, which we will call $\cal{C}$.

\

Now consider the 6-cycle $(0,1,2,3,4,5)$ and the list of differences $(1,0,1,0,1,0)$, so $d=3$. By Lemma \ref{Lemma2}, $K_{(x:6)}[\Delta]$ is a Hamilton cycle, which we will call $X$. If $x \equiv 1 \pmod{6}$, then $(a,b)=((1,0),(2,0))$, $(c,d)=((3,0),(4,0))$, and $(e,f)=((5,0),(0,0))$ are three independents edges on $X$. By Lemma \ref{Lemma B}, the 6-cycle switch, $S_3$, of $X$ with $C=(a,b,c,d,e,f)$ produces a set of three independent spanning cycles. Furthermore, the paths $P_1, P_2, P_3$ all have the same length, and the three cycles are each of size $2x$. If $x \equiv 5 \pmod{6}$, then  $(a',b')=((1,x-1),(2,x-1))$, $(c',d')=((3,x-1),(4,x-1))$, and $(e',f')=((5,x-1),(0,x-1))$ are three independents edges on $X$. By Lemma \ref{Lemma A}, the 6-cycle switch, $S_4$, of $X$ with $C'=(a',b',e',f',c',d')$ produces a different Hamilton cycle, which we will call $Y$. Now apply the 6-cycle switch, $S_3$, of $Y$ with $C$. By Lemma \ref{Lemma B}, this switch results in a $2x$- cycle factor. Thus in each case the 6-cycle switches produce a $2x$-cycle factor, which we will call $\cal{D}$.

\

For each 2-cycle $c$ corresponding to an edge $i_1,i_2 \in F_1$, let $d_{i_1}=x-1$ and $d_{i_2}=0$, then $d=x-1$. So by Lemma \ref{Lemma2}, $G_c[\Delta]$ consists of a $2x$-cycle, and $G[\Delta]$ is a $2x$-cycle factor, which we will call $X^*$. The 6-cycle switch, $S_3$, induces the 6-cycle switch of $X^*$ with $C=(f,a,b,c,d,e)$. By Lemma \ref{Lemma D}, this results in a single cycle of length $6x$, which we will call $\cal{Z}$. If $x \equiv 5 \pmod{6}$, then the 6-cycle switch, $S_4$, uses edges $(f',a')=((0,x-1),(1,x-1))$, $(b',c')=((2,x-1),(3,x-1))$, and $(d',e')=((4,x-1),(5,x-1))$. Thus we must remove these edges from $\cal{Z}$. The cycle $\cal{Z}$ can be described as $\cal{Z}$ $ = (f',a'), P_1, (d',e'), P_2, (b',c'), P_3$. So we perform the 6-cycle switch of $\cal{Z}$ with $C=(f',a',d',e',b',c')$, $S_5$, which by Lemma \ref{Lemma B} produces a $2x$-cycle factor. If $x \equiv 1 \pmod{6}$, then we also apply $S_5$ to $\cal{Z}$ to obtain a $2x$-cycle factor. Thus in each case the 6-cycle switches produce a $2x$-cycle factor, which we will call $\cal{E}$.

\

Note that we have obtained $(\frac{x-1}{2})$ $2x$-cycle factors from $\cal{A}$$_k \cup \cal{B}$$_k$, alongside the $2x$-cycle factors $\cal{C}$, $\cal{D}$, and $\cal{E}$. Consequently, the total number of $2x$-cycle factors obtained stands at $(\frac{x+5}{2})$. Note that we have covered all inside edges from $K_x$ in the factors $\cal{A}$$_k$ and $\cal{B}$$_k$. The $2x$-cycle factor $\cal{C}$ covers all cross differences $\frac{x-1}{2}$ and $\frac{x+1}{2}$ from $G_{i_1}$ to $G_{i_2}$ for each edge $(i_1,i_2) \in F_1$. Similarly, the $2x$-cycle factor $\cal{D}$ covers cross difference one from $G_{i_1}$ to $G_{i_2}$ for each edge $(i_1,i_2) \in F_1$, while the factor $\cal{E}$ also covers cross difference $x-1$ from $G_{i_1}$ to $G_{i_2}$ for each edge $(i_1,i_2) \in F_1$. Additionally, factors $\cal{D}$ and $\cal{E}$ accounts for some cross differences zero, which will be addressed in subsequent paragraphs of this proof.

\

Our goal now is to cover the remaining cross differences from $G_{i_1}$ to $G_{i_2}$ for each edge $(i_1,i_2) \in F_1$. In particular this list is $L= \{2,3, \ldots, x-2\} \setminus \{\frac{x-1}{2}, \frac{x+1}{2}\}$. If $x \equiv 7, 11 \pmod{12}$, on each edge $(i_1,i_2) \in F_1$, let $d_1=2$ and $d_2=2$, so $d=4$. Then, by Lemma \ref{Lemma2}, $G[\Delta]$ consists of a $2x$-cycle factor. The cross difference $d_2$ from $G_{i_2}$ to $G_{i_1}$ is effectively the cross difference $x-2$ from $G_{i_1}$ to $G_{i_2}$. Thus we produce a $2x$-cycle factor that covers cross differences 2 and $x-2$ on each edge $(i_1,i_2) \in F_1$. Similarly, let $d_{1} = 3,5,7, \ldots, \frac{x-5}{2}, \frac{x+3}{2}, \frac{x+7}{2}, \ldots, x-4$, and $d_2=x-1-d_1$. In each case, $d=x-1$ so by Lemma \ref{Lemma2}, $G[\Delta]$ produces a $2x$-cycle factor. The cross difference $d_2$ from $G_{i_2}$ to $G_{i_1}$ is effectively the cross difference $d_1 +1$ from $G_{i_1}$ to $G_{i_2}$. Thus we have covered all cross differences in $L$. If $x \equiv 1, 5 \pmod{12}$, on each edge $(i_1,i_2) \in F_1$, let $d_{1} = 2,4,6, \ldots, \frac{x-5}{2}, \frac{x+3}{2}, \frac{x+7}{2}, \ldots,x-3$, and $d_2=x-1-d_1$. Again, in each case we obtain $2x$-cycle factors, and have covered all cross differences in $L$. Let $\cal{F}$$_l$ denote the $2x$-cycle factors so obtained from the $l= \frac{x-5}{2}$ items on $L$.

\

We now turn to $F_2$, $F_3$, $F_4$, and $F_5$. We have yet to cover any cross differences from the set $\{1,2,\ldots, x-1\}$ between groups $G_{i_1}$ and $G_{i_2}$ for any edge $(i_1,i_2) \in F_j$, $j \in \{2,\ldots,5\}$. So for each such edge, let $d_1=\{1,3,5, \ldots, x-2\}$ and  $d_2=x-1-d_1$. For each pair, $\{d_1,d_2\}$, $G[\Delta]$ produces a $2x$-cycle factor by Lemma \ref{Lemma2}. Thus we construct a set of $m=4(\frac{x-1}{2})$ factors in this way, which we will denote as $\cal{G}$$_m$ . Consequently, a total of $(\frac{x+5}{2}) + (\frac{x-5}{2}) + 4 (\frac{x-1}{2})= (3x-2)$ $2x$-cycle factors have been produced.

\

At this stage, we have addressed and covered all edges with cross differences from the set $\{1,2, \ldots, x-1\}$ from $G_{i_1}$ to $G_{i_2}$ for each edge $(i_1,i_2) \in F_1, F_2, F_3, F_4, F_5$. Thus we now address the cross difference zero. For a given $t$, let $K^t$ denote the subgraph of $K_{(x:6)}$ which consists of all edges with cross difference zero on vertices $(i,t)$ for $i= 0, \ldots, 5$. Then to cover all edges with cross difference zero we must decompose $K^t$ for each $t \in \{0,1\ldots, x-1\}$. We will refer to the set of vertices in $K^t$ as vertices in {\it level} $t$. 

\

If $x \equiv 1 \pmod{6}$, recall that in $\cal{D}$ we covered cross difference zero on the edges $\{(i,0),(i+1,0)\}$ for $i=0,2,4$ and $\{(i,t),(i+1,t)\}$ for $i=1,3,5$, and $t=1,2, \ldots, x-2$. In $\cal{E}$ we covered cross difference zero on $\{(i,0),(i+1,0)\}$ for $i=1,3,5$ and $\{(i,t),(i+1,t)\}$ for $i=0,2,4$, and $t=1,2, \ldots, x-2$. Thus we have covered cross difference zero from $F_1 \cup F_2$ on levels $t=0,1,2, \ldots, x-2$. Additionally, in $\cal{D}$ we covered cross difference zero on the edges $\{(i,t),(i+1,t)\}$ for $i=1,3,5$ and $t=x-1$, while in $\cal{E}$ we covered cross difference zero on $\{(i,t),(i+3,t)\}$ for $i=0,1,2$ and $t=x-1$. Thus we have covered cross difference zero from $F_1' \cup F_2'$ on level $t=x-1$. Now, place a copy of $F_3 \cup F_4$ on levels $t=0,1,2, \ldots, x-2$, and a copy of $F_3' \cup F_4'$ on level $t=x-1$. This produces a 6-cycle factor. Finally, place a copy of $F_5$ on levels $t=0,1,2, \ldots, x-2$, and a copy of $F_5'$ on level $t=x-1$, to obtain a 1-factor. 

\

If $x \equiv 5 \pmod{6}$, recall that in $\cal{D}$ we covered cross difference zero on the edges $\{(i,0),(i+1,0)\}$ for $i=0,2,4$ and $\{(i,t),(i+1,t)\}$ for $i=1,3,5$, and $t=1,2, \ldots, x-2$. In $\cal{E}$ we covered cross difference zero on $\{(i,0),(i+1,0)\}$ for $i=1,3,5$ and $\{(i,t),(i+1,t)\}$ for $i=0,2,4$, and $t=1,2, \ldots, x-2$. Thus we have covered cross difference zero from $F_1 \cup F_2$ on levels $t=0,1,2, \ldots, x-2$. Additionally, in $\cal{D}$ we covered cross difference zero on the edges $\{(i,t),(i+1,t)\}$ for $i=0,2,4$ and $t=x-1$, while in $\cal{E}$ we covered cross difference zero on $\{(i,t),(i+3,t)\}$ for $i=0,1,2$ and $t=x-1$. Thus we have covered cross difference zero from $F_1' \cup F_2'$ on level $t=x-1$. Now, place a copy of $F_3 \cup F_4$ on levels $t=0,1,2, \ldots, x-2$, and a copy of $F_3^* \cup F_4^*$ on level $t=x-1$, which produces a 6-cycle factor. Finally, place a copy of $F_5$ on levels $t=0,1,2, \ldots, x-2$, and a copy of $F_5^*$ on level $t=x-1$, to obtain the 1-factor. Thus in each case we have produced a 6-cycle factor, $\cal{H}$, and a 1-factor, $\cal{I}$. Furthermore, the edges with cross differences zero from $\cal{D} \cup \cal{E} \cup \cal{H} \cup \cal{I}$ decompose $K^t$.

\

Finally, $\cal{A}$$_k$ $\cup \cal{B}$$_k$  $\cup \cal{C} \cup \cal{D} \cup \cal{E} \cup \cal{F}$$_l$ $\cup \cal{G}$$_m$ $\cup \cal{H} \cup \cal{I}$ provides the desired solution to ${\hbox{HWP}}(6x;6,2x;1, \\ 3x-2)$.

\end{proof}

\subsection{$\gcd(6,N)=3$}

In the remainder of this paper, we focus on the case when $\gcd(6,N)=3$, we will prove there is a solution to $\hbox{HWP}(6xt;6,3x;1,3xt-2)$ for all odd $x \geq 3$.

\

\begin{lemma}
\label{18;6,9;1,7}
There exists a solution to ${\hbox{HWP}}(18;6,9;1,7)$.

\end{lemma}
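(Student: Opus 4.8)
The plan is to realize the required decomposition inside $K_{18}$ directly, by a difference argument in $\mathbb{Z}_{18}$. Since $v=18$ is even, I first note the necessary conditions hold ($\alpha+\beta=1+7=8=\frac{v-2}{2}$, and $6\mid 18$, $9\mid 18$), so what must be produced is a decomposition of $K_{18}-F$ into a single $C_6$-factor and seven $C_9$-factors, where $F$ is the removed $1$-factor. I would take the vertex set to be $\mathbb{Z}_{18}$ and classify edges by their difference $d\in\{1,2,\ldots,9\}$. By Corollary~\ref{Stern-Lenz2}, for each $d$ the graph $G(\{d\},18)$ splits into $\gcd(d,18)$ components, each a cycle of length $18/\gcd(d,18)$ (a $1$-factor when $d=9$); these nine difference graphs partition $E(K_{18})$.

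Next I would assign the differences that already yield factors of the correct type. The difference $d=9$ gives $G(\{9\},18)$, a $1$-factor, which I take to be $F$. The difference $d=3$ has $\gcd(3,18)=3$, so $G(\{3\},18)$ is three $6$-cycles, i.e. exactly the single $C_6$-factor required. Each of $d\in\{2,4,8\}$ has $\gcd(d,18)=2$, so each of $G(\{2\},18)$, $G(\{4\},18)$, $G(\{8\},18)$ is a pair of $9$-cycles, supplying three of the seven needed $C_9$-factors.

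The heart of the argument is then to recombine the four leftover difference graphs, namely $G(\{1,5,6,7\},18)$, into the remaining four $C_9$-factors. Here the key structural fact, and the main obstacle, is a parity constraint: writing $\mathbb{Z}_{18}$ as its even and odd classes, the odd differences $1,5,7$ join the two classes while $d=6$ (which forms six triangles) stays within a class. Since a $9$-cycle has odd length, every $9$-cycle in $G(\{1,5,6,7\},18)$ must use an \emph{even} number of cross edges and hence an \emph{odd} (nonzero) number of difference-$6$ edges; in particular no single leftover difference can be used on its own. I must therefore thread the eighteen difference-$6$ edges through the eight $9$-cycles (two per factor) so that each cycle closes up and every one of the $72$ edges is used exactly once. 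I would resolve this by exhibiting the four factors explicitly: for instance the cycle $(0,7,14,3,10,9,4,17,12)$ uses differences $7,7,7,7,1,5,5,5,6$ and is a legitimate $9$-cycle of the required mixed type, and I would pair it with a companion $9$-cycle on the remaining nine vertices for each factor, then verify directly that the four parallel classes partition $G(\{1,5,6,7\},18)$.

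The only genuine difficulty is this last bookkeeping step; once the explicit four $C_9$-factors are displayed, combining them with $G(\{3\},18)$ and $G(\{2\},18),G(\{4\},18),G(\{8\},18)$ and deleting $F=G(\{9\},18)$ immediately yields the claimed ${\hbox{HWP}}(18;6,9;1,7)$. I expect this lemma to serve as a base case for the general $\gcd(6,N)=3$ family $\hbox{HWP}(6xt;6,3x;1,3xt-2)$, where the small explicit solutions are afterwards inflated by the weighting construction of Lemma~\ref{16a}; but for the statement at hand a single hand-built decomposition of $K_{18}$ suffices.
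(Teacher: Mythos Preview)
Your approach via differences in $\mathbb{Z}_{18}$ is genuinely different from the paper's, which instead partitions the $18$ points into three groups of size~$6$ and works inside $K_{(6:3)}\cup 3K_6$, listing all nine factors explicitly in a way that mirrors the cross-difference machinery used throughout the rest of the paper. Your method is more self-contained for this single lemma and cleanly dispatches the $C_6$-factor, the $1$-factor, and three of the seven $C_9$-factors with no work at all; the paper's construction, by contrast, is bespoke but consistent with its global framework and feeds naturally into the proof of the next lemma for general odd~$x$.

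That said, your proposal has a real gap at the crucial step: you reduce everything to showing that $G(\{1,5,6,7\},18)$ admits a $C_9$-factorization into four parallel classes, but you do not actually produce one. The single $9$-cycle $(0,7,14,3,10,9,4,17,12)$ is correct (and does extend to a full factor: its companion on the remaining nine vertices can be taken as $(6,5,11,16,15,2,1,8,13)$), and your parity observation that each $9$-cycle must use an odd number of difference-$6$ edges is a genuine and useful constraint. But neither a single cycle nor the parity count establishes that four edge-disjoint $C_9$-factors exist on this $8$-regular circulant; no theorem you cite guarantees it. The lemma is not proved until the eight $9$-cycles are written down and verified to partition all $72$ edges. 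This ``bookkeeping step'' that you flag as the only difficulty is in fact the entire content of the proof, and the paper handles its analogue in the only way available: by exhibiting the factors explicitly.
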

\begin{proof} 

Let $G_0,G_1,G_2$ be the three parts of $K_{(6:3)}$ with $G_i$ having vertex set $\{(i,j): 0 \leq j \leq 5\} $.

\

Partition the set of points into 3 groups $G_0, G_1$ and $G_2$ of size 6, then the factors are given by:

\

${F_1}= \{(0,0), (1,1), (2,2), (2,0), (0,1), (1,2), (1,3), (2,4), (0,5), (0,0)\} \cup $ 

$\{(0,2), (2,1), (1,0), (1,5), (0,4), (2,3), (2,5), (1,4), (0,3), (0,2)\}$

\

${F_2}= \{(0,0), (2,5), (2,4), (2,0), (1,5), (1,4), (1,3), (0,2), (0,1), (0,0)\} \cup$ 

$\{(1,0), (1,1), (1,2), (2,3), (2,1), (2,2), (0,3), (0,4), (0,5), (1,0)\}$

\

${F_3}= \{(0,0), (0,4), (1,4), (2,4), (2,2), (0,2), (1,2), (1,0), (2,0), (0,0)\} \cup$ 

$\{(0,1), (0,3), (2,3), (1,3), (1,5), (0,5), (2,5), (2,1), (1,1), (0,1)\}$

\

${F_4}= \{(0,0), (1,0), (1,4), (1,2), (2,2), (2,3), (2,4), (0,4), (0,2), (0,0)\} \cup$ 

$\{(0,1), (2,1), (2,0), (2,5), (1,5), (1,1), (1,3), (0,3), (0,5), (0,1)\}$ 

\

${F_5}= \{(0,0), (1,2), (2,5), (0,4), (1,0), (2,3), (0,2), (1,4), (2,1), (0,0)\} \cup$ 

$\{(0,1), (1,3), (2,0), (0,5), (1,1), (2,4), (0,3), (1,5), (2,2), (0,1)\}$

\

${F_6}= \{(0,0), (1,3), (2,2), (0,4), (1,1), (2,0), (0,2), (1,5), (2,4), (0,0)\} \cup$ 

$\{(0,1), (1,4), (2,3), (0,5), (1,2), (2,1), (0,3), (1,0), (2,5), (0,1)\}$

\

${F_7}= \{(0,0), (1,5), (2,1), (0,4), (1,3), (2,5), (0,2), (1,1), (2,3), (0,0)\} \cup$ 

$\{(0,1), (1,0), (2,2), (0,5), (1,4), (2,0), (0,3), (1,2), (2,4), (0,1)\}$

\

${F_8}= \{(0,0), (1,4), (2,2), (2,5), (1,1), (0,3), (0,0)\} \cup$ 

$\{(0,1), (1,5), (2,3), (2,0), (1,2), (0,4), (0,1)\} \cup$ 

$\{(0,2), (1,0), (2,4), (2,1), (1,3), (0,5), (0,2)\}$

\

${F_9}= \{(0,0), (2,2)\} \cup$ $\{(0,1), (2,3)\} \cup$ $\{(0,2), (2,4)\} \cup$ $\{(0,3), (2,5)\} \cup$ 

$\{(0,4), (2,0)\} \cup$ $\{(0,5), (2,1)\} \cup$ $\{(1,0), (1,3)\} \cup$ $\{(1,1), (1,4)\} \cup$ $\{(1,2), (1,5)\}.$

\

Note that $F_1$ to $F_7$ are the seven 9-cycle factors of $K_{18}$, $F_8$ is the  6-cycle factor and $F_9$ is the 1-factor. Factors $F_1$ and $F_2$ are generated by the combination of 6-cycles of $K_6$ and $d_k=1$ for $k \in \{0,1,2\}$. Factors $F_3$ and $F_4$ are the result of combining triangles of $K_6$ and $d_k=0$ for $k \in \{0,1,2\}$. Finally, factors $F_5$ to $F_7$ arise from taking $d_k=2$ for $k \in \{0,1,2\}$, $d_k=3$ for $k \in \{0,1,2\}$, and $d_k=5$ for $k \in \{0,1,2\}$ respectively, then, by Lemma \ref{Lemma2}, $G[\Delta]$ consists of a $9$-cycle.

\end{proof}

\

\begin{lemma} 
\label{$gcd(3,N), t=1$}
There exists a solution to ${\hbox{HWP}}(6x;6,3x;1,3x-2)$ for all odd $x \geq 3$.

\end{lemma}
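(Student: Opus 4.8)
The plan is to view $K_{6x}$ as $K_{(6:x)}\cup xK_6$: there are $x$ parts $G_0,\dots,G_{x-1}$ of size $6$, each carrying a copy of $K_6$ on $\mathbb Z_6$, together with the complete equipartite graph joining the parts. The $3x$-cycles will be built from cross edges and inside edges exactly as in the base case $x=3$ of Lemma~\ref{18;6,9;1,7}. First I would assign the inside edges of each $K_6$: the six difference-$1$ edges form a single $6$-cycle in each part, and the union of these $x$ six-cycles is taken to be the unique $C_6$-factor, while the three difference-$3$ edges form a perfect matching in each part, whose union is the removed $1$-factor $F$. This leaves, inside each part, only the two difference-$2$ triangles, which together with all cross edges must be packed into the $(3x-2)$ $C_{3x}$-factors; an edge count confirms that these are exactly the right number of edges.

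For the cross edges I would decompose the part-index graph $K_x$ into its $\frac{x-1}{2}$ Hamilton cycles (Walecki, $x$ odd), and for each Hamilton cycle $H$ build a $6\times x$ modified row-sum matrix over $\mathbb Z_6$ whose columns are indexed by the edges of $H$ and are permutations of $\mathbb Z_6$, so that every cross difference on every edge of $H$ is covered exactly once. By Lemma~\ref{Lemma2}, any row summing to $d$ with $\gcd(d,6)=2$, that is $d\equiv 2$ or $4\pmod 6$, yields a subgraph $G[\Delta]$ that is precisely a $C_{3x}$-factor (two $3x$-cycles). The goal is therefore to make as many rows as possible sum to $2$ or $4\pmod 6$.

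A parity argument reveals the central obstruction. Since each column is a permutation of $\mathbb Z_6$, the six row-sums of a single matrix add to $15x$, which is odd for odd $x$; as any residue $2$ or $4\pmod 6$ is even, the six sums cannot all be even, so each Hamilton cycle is forced to supply at least one defective row. I would arrange exactly one defective row per matrix, of sum $\equiv 3\pmod 6$, which by Lemma~\ref{Lemma2} splits into three $2x$-cycles rather than a $C_{3x}$-factor. The remaining five rows give $\frac{5(x-1)}{2}$ genuine $C_{3x}$-factors outright. The heart of the argument is then to recombine the $\frac{x-1}{2}$ families of defective $2x$-cycles with the reserved inside difference-$2$ triangles into the remaining $\frac{x+1}{2}$ $C_{3x}$-factors, using symmetric-difference splicing in the spirit of Lemmas~\ref{mix} and~\ref{row0s}. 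A tally then gives $\frac{5(x-1)}{2}+\frac{x+1}{2}=3x-2$ cycle factors of length $3x$, together with the single $C_6$-factor and the $1$-factor.

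The step I expect to be hardest is the explicit construction of these modified row-sum matrices together with the splicing: one must simultaneously realize the prescribed row-sums with permutation columns and guarantee that the resulting defective $2x$-cycles mesh with the inside triangles to close up into $3x$-cycles. As in the $\gcd(M,N)=2$ development (Lemmas~\ref{Tables1a} and~\ref{Tables1b}), I anticipate this will require splitting into cases according to the residue of $x$ modulo a small integer, with an explicit algorithm for the table entries; the base case $x=3$ is furnished by Lemma~\ref{18;6,9;1,7}, and any small values of $x$ falling outside the generic construction would be checked directly.
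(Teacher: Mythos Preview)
Your plan diverges substantially from the paper's proof and leaves its decisive step unresolved. You partition $K_{6x}$ as $K_{(6:x)}\cup xK_6$ and try to reuse the $\gcd(6,N)=2$ machinery: Hamilton--decompose $K_x$ and, over each Hamilton cycle, build a $6\times x$ MRSM over $\mathbb Z_6$ whose rows with sum $\equiv 2,4\pmod 6$ yield $C_{3x}$-factors via Lemma~\ref{Lemma2}. Your parity computation is correct and is exactly the obstruction to this route: the six row sums total $15x$, which is odd, so every matrix must contain at least one row of odd sum that cannot produce a $C_{3x}$-factor. Your proposed remedy is to force one defective row per matrix to have sum $\equiv 3\pmod 6$ (three $2x$-cycles by Lemma~\ref{Lemma2}) and then to ``splice'' the resulting $\frac{3(x-1)}{2}$ $2x$-cycles with the $2x$ inside difference-$2$ triangles into the missing $\frac{x+1}{2}$ $C_{3x}$-factors. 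You give no construction for this, and none of the tools you invoke apply: Lemmas~\ref{mix} and~\ref{row0s} manufacture $2x$-cycles, not $3x$-cycles, and the $4$- and $6$-cycle switches of Lemmas~\ref{Remark 4.4}--\ref{Lemma D} do not turn a $2x$-cycle and a triangle into a $3x$-cycle. Reassembling $\frac{3(x-1)}{2}$ $2x$-cycles and $2x$ triangles into $x+1$ vertex-disjoint $3x$-cycles that organize into $\frac{x+1}{2}$ spanning $2$-factors is a genuine packing problem, not a bookkeeping detail, and your proposal gives no evidence it can be done uniformly in odd $x$.

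The paper avoids this obstruction altogether by choosing the \emph{other} natural partition: it views $K_{6x}$ as $K_{(3:2x)}\cup 2xK_3$, with $2x$ parts of size~$3$. One first takes a $C_x$-factorization of $K_{2x}-F$ (Theorem~\ref{fac}) and then gives every vertex weight~$3$. On a single $C_x$-factor four $C_{3x}$-factors are constructed explicitly: two of them, called $H_1$ and $H_2$, absorb all three edges of every inside $K_3$, and two more come from cross-difference sequences in $\mathbb Z_3$ via Lemma~\ref{Lemma2}. Each of the remaining $x-2$ $C_x$-factors contributes three further $C_{3x}$-factors from cross differences alone, giving $4+3(x-2)=3x-2$ in total. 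The leftover $1$-factor $F$ of $K_{2x}$, blown up to disjoint $K_{3,3}$'s, then supplies both the $C_6$-factor (differences $(1,1)$ on each $2$-cycle) and the removed $1$-factor (difference~$0$). With parts of size~$3$ there is no parity barrier, the inside graphs are tiny, and no splicing step is needed.
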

\begin{proof} 

When $x=3$, we know that there exists a ${\hbox{HWP}}(18;6,9;1,7)$ by Lemma \ref{18;6,9;1,7}.

Now suppose $x > 3$. Then by Theorem \ref{fac}, $K_{2x}$ can be decomposed into $(x-1)$ $C_x$-factors.

Give each point weight 3, so $K_{6x}$ can be considered as $K_{(3:2x)} \cup 2xK_3$. Then on one $C_x$ factor, construct four $C_{3x}$-factors of $K_{6x}$ as follows.

\begin{enumerate}

    \item Place $H_1$ on each $C_{(3:x)}$, where $H_1$ is given in Figure \ref{H_1}.
    \begin{figure}[h!]
\centering
\definecolor{mygray}{RGB}{70,80,70}

\begin{tikzpicture}[thick,
  every node/.style={circle, inner sep=0.1cm},
  fsnode/.style={fill=mygray},
  gsnode/.style={fill=mygray},
  hsnode/.style={fill=mygray},
  isnode/.style={fill=mygray},
  jsnode/.style={fill=mygray}
]

% the vertices of f1
\begin{scope}[start chain=going below,node distance=7mm]
\foreach \i in {1,2,...,3}
  \node[fsnode,on chain] (f\i) {};
\end{scope}

% the vertices of f2
\begin{scope}[xshift=1cm,yshift=0cm,start chain=going below,node distance=7mm]
\foreach \i in {1,2,...,3}
  \node[gsnode,on chain] (g\i) {};
\end{scope}

% the vertices of f3
\begin{scope}[xshift=2cm,yshift=0cm,start chain=going below,node distance=7mm]
\foreach \i in {1,2,...,3}
  \node[hsnode,on chain] (h\i) {};
\end{scope}

% the vertices of f4
\begin{scope}[xshift=3cm,yshift=0cm,start chain=going below,node distance=7mm]
\foreach \i in {1,2,...,3}
  \node[isnode,on chain] (i\i) {};
\end{scope}

% the vertices of f5
\begin{scope}[xshift=6cm,yshift=0cm,start chain=going below,node distance=7mm]
\foreach \i in {1,2,...,3}
  \node[isnode,on chain] (j\i) {};
\end{scope}

\node[right=of i1] {$\cdots$};
\node[right=of i2] {$\cdots$};
\node[right=of i3] {$\cdots$};

% the set f1
\node [fit=(f1) (f3)] {};
% the set f2
\node [fit=(g1) (g3)] {};
% the set f3
\node [fit=(h1) (h3),label=above:$H_1$] {};
% the set f4
\node [fit=(i1) (i3)] {};
% the set f5
\node [fit=(j1) (j3)] {};

% the edges
\draw (f1) edge[bend right] node [left] {}(f2);
\draw (f2) edge[bend right] node [left] {}(f3);
\draw (g1) edge[bend right] node [left] {}(g2);
\draw (g2) edge[bend right] node [left] {}(g3);
\draw (h1) edge[bend right] node [left] {}(h2);
\draw (h2) edge[bend right] node [left] {}(h3);
\draw (i1) edge[bend right] node [left] {}(i2);
\draw (i2) edge[bend right] node [left] {}(i3);
\draw (j1) edge[bend right] node [left] {}(j2);
\draw (j2) edge[bend right] node [left] {}(j3);
\draw (f3) -- (g1);
\draw (g3) -- (h1);
\draw (h3) -- (i1);
\draw (i3) -- (j1);
\draw (j3) edge[bend right] node [left] {}(f1);
\end{tikzpicture}
\caption{A $3x$-cycle from ${\hbox{HWP}}(6x;6,3x;1,3x-2)$ obtained from a $C_x$ of $K_{2x}$.}
\label{H_1}
\end{figure}
    \item Place $H_2$ on each $C_{(3:x)}$, where $H_2$ is given in Figure \ref{H_2}.
    \begin{figure}[h!]
\centering
\definecolor{mygray}{RGB}{70,80,70}

\begin{tikzpicture}[thick,
  every node/.style={circle, inner sep=0.1cm},
  fsnode/.style={fill=mygray},
  gsnode/.style={fill=mygray},
  hsnode/.style={fill=mygray},
  isnode/.style={fill=mygray},
  jsnode/.style={fill=mygray}
]

% the vertices of f1
\begin{scope}[start chain=going below,node distance=7mm]
\foreach \i in {1,2,...,3}
  \node[fsnode,on chain] (f\i) {};
\end{scope}

% the vertices of f2
\begin{scope}[xshift=1cm,yshift=0cm,start chain=going below,node distance=7mm]
\foreach \i in {1,2,...,3}
  \node[gsnode,on chain] (g\i) {};
\end{scope}

% the vertices of f3
\begin{scope}[xshift=2cm,yshift=0cm,start chain=going below,node distance=7mm]
\foreach \i in {1,2,...,3}
  \node[hsnode,on chain] (h\i) {};
\end{scope}

% the vertices of f4
\begin{scope}[xshift=3cm,yshift=0cm,start chain=going below,node distance=7mm]
\foreach \i in {1,2,...,3}
  \node[isnode,on chain] (i\i) {};
\end{scope}

% the vertices of f5
\begin{scope}[xshift=6cm,yshift=0cm,start chain=going below,node distance=7mm]
\foreach \i in {1,2,...,3}
  \node[isnode,on chain] (j\i) {};
\end{scope}

\node[right=of i1] {$\cdots$};
\node[right=of i2] {$\cdots$};
\node[right=of i3] {$\cdots$};

% the set f1
\node [fit=(f1) (f3)] {};
% the set f2
\node [fit=(g1) (g3)] {};
% the set f3
\node [fit=(h1) (h3),label=above:$H_2$] {};
% the set f4
\node [fit=(i1) (i3)] {};
% the set f5
\node [fit=(j1) (j3)] {};

% the edges
\draw (f1) edge[bend right] node [left] {}(f3);
\draw (g1) edge[bend right] node [left] {}(g3);
\draw (h1) edge[bend right] node [left] {}(h3);
\draw (i1) edge[bend right] node [left] {}(i3);
\draw (j1) edge[bend right] node [left] {}(j3);
\draw (f1) -- (g2);
\draw (f2) -- (g3);
\draw (g1) -- (h2);
\draw (g2) -- (h3);
\draw (h1) -- (i2);
\draw (h2) -- (i3);
\draw (i1) -- (j2);
\draw (i2) -- (j3);
\draw (j1) -- (f2);
\draw (j2) -- (f3);
\end{tikzpicture}
\caption{A $3x$-cycle from ${\hbox{HWP}}(6x;6,3x;1,3x-2)$ obtained from a $C_x$ of $K_{2x}$.}
\label{H_2}
\end{figure}

    \item \begin{enumerate}
    \item If $x \equiv 0 \ \hbox{or} \ 2 \pmod{3}$, then for each cycle $c=(i_0, i_1, \ldots, i_{x-1})$ in the $C_x$-factor, let $d_k=0$ for $k = 0,1, \ldots, x-2$ and let $d_{x-1}=2$. Then by Lemma \ref{Lemma2}, $G[\Delta]$ consists of a $3x$-cycle on $C_{(3:x)}$.
    \item If $x \equiv 1 \pmod{3}$, then for each cycle $c=(i_0, i_1, \ldots, i_{x-1})$ in the $C_x$-factor, let $d_k=0$ for $k = 0,1, \ldots, x-3$ and let $d_{x-2}=2$ and $d_{x-1}=2$. Then by Lemma \ref{Lemma2}, $G[\Delta]$ consists of a $3x$-cycle on $C_{(3:x)}$.
    \end{enumerate}
    \item \begin{enumerate}
    \item If $x \equiv 0 \ \hbox{or} \ 2 \pmod{3}$, then for each cycle $c=(i_0, i_1, \ldots, i_{x-1})$ in the $C_x$-factor, let $d_k=2$ for $k = 0,1, \ldots, x-2$ and let $d_{x-1}=0$. Then by Lemma \ref{Lemma2}, $G[\Delta]$ consists of a $3x$-cycle on $C_{(3:x)}$.
    \item If $x \equiv 1 \pmod{3}$, then for each cycle $c=(i_0, i_1, \ldots, i_{x-1})$ in the $C_x$-factor, let $d_k=2$ for $k = 0,1, \ldots, x-3$ and let $d_{x-2}=0$ and $d_{x-1}=0$. Then by Lemma \ref{Lemma2}, $G[\Delta]$ consists of a $3x$-cycle on $C_{(3:x)}$.
    \end{enumerate}
\end{enumerate}

On each of the remaining $(x-2)$ $C_x$ factors we will consider the following cases:

\begin{itemize}
    \item $x \equiv 0 \ \hbox{or} \ 2 \pmod{3}$.
    \begin{enumerate}
        \item For each cycle $c=(i_0, i_1, \ldots, i_{x-1})$ in the $C_x$-factor, let $d_k=0$ for $k = 0,1, \ldots, x-2$ and let $d_{x-1}=1$. Then by Lemma \ref{Lemma2}, $G[\Delta]$ consists of a $3x$-cycle on $C_{(3:x)}$.
        \item For each cycle $c=(i_0, i_1, \ldots, i_{x-1})$ in the $C_x$-factor, let $d_k=1$ for $k = 0,1, \ldots, x-2$ and let $d_{x-1}=0$. Then by Lemma \ref{Lemma2}, $G[\Delta]$ consists of a $3x$-cycle on $C_{(3:x)}$.
        \item For each cycle $c=(i_0, i_1, \ldots, i_{x-1})$ in the $C_x$-factor, let $d_k=2$ for $k = 0,1, \ldots, x-1$. Then by Lemma \ref{Lemma2}, $G[\Delta]$ consists of a $3x$-cycle on $C_{(3:x)}$.
   \end{enumerate}
    \item $x \equiv 1 \pmod{3}$.

    \begin{enumerate}
        \item For each cycle $c=(i_0, i_1, \ldots, i_{x-1})$ in the $C_x$-factor, let $d_k=0$ for $k = 0,1, \ldots, x-4$ and let $d_{x-3}=2$, $d_{x-2}=1$, and $d_{x-1}=1$. Then by Lemma \ref{Lemma2}, $G[\Delta]$ consists of a $3x$-cycle on $C_{(3:x)}$.
        \item For each cycle $c=(i_0, i_1, \ldots, i_{x-1})$ in the $C_x$-factor, let $d_k=1$ for $k = 0,1, \ldots, x-4$ and let $d_{x-3}=1$, $d_{x-2}=2$, and $d_{x-1}=0$. Then by Lemma \ref{Lemma2}, $G[\Delta]$ consists of a $3x$-cycle on $C_{(3:x)}$.
        \item For each cycle $c=(i_0, i_1, \ldots, i_{x-1})$ in the $C_x$-factor, let $d_k=2$ for $k = 0,1, \ldots, x-4$ and let $d_{x-3}=0$, $d_{x-2}=0$, and $d_{x-1}=2$. Then by Lemma \ref{Lemma2}, $G[\Delta]$ consists of a $3x$-cycle on $C_{(3:x)}$.
   \end{enumerate}
        
\end{itemize}

Thus from the construction above, we have obtained a total of $(3x-2)$- $3x$ cycle factors. Now on the 1-factor of $K_{2x}$, we let $d_0=1$ and $d_1=1$. Then $d=2$, so by Lemma \ref{Lemma2}, $G[\Delta]$ consists of a 6-cycle factor. Also on the 1-factor of $K_{2x}$, let $d_0=0$ to obtain a 1-factor.

\end{proof}

\

\begin{theorem} 
\label{gcd=3}
There exists a ${\hbox{HWP}}(6xt;6,3x;1,3xt-2)$ for all odd $x > 3$.
\end{theorem}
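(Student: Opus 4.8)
The plan is to obtain the general-$t$ statement by extending, essentially verbatim, the weight-$3$ construction used to prove the single-copy case in Lemma~\ref{$gcd(3,N), t=1$}. The point is that in that argument nothing depends on $t$ except the size of the complete graph that is first split into $C_x$-factors, so one only needs to replace $K_{2x}$ by $K_{2xt}$ and verify that the same factorization is available and that the same per-cycle gadgets still close up. I would deliberately avoid the route used for $\gcd(6,N)=2$ in Theorem~\ref{1-gcd=2} (take a resolvable $C_x$-factorization of $K_{(x:t)}$ and weight it up), because here $x$ is odd and Theorem~\ref{equip} requires $h(u-1)=x(t-1)$ to be even; this fails for every even $t$, so $K_{(x:t)}$ has no resolvable $C_x$-factorization in half of the cases. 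Working directly inside $K_{2xt}$ sidesteps this parity obstruction entirely.

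For every $t\ge 1$ I would assign weight $3$ to each vertex, writing $K_{6xt}=K_{(3:2xt)}\cup 2xt\,K_3$; the case $t=1$ then reproduces Lemma~\ref{$gcd(3,N), t=1$}. Since $x>3$, the forbidden graphs $K_6-F$ and $K_{12}-F$ of Theorem~\ref{fac} never occur, so $K_{2xt}$ decomposes into a $1$-factor together with $xt-1$ $C_x$-factors. I would then apply, cycle by cycle, the four constructions from the proof of Lemma~\ref{$gcd(3,N), t=1$} to one distinguished $C_x$-factor: the graphs $H_1$ and $H_2$ of Figures~\ref{H_1} and~\ref{H_2} traverse every weighted vertex and between them use all three edges of each copy of $K_3$, so this single factor yields four $C_{3x}$-factors while exhausting the inside-triangle edges. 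To each of the remaining $xt-2$ $C_x$-factors I would apply the three cross-difference gadgets (in the version dictated by $x \bmod 3$), each of which is a $C_{3x}$-factor by Lemma~\ref{Lemma2} and which together cover cross-differences $0,1,2$ on every transition. Finally, treating each of the $xt$ edges of the $1$-factor as a $2$-cycle, the choice $d_0=d_1=1$ gives (again by Lemma~\ref{Lemma2}) a $C_6$-factor and $d_0=0$ gives a $1$-factor. Counting yields $4+3(xt-2)=3xt-2$ $C_{3x}$-factors, one $C_6$-factor, and one $1$-factor, which is exactly $\hbox{HWP}(6xt;6,3x;1,3xt-2)$.

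The step I expect to require the most care is the global edge-accounting, that is, checking that each construction is genuinely independent of $t$. Each $C_x$-factor of $K_{2xt}$ now consists of $2t$ disjoint $x$-cycles rather than the two present when $t=1$, but since every gadget ($H_1$, $H_2$, and the cross-difference rows) is defined one cycle at a time, applying it simultaneously to all $2t$ cycles changes only the number of components in each resulting factor, not its cycle type. The genuinely new points to confirm are therefore only that the distinguished $C_x$-factor, being a spanning $2$-regular subgraph of $K_{2xt}$, meets every one of the $2xt$ groups, so that $H_1\cup H_2$ really does exhaust $2xt\,K_3$; and that, since every edge of $K_{2xt}$ lies in exactly one factor of the decomposition, the cross-differences supplied by the factor containing it account for the nine cross-edges of the corresponding $K_{3,3}$ exactly once (the delicate sub-case being the distinguished factor, where $H_1$ and $H_2$ consume individual cross-edges rather than whole difference classes, and items $3$ and $4$ must supply precisely the complement). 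Granting these bookkeeping checks, the cycle-length guarantees come for free from Lemma~\ref{Lemma2}, and the proof is complete.
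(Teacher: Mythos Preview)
Your proposal is correct and, for $t\ge 3$, takes a genuinely different route from the paper. The paper handles $t=1$ by Lemma~\ref{$gcd(3,N), t=1$} (as you do), $t=2$ by essentially your argument applied to $K_{4x}$ (the printed ``$K_{2x}$'' is a typo), but for $t\ge 3$ it switches to a product construction: write $K_{6xt}=K_{(6x:t)}\cup tK_{6x}$, invoke Theorem~\ref{equip} to get a $C_{3x}$-factorization of $K_{(6x:t)}$ directly, and place a copy of the $t=1$ solution on each $K_{6x}$. Your uniform approach---decompose $K_{2xt}-F$ into $xt-1$ $C_x$-factors plus a $1$-factor via Theorem~\ref{fac}, then weight by $3$ and apply the per-cycle gadgets exactly as in the lemma---avoids the case split altogether, and your observation that $x>3$ keeps you clear of the $K_6-F$, $K_{12}-F$ exceptions for every $t$ is exactly the check required. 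One small remark on your planning paragraph: the parity obstruction you flag for $K_{(x:t)}$ is real, but it is not the route the paper takes either; the paper's equipartite graph is $K_{(6x:t)}$, where $h(u-1)=6x(t-1)$ is always even, so the obstruction does not arise (the reason the paper still needs $t=2$ separately is that Theorem~\ref{equip} with $u=2$ forces $m$ even, while $3x$ is odd). Your approach buys uniformity and avoids invoking Theorem~\ref{equip} at all; the paper's approach for $t\ge 3$ buys a black-box reduction to the already-proved $t=1$ case with almost no additional bookkeeping.
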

\begin{proof} 
The case of $t=1$ is addressed in Lemma \ref{$gcd(3,N), t=1$}.

\

If $t=2$, we have the case ${\hbox{HWP}}(12x;6,3x;1,6x-2)$. 

\

When $x \geq 5$, we know $K_{(2:x)}$ has a resolvable $C_x$ factorization, by Theorem \ref{equip}, for all $x \geq5$. Notice that by Theorem \ref{fac}, $K_{2x}$ can be decomposed into $(2x-1)$ $C_x$-factors, $F_1, F_2, \ldots, F_{2x-1}$. Give each point of $K_{(2:x)}$ weight 3. Then on the $F_1$ factor, on every $x$-cycle, take cycles of each of the four types given in Lemma \ref{$gcd(3,N), t=1$} to obtain four ${3x}$-cycle factors. Then, on the remaining $F_2, \ldots, F_{2x-1}$, on every $x$-cycle, take cycles of each of the three types to create three $3x$-cycles. This produces $(6x-2)$- $3x$ cycle factors. Now on the 1-factor of $K_{2x}$, we let $d_0=0$ and $d_1=1$. Then $d=1$, so by Lemma \ref{Lemma2}, $G[\Delta]$ consists of a 6-cycle factor. As for the 1 factors, we let $d_0=1$, then by Lemma \ref{Lemma2}, $G[\Delta]$ consists of 1 factors.

\

Now suppose $t \geq 3$. $K_{6xt}$  can be considered as $K_{(6x:t)} \cup tK_{6x}$. By Theorem \ref{equip}, there exists a decomposition of $K_{(6x:t)}$  into $C_{3x}$ factors. There also exists a solution to ${\hbox{HWP}}(6x;6,3x;1,3x-2)$ by Lemma \ref{$gcd(3,N), t=1$}.

\end{proof}

\

We now proceed to prove our main Theorem.

\begin{theorem}
\label{Main}
    Let $x$, $t$, and $p$ be integers such that $t \geq 1$, $x \geq 2$ and $x \neq 6p$. A solution to $\hbox{HWP}(v;6,N;1,\beta)$, where $v=6xt$, $N=2x$, and $\beta=3xt-2$, exists if and only if $6|v$ and $N|v$, except possibly when $\gcd(6,N)=1$, or $\gcd(6,N)=2$ and $x=3$.
\end{theorem}

\begin{proof}
    If a solution exists, then by Theorem \ref{Ncond}, $6|v$ and $N|v$. Let $l=\lcm(6,N)$. If $6|v$ and $N|v$, then by Theorem \ref{Burgess}, there exists a solution to $\hbox{HWP}(v;6,N;1,\beta)$ except possibly when

    \begin{itemize}
    \item $\gcd(6,N) \in \{1,2\}$;
    \item 4 does not divide $v/l$;
    \item $v/4l \in \{1, 2\}$;
    \item $v = 16l$ and $\gcd(6, N)$ is odd;
    \item $v = 24l$ and $\gcd(6, N) = 3$.
\end{itemize}

Now, if $6|v$ and $N|v$, then $\gcd(6,N) \in \{1,2,3,6\}$. If $\gcd(6,N)=6$, then this case was solved in \cite{BURDAN}. If $\gcd(6,N)=3$, then $N=3x$ with $x$ odd. There exists a solution to $\hbox{HWP}(6xt;6,3x;1,3xt-2)$ for all odd $x \geq 3$ in Theorem \ref{gcd=3}. Notice that $l=\lcm(6,3x)=6x$, so $\frac{v}{l}=t$. This result addresses the following open cases from Theorem \ref{Burgess}: when $4\not | t$, when $t=4 \ \hbox{or} \ 8$, when $v=96x$, and when $v=144x$ for odd $x > 3$. Thus the case of $x=3$ is still open. If $\gcd(6,N)=2$, then $N=2x$. The existence of a solution to $\hbox{HWP}(6xt;6,2x;1,3xt-2)$ with $x\equiv 2 \ \hbox{or} \ 4 \pmod{6}$ is given in Theorem \ref{1-gcd=2}. The existence of a solution to $\hbox{HWP}(6x;6,2x;1,3x-2)$ with $x\equiv 1 \ \hbox{or} \ 5 \pmod{6}$ is given in Theorem \ref{2-gcd=2}. These results produce solutions for the open case of Theorem \ref{Burgess} when $\gcd(6,N)=2$.
\end{proof}

\

\begin{section}{Conclusions and Future Work}

The results given in this paper almost settle the existence problem for $\hbox{HWP}(v;6,N;1,\beta)$. We are esentially left with the difficult case of $\gcd(M,N)=1$ . It seems a completely different technique should be used for a solution, if it exists. We also believe that it might be possible to generalize the idea of $MRSM$s in order to extend these results and use the algorithm developed in this paper to build these matrices using a computing system such as Magma for $M >6$.

\end{section}

\newpage

\end{document}